\newtheorem{theorem}{Theorem}
\newtheorem{property}[theorem]{Property}
\newtheorem{lemma}[theorem]{Lemma}
\newtheorem{corollary}[theorem]{Corollary}
\newtheorem{example}[theorem]{Example}
\newtheorem{definition}[theorem]{Definition}
\newtheorem{remark}[theorem]{Remark}
\def\di{\displaystyle}
\def\eps{\varepsilon}
\def\({\left(} 
\def\){\right)}
\newcommand{\N}{\mathbb{N}}
\newcommand{\R}{\mathbb{R}}
\newcommand{\RN}{(\mathbb{R}^d)^{N+1}}
\newcommand{\LL}{\mathcal{L}}
\newcommand{\CC}{\mathscr{C}}
\newcommand{\Q}{\boldsymbol{Q}}
\newcommand{\Qa}{\boldsymbol{Q}^{\boldsymbol{U},\alpha}}
\newcommand{\U}{\boldsymbol{U}}
\newcommand{\PP}{\boldsymbol{P}}
\newcommand{\PPa}{\boldsymbol{P}^{\boldsymbol{U},\alpha}}
\newcommand{\T}{\boldsymbol{T}}
\newcommand{\G}{\boldsymbol{G}}
\newcommand{\cDM}{{}_{{\rm c}} D^\alpha_-}
\newcommand{\cDP}{{}_{{\rm c}} D^\alpha_+}
\newcommand{\DM}{D^\alpha_-}
\newcommand{\DP}{D^\alpha_+}
\newcommand{\cDDM}{{}_{{\rm c}} \Delta^\alpha_-}
\newcommand{\cDDP}{{}_{{\rm c}} \Delta^\alpha_+}
\newcommand{\DDM}{\Delta^\alpha_-}
\newcommand{\DDP}{\Delta^\alpha_+}
\newcommand{\fonction}[5]{\begin{array}[t]{lrcl}#1 :&#2 &\longrightarrow &#3\\&#4& \longmapsto &#5 \end{array}}
\newcommand{\fonctionsansdef}[3]{\begin{array}[t]{lrcl}#1 :&#2 &\longrightarrow &#3 \end{array}}
\newcommand{\abs}[1]{\left\vert #1 \right\vert} 
\begin{document}
\title[]{\textbf{Variational integrator for fractional Pontryagin's systems. Existence of a discrete fractional Noether's theorem.}}
\author{Lo\"ic Bourdin}
\address{Laboratoire de Math\'ematiques et de leurs Applications - Pau (LMAP). UMR CNRS 5142. Universit\'e de Pau et des Pays de l'Adour.}
\email{bourdin.l@etud.univ-pau.fr}
\maketitle

\begin{abstract}
Fractional Pontryagin's systems emerge in the study of a class of fractional optimal control problems, see \cite{agra2,agra3,bour7,torr,torr2,jeli} and references therein, but they are not resolvable in most cases. In this paper, we suggest a numerical approach for these fractional systems. Precisely, we construct a variational integrator allowing to preserve at the discrete level their intrinsic variational structure. The variational integrator obtained is then called \textit{shifted discrete fractional Pontryagin's system}. 

In \cite{bour7}, we have provided a solved fractional example in a certain sense. It allows us to test in this paper the convergence of the variational integrator constructed. Finally, we also provide a discrete fractional Noether's theorem giving the existence of an explicit computable discrete constant of motion for shifted discrete fractional Pontryagin's systems admitting a discrete symmetry.
\end{abstract}

\textbf{\textrm{Keywords:}} Discrete optimal control; discrete fractional calculus; discrete Noether's theorem.

\textbf{\textrm{AMS Classification:}} 26A33; 49J15.

\section*{Introduction}
The fractional calculus, \textit{i.e.} the mathematical field dealing with the generalization of the derivative to any real order, plays an increasing role in many varied domains as economy \cite{comt} or probability \cite{levy,stan}. Fractional derivatives also appear in many fields of Physics (see \cite{hilf3}):  wave mechanic \cite{alme}, viscoelasticity \cite{bagl}, thermodynamics \cite{hilf2}, fluid mechanic in heterogeneous media \cite{hilf,neel,neel2}, etc. Recently, a subtopic of the fractional calculus gains importance: it concerns the variational principles on functionals involving fractional derivatives. This leads to the statement of fractional Euler-Lagrange equations, see \cite{agra,bale2,riew}. 

A direct consequence is the emergence of works concerning a particular class of fractional optimal control problems, see \cite{agra2,agra3,bour7,torr,torr2,jeli} and references therein. Using a Lagrange multiplier technique or not, authors obtain with a calculus of variations a necessary condition for the existence of an optimal control. This condition is commonly given as the existence of a solution of a system of fractional differential equations called \textit{fractional Pontryagin's system}. 

Hence, the explicit computation of a potential optimal control, from the above necessary condition, needs the resolution of a fractional Pontryagin's system which is a main drawback. Indeed, solving a fractional differential equation is in general very difficult. Moreover, a fractional Pontryagin's system involves left and right fractional derivatives which is an additional obstruction. \\

In this paper, we then develop a \textit{numerical approach}. Let us remind that there exist many works concerning the statement of discrete operators approaching the fractional derivatives (see \cite{dubo,galu,oldh}) and then concerning numerical schemes for fractional differential equations (see \cite{diet,liu,meer,meer2}). In particular, one can find studies concerning the discretization of fractional Euler-Lagrange equations \cite{agra4,bour} and fractional Pontryagin's systems \cite{agra2,agra3,bale,deft,jeli}. 

Nevertheless, a fractional Pontryagin's system admits an intrinsic variational structure: its solutions correspond to the critical points of a cost functional. Moreover, this variational structure induces strong constraints on the qualitative behaviour of the solutions and it seems then important to preserve it at the discrete level. A \textit{variational integrator} is a numerical scheme preserving the variational structure of a system at the discrete level. We refer to Section \ref{section2} for more details concerning the construction of a variational integrator and let us remind that the variational integrators are well-developed in \cite{lubi,mars} for classical Euler-Lagrange equations and in \cite{bour} for fractional ones. In this chapter, we construct a variational integrator for fractional Pontryagin's systems and it is called shifted discrete fractional Pontryagin's system. \\

In \cite{bour7}, we have suggested a deviously way in order to get informations on the solutions of a not solvable fractional Pontryagin's system. Indeed, we have stated a fractional Noether's theorem giving an explicit constant of motion for fractional Pontryagin's systems admitting a symmetry. We refer to \cite{bour7} for more details and we remind that this result is based on a preliminary result proved by Torres and Frederico in \cite{torr,torr2}. In this paper, following the strategy of the continuous case, we introduce the notion of a discrete symmetry for shifted discrete fractional Pontryagin's systems and we finally provide a discrete fractional Noether's theorem giving an explicit computable constant of motion. \\

The paper is organized as follows. Section \ref{section1} is devoted to a reminder on the fractional calculus and on the emergence of fractional Pontryagin's systems in the study of a class of fractional optimal control problems. In Section \ref{section2}, after a reminder concerning discrete fractional derivatives, we focus on the construction of a variational integrator for fractional Pontryagin's systems. We make some numerical tests in Section \ref{section3}. Especially, let us remind that a fractional example is solved in \cite{bour7} in a certain sense. Consequently, we can test the convergence of the variational integrator both in the classical and strict fractional cases. Finally, Section \ref{section4} is devoted to the statement of a discrete fractional Noether's theorem. Technical proofs of Lemmas are provided in Appendix \ref{appA}.

\parindent 0pt

\section{Reminder about fractional Pontryagin's system}\label{section1}
In this section, we first make a reminder about fractional calculus in Section \ref{section11}. Then, in Section \ref{section12}, we remind how fractional Pontryagin's systems emerge from the study of a class of fractional optimal control problems. Let us introduce the following notations available in the whole paper. Let $a < b$ be two reals, let $d$, $m \in \N^*$ denote two dimensions and let $\Vert \cdot \Vert$ be the euclidean norm of $\R^d$ and $\R^m$.

\subsection{Fractional operators of Riemann-Liouville and Caputo}\label{section11}
The fractional calculus concerns the extension of the usual notion of derivative from non-negative integer orders to any real order. Since 1695, numerous notions of fractional derivatives emerge over the year, see \cite{kilb,podl,samk}. In this paper, we only use the notions of fractional integrals and derivatives in the sense of Riemann-Liouville (1847) and Caputo (1967) whose definitions are recalled in this section. We refer to \cite{kilb,podl,samk} for more details. \\

Let $g \in \CC^0 ([a,b],\R^d)$ and $\alpha > 0$. The left (resp. right) fractional integral in the sense of Riemann-Liouville with inferior limit $a$ (resp. superior limit $b$) of order $ \alpha $ of $g$ is defined by:
\begin{equation}
\forall t \in ]a,b], \; I^{\alpha}_- g (t) := \dfrac{1}{\Gamma (\alpha)} \di \int_a^t (t-y)^{\alpha -1} g(y) \; dy
\end{equation}
respectively:
\begin{equation}
\forall t \in [a,b[, \; I^{\alpha}_+ g (t) := \dfrac{1}{\Gamma (\alpha)} \di \int_t^b (y-t)^{\alpha -1} g(y) \; dy,
\end{equation}
where $\Gamma$ denotes the Euler's Gamma function. For $\alpha =0$, let $I^0_- g = I^0_+ g = g$. \\

Now, let us consider $0 < \alpha \leq 1$. The left (resp. right) fractional derivative in the sense of Riemann-Liouville with inferior limit $a$ (resp. superior limit $b$) of order $\alpha $ of $g$ is then given by:
\begin{equation}
\forall t \in ]a,b], \; \DM g(t) := \dfrac{d}{dt} \big( I^{1-\alpha}_- g \big) (t) \quad \Big( \text{resp.} \quad \forall t \in [a,b[, \; \DP g(t) := -\dfrac{d}{dt} \big( I^{1-\alpha}_+ g \big) (t) \Big),
\end{equation}
provided that the right side terms are defined. \\

In the Riemann-Liouville sense, the strict fractional derivative of a constant is not zero. Caputo then suggests the following definition. For $0 < \alpha \leq 1$, the left (resp. right) fractional derivative in the sense of Caputo with inferior limit $a$ (resp. superior limit $b$) of order $\alpha $ of $g$ is given by:
\begin{equation}\label{eq11-1}
\forall t \in ]a,b], \; \cDM g(t) := \DM \big( g - g(a) \big) (t) \quad \Big( \text{resp.} \quad \forall t \in [a,b[, \; \cDP g(t) := \DP \big( g - g(b) \big) (t) \Big),
\end{equation}
provided that the right side terms are defined. Let us note that if $g(a)=0$ (resp. $g(b)=0$), then $\cDM g = \DM g$ (resp. $\cDP g =  \DP g$).\\

In the \textit{classical case} $\alpha = 1$, the fractional derivatives of Riemann-Liouville and Caputo both coincide with the classical derivative. Precisely, modulo a $(-1)$ term in the right case, we have $D^1_- = {}_{\text{c}}D^1_- = - D^1_+ = - {}_{\text{c}}D^1_+ = d/dt$. 

\subsection{Reminder about a class of fractional optimal control problems}\label{section12}
From now and for all the rest of the paper, we consider $0 < \alpha \leq 1$ and $A \in \R^d$. Let us denote $[\alpha]$ the floor of $\alpha$. \\

In this section, let us remind the following definitions concerning the class of fractional optimal control problems studied in \cite{bour7}:
\begin{itemize}
\item The elements denoted $u \in \CC^0 ([a,b],\R^m)$ are called \textit{controls};
\item Let $f$ be a $\CC^2$ function of the form:
\begin{equation}
\fonction{f}{\R^d \times \R^m \times [a,b]}{\R^d}{(x,v,t)}{f(x,v,t).}
\end{equation}
It is commonly called the \textit{constraint function}. We assume that $f$ satisfies the following Lipschitz type condition. There exists $M \geq 0$ such that:
\begin{equation}\tag{$f_x$ lip}\label{condf}
\forall (x_1,x_2,v,t) \in (\R^d)^2 \times \R^m \times [a,b], \; \Vert f(x_1,v,t) - f(x_2,v,t) \Vert \leq M \Vert x_1 - x_2 \Vert ;
\end{equation}
\item For any control $u$, let $q^{u,\alpha} \in \CC^{[\alpha]} ([a,b],\R^d)$ denote the unique global solution of the following fractional Cauchy problem:
\begin{equation}\tag{CP${}^\alpha_q$}\label{eqcpq}
 \left\lbrace \begin{array}{l}
         \cDM q =f(q,u,t)\\
	     q(a) = A.
        \end{array}
\right.
\end{equation}
$q^{u,\alpha}$ is commonly called the \textit{state variable} associated to $u$. Its existence and its uniqueness are provided in \cite{bour7} from Condition \eqref{condf};
\item Finally, the fractional optimal control problem studied in \cite{bour7} is the problem of optimization of the following \textit{cost functional}:
\begin{equation}
\fonction{\LL^\alpha}{\CC^0 ([a,b],\R^m )}{\R}{u}{\di \int_{a}^{b} L ( q^{u,\alpha},u,t ) \; dt ,} 
\end{equation}
where $L$ is a \textit{Lagrangian}, \textit{i.e.} a $\CC^2$ application of the form:
\begin{equation}
\fonction{L}{\R^d \times \R^m \times [a,b]}{\R}{(x,v,t)}{L(x,v,t).}
\end{equation}
\end{itemize}

A control optimizing $\LL^\alpha$ is called \textit{optimal control}. A necessary condition for a control $u$ to be optimal is to be a \textit{critical point} of $\LL^\alpha$, \textit{i.e.} to satisfy:
\begin{equation}
\forall \bar{u} \in  \CC^0 ([a,b],\R^m ), \;  D\LL^\alpha(u)(\bar{u}) := \lim\limits_{\eps \rightarrow 0} \dfrac{\LL^\alpha(u+\eps \bar{u})-\LL^\alpha(u)}{\eps} = 0. 
\end{equation}
In \cite{bour7}, we then focused on the characterization of the critical points of $\LL^\alpha$. Firstly, we proved with an usual calculus of variations the following Lemma \ref{lem1} giving explicitly the value of the G\^ateaux derivative of $\LL^\alpha$:
\begin{lemma}\label{lem1}
Let $u$, $\bar{u} \in \CC^0 ([a,b],\R^m )$. Then, the following equality holds:
\begin{equation}
D\LL^\alpha(u)(\bar{u}) = \di \int_a^b \dfrac{\partial L}{\partial x} (q^{u,\alpha},u,t) \cdot \bar{q} + \dfrac{\partial L}{\partial v} (q^{u,\alpha},u,t) \cdot \bar{u} \; dt,
\end{equation}
where $\bar{q} \in \CC^{[\alpha]} ( [a,b], \R^d )$ is the unique global solution of the following linearised Cauchy problem:
\begin{equation}\label{eqlcpq}\tag{LCP${}^\alpha_{\bar{q}}$}
 \left\lbrace \begin{array}{l}
 		\cDM \bar{q} = \dfrac{\partial f}{\partial x} (q^{u,\alpha},u,t) \times \bar{q} + \dfrac{\partial f}{\partial v} (q^{u,\alpha},u,t) \times \bar{u} \\[10pt]
 		\bar{q}(a) = 0.
        \end{array}
\right. 
\end{equation}
\end{lemma}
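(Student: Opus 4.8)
The plan is to compute the Gâteaux derivative $D\LL^\alpha(u)(\bar u)$ directly from the definition, exchanging the limit in $\eps$ with the time integral, and then to identify the derivative of the map $\eps \mapsto q^{u+\eps\bar u,\alpha}$ at $\eps=0$ as the solution $\bar q$ of the linearised Cauchy problem \eqref{eqlcpq}. Concretely, first I would fix $u,\bar u \in \CC^0([a,b],\R^m)$ and write, for $\eps$ near $0$, $q_\eps := q^{u+\eps\bar u,\alpha}$, so that $q_0 = q^{u,\alpha}$. Differentiating the cost functional formally under the integral sign and using the chain rule on $L$ gives
\begin{equation*}
D\LL^\alpha(u)(\bar u) = \di\int_a^b \dfrac{\partial L}{\partial x}(q^{u,\alpha},u,t)\cdot \Big(\lim_{\eps\to 0}\dfrac{q_\eps - q_0}{\eps}\Big) + \dfrac{\partial L}{\partial v}(q^{u,\alpha},u,t)\cdot \bar u \; dt,
\end{equation*}
so the whole proof reduces to showing that the quotient $(q_\eps - q_0)/\eps$ converges (say uniformly on $[a,b]$, or at least in a sense strong enough to pass to the limit in the integral) to the solution $\bar q$ of \eqref{eqlcpq}.

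To establish that, I would use the integral reformulation of the Caputo Cauchy problem: since $q(a)=A$ is fixed (independent of $\eps$) and $\cDM q = \DM(q - A)$, applying $I^\alpha_-$ turns \eqref{eqcpq} into the Volterra equation $q_\eps(t) = A + I^\alpha_-\big[f(q_\eps,u+\eps\bar u,t)\big](t)$. Subtracting the equation for $q_0$, dividing by $\eps$, and setting $z_\eps := (q_\eps - q_0)/\eps$, one gets
\begin{equation*}
z_\eps(t) = I^\alpha_-\Big[\tfrac{1}{\eps}\big(f(q_\eps,u+\eps\bar u,t) - f(q_0,u,t)\big)\Big](t).
\end{equation*}
A first-order Taylor expansion of $f$ in its first two arguments, together with the Lipschitz hypothesis \eqref{condf} (and the $\CC^2$ regularity of $f$, which controls the remainder), lets me write the bracket as $\frac{\partial f}{\partial x}(q_0,u,t)\times z_\eps + \frac{\partial f}{\partial v}(q_0,u,t)\times \bar u + o(1)$ as $\eps\to 0$, uniformly in $t$. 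So $z_\eps$ satisfies a perturbed version of the linearised Volterra equation, and a Gronwall-type argument for fractional integral inequalities (the fractional Gronwall lemma) gives a uniform bound on $\|z_\eps\|_\infty$ and then convergence $z_\eps \to \bar q$, where $\bar q$ is the unique solution of the fixed-point equation $\bar q = I^\alpha_-\big[\frac{\partial f}{\partial x}(q^{u,\alpha},u,t)\times\bar q + \frac{\partial f}{\partial v}(q^{u,\alpha},u,t)\times\bar u\big]$ — which is exactly the integral form of \eqref{eqlcpq} with $\bar q(a)=0$. The existence and uniqueness of this solution $\bar q$ follow from the same Picard/fractional-Gronwall machinery that underlies \eqref{eqcpq}, since the linearised problem has a globally Lipschitz (indeed affine) right-hand side in $\bar q$ thanks to the boundedness of $\partial f/\partial x$ along the fixed curve $(q^{u,\alpha},u)$.

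The main obstacle I anticipate is the justification of the interchange of the limit $\eps\to 0$ with the integral $\int_a^b\,dt$, i.e. upgrading the pointwise differentiability of $\eps\mapsto q_\eps(t)$ to something uniform enough (uniform convergence of $z_\eps$, or dominated convergence) to legitimately differentiate $\LL^\alpha$ under the integral sign. This is where the fractional Gronwall estimate does the real work: it must be applied twice — once to bound $\|z_\eps\|_\infty$ uniformly in $\eps$ (to get the needed domination / equicontinuity), and once to the difference $z_\eps - \bar q$ to force the limit. A secondary technicality is handling the Caputo-versus-Riemann-Liouville bookkeeping at the endpoint $a$, but since both $q_\eps$ and $q_0$ share the same initial value $A$, the constant drops out of every difference and this causes no trouble. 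The remaining computations — the Taylor expansion of $f$ and of $L$, and the chain rule — are routine and I would relegate the detailed estimates to the appendix in the spirit of the paper.
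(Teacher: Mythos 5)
Your proposal is correct and takes essentially the same route as the paper: the paper defers the proof of Lemma \ref{lem1} to \cite{bour7}, which it explicitly describes as using exactly the tools you invoke (calculus of variations, a Gronwall-type estimate, the Cauchy--Lipschitz theorem, and stability under perturbations, i.e.\ convergence of the quotient $(q^{u+\eps\bar u,\alpha}-q^{u,\alpha})/\eps$ to the solution $\bar q$ of \eqref{eqlcpq}). Moreover, the discrete analogue actually proved in this paper (Lemmas \ref{lemappAa} and \ref{lemappAb} in Appendix \ref{appA}, feeding the proof of Lemma \ref{lem2}) has precisely the structure you outline: a first-order stability bound on the perturbed state, a second-order bound on $q^{u+\eps\bar u,\alpha}-q^{u,\alpha}-\eps\bar q$ obtained from a Taylor expansion of $f$ with the Lipschitz condition \eqref{condf}, and a final Taylor expansion of $L$ to justify passing to the limit in the difference quotient of the cost functional.
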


This last result not leading to a characterization of the critical points of $\LL^\alpha$, we then introduced the following elements stemming from the Lagrange multiplier technique:
\begin{itemize}
\item Let $H$ be the following application
\begin{equation}
\fonction{H}{\R^d \times \R^m \times \R^d \times [a,b]}{\R}{(x,v,w,t)}{L(x,v,t)+w \cdot f(x,v,t).}
\end{equation}
$H$ is commonly called the \textit{Hamiltonian} associated to the Lagrangian $L$ and the constraint function $f$;
\item For any control $u$, let $p^{u,\alpha} \in \CC^{[\alpha]} ( [a,b], \R^d )$ denote the unique global solution of the following fractional Cauchy problem:
\begin{equation}\label{eqcpp}\tag{CP${}^\alpha_p$}
 \left\lbrace \begin{array}{l}
         \cDP p = \dfrac{\partial H}{\partial x}(q^{u,\alpha},u,p,t) = \dfrac{\partial L}{\partial x}(q^{u,\alpha},u,t) + \left( \dfrac{\partial f}{\partial x}(q^{u,\alpha},u,t)\right)^T \times p \\[10pt]
	     p(b) = 0.
        \end{array}
\right.
\end{equation}
$p^{u,\alpha}$ is commonly called the \textit{adjoint variable} associated to $u$. Its existence and its uniqueness are also provided in \cite{bour7}. Let us note that $\cDP p^{u,\alpha} = \DP p^{u,\alpha}$ since $p^{u,\alpha} (b) = 0$.
\end{itemize}
Consequently, for any control $u$, the couple $(q^{u,\alpha},p^{u,\alpha})$ is solution of the following \textit{fractional Hamiltonian system}:
\begin{equation}\tag{HS${}^\alpha$}\label{eqhs}
 \left\lbrace \begin{array}{l}
 		 \cDM q = \dfrac{\partial H}{\partial w} (q,u,p,t) \\[10pt]
 		\DP p = \dfrac{\partial H}{\partial x} (q,u,p,t).
        \end{array}
\right.
\end{equation}
Finally, the introduction of these last elements allowed us to prove the following theorem:
\begin{theorem}\label{thmfinal}
Let $u \in \CC^0 ([a,b],\R^m)$. Then, $u$ is a critical point of $\LL^{\alpha}$ if and only if $(q^{u,\alpha},u,p^{u,\alpha})$ is solution of the following \textit{fractional stationary equation}:
\begin{equation}\tag{SE${}^\alpha$}\label{eqse}
\dfrac{\partial H}{\partial v} (q,u,p,t) = 0.
\end{equation}
\end{theorem}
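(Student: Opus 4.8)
The plan is to start from the explicit formula for the G\^ateaux derivative provided by Lemma~\ref{lem1} and to use the adjoint variable $p^{u,\alpha}$, together with a fractional integration by parts, in order to remove the auxiliary variable $\bar{q}$ from that formula. Throughout, write $q := q^{u,\alpha}$, $p := p^{u,\alpha}$, and let $\bar{u} \in \CC^0([a,b],\R^m)$ with associated $\bar{q} \in \CC^{[\alpha]}([a,b],\R^d)$ the solution of \eqref{eqlcpq}. First I would establish (or recall from \cite{kilb,samk}) the fractional integration by parts formula
\[
\int_a^b \big( \cDM \bar{q} \big)(t) \cdot p(t) \; dt = \int_a^b \bar{q}(t) \cdot \big( \DP p \big)(t) \; dt ,
\]
which holds here precisely because $\bar{q}(a) = 0$ and $p(b) = 0$ force the boundary contributions coming from the underlying Riemann--Liouville formula to vanish; note also that $\cDM \bar{q} = \DM \bar{q}$ since $\bar{q}(a)=0$, and $\cDP p = \DP p$ as already observed after \eqref{eqcpp}.

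Then I would substitute the two differential equations into this identity. The equation in \eqref{eqlcpq} turns the left-hand side into
\[
\int_a^b \Big( \dfrac{\partial f}{\partial x}(q,u,t) \times \bar{q} \Big) \cdot p \; dt + \int_a^b \Big( \dfrac{\partial f}{\partial v}(q,u,t) \times \bar{u} \Big) \cdot p \; dt ,
\]
while the equation in \eqref{eqcpp} turns the right-hand side into
\[
\int_a^b \bar{q} \cdot \dfrac{\partial L}{\partial x}(q,u,t) \; dt + \int_a^b \bar{q} \cdot \Big( \big( \dfrac{\partial f}{\partial x}(q,u,t) \big)^T \times p \Big) \; dt .
\]
Using the adjunction $(M \times \bar{q}) \cdot p = \bar{q} \cdot (M^T \times p)$ for $M = \partial f/\partial x$, the two terms carrying $\partial f/\partial x$ cancel, and there remains
\[
\int_a^b \Big( \dfrac{\partial f}{\partial v}(q,u,t) \times \bar{u} \Big) \cdot p \; dt = \int_a^b \bar{q} \cdot \dfrac{\partial L}{\partial x}(q,u,t) \; dt .
\]
Substituting this into the formula of Lemma~\ref{lem1} eliminates the $\bar{q}$--dependent term and, by the definition of $H$, yields
\[
D\LL^\alpha(u)(\bar{u}) = \int_a^b \Big( \dfrac{\partial L}{\partial v}(q,u,t) + \big( \dfrac{\partial f}{\partial v}(q,u,t) \big)^T \times p \Big) \cdot \bar{u} \; dt = \int_a^b \dfrac{\partial H}{\partial v}(q,u,p,t) \cdot \bar{u} \; dt .
\]

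To conclude, $u$ is a critical point of $\LL^\alpha$ exactly when the last integral vanishes for every $\bar{u} \in \CC^0([a,b],\R^m)$; since $t \mapsto \dfrac{\partial H}{\partial v}(q(t),u(t),p(t),t)$ is continuous on $[a,b]$, the fundamental lemma of the calculus of variations gives that this is equivalent to $\dfrac{\partial H}{\partial v}(q,u,p,t) = 0$ on $[a,b]$, that is, to \eqref{eqse}. The step I expect to be the main obstacle is the rigorous justification of the fractional integration by parts displayed above: one must check that $\cDM \bar{q}$ and $\DP p$ are genuinely integrable against $p$ and $\bar{q}$ up to the endpoints --- fractional derivatives may be singular at $a$ or $b$ --- and that all boundary terms produced by the Riemann--Liouville integration by parts really do vanish under $\bar{q}(a)=0$ and $p(b)=0$; the remaining manipulations are only bookkeeping with the definitions of $H$, \eqref{eqlcpq} and \eqref{eqcpp}.
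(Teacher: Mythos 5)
Your proposal is correct and follows essentially the same scheme the paper uses: the paper only recalls Theorem~\ref{thmfinal} from \cite{bour7}, but its proof of the discrete analogue (Theorem~\ref{thmfinald}) proceeds exactly as you do --- start from the G\^ateaux derivative formula of Lemma~\ref{lem1} (resp.\ Lemma~\ref{lem2}), bring in the adjoint variable, apply the (fractional) integration by parts, use the linearised Cauchy problem to eliminate $\bar{q}$, and conclude that $D\LL^\alpha(u)(\bar{u})=\int_a^b \frac{\partial H}{\partial v}(q,u,p,t)\cdot\bar{u}\,dt$. Your flagged concern about the boundary terms is the right one, and it is settled as you suspect: the term at $t=a$ vanishes since $\bar{q}(a)=0$, and the term at $t=b$ involves $I^{1-\alpha}_+p(b)$, which vanishes for continuous $p$ (with $p(b)=0$ being what gives $\cDP p=\DP p$).
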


From Theorem \ref{thmfinal}, we retrieved in \cite{bour7} the following result leading to the fractional Pontryagin's system:
\begin{corollary}\label{corfinal}
$\LL^\alpha$ has a critical point in $\CC^0 ([a,b],\R^m)$ if and only if there exists $(q,u,p) \in \CC^{[\alpha]} ([a,b],\R^d) \times \CC^0 ([a,b],\R^m) \times \CC^{[\alpha]} ([a,b],\R^d)$ solution of the following \textit{fractional Pontryagin's system}:
\begin{equation}\tag{PS${}^\alpha $}\label{eqps}
 \left\lbrace \begin{array}{l}
 		\cDM q = \dfrac{\partial H}{\partial w} (q,u,p,t) \\[10pt]
 		\DP p = \dfrac{\partial H}{\partial x} (q,u,p,t) \\[10pt]
	    \dfrac{\partial H}{\partial v} (q,u,p,t) = 0 \\[10pt]
	    \big( q(a),p(b) \big) = ( A,0 ).
        \end{array}
\right.
\end{equation}
In the affirmative case, $u$ is a critical point of $\LL^\alpha$ and we have $(q,p) = (q^{u,\alpha},p^{u,\alpha})$.
\end{corollary}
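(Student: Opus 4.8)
The plan is to deduce this corollary directly from Theorem~\ref{thmfinal}, using only the uniqueness of the solutions of the Cauchy problems \eqref{eqcpq} and \eqref{eqcpp} (recalled from \cite{bour7}) together with the elementary identity $\partial H/\partial w = f$, which holds because $H(x,v,w,t) = L(x,v,t) + w\cdot f(x,v,t)$. With this identity, the first line of \eqref{eqps} is exactly the dynamics of \eqref{eqcpq}, and the second line — once the boundary condition $p(b)=0$ is invoked — is exactly the dynamics of \eqref{eqcpp}.

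For the direct implication I would start from a critical point $u$ of $\LL^\alpha$. Theorem~\ref{thmfinal} then says that $(q^{u,\alpha},u,p^{u,\alpha})$ satisfies \eqref{eqse}. Taking $(q,p) := (q^{u,\alpha},p^{u,\alpha})$, the pair $(q,p)$ solves \eqref{eqcpq} and \eqref{eqcpp} by construction; reading these two Cauchy problems through $\partial H/\partial w = f$, and noting that $p(b)=0$ forces $\cDP p = \DP p$, one checks that the four lines of \eqref{eqps} all hold.

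For the converse I would take a solution $(q,u,p)$ of \eqref{eqps} and first identify its components. The first and fourth lines show that $q$ solves \eqref{eqcpq}, so $q = q^{u,\alpha}$ by uniqueness; the fourth line gives $p(b)=0$, hence $\DP p = \cDP p$, and the second line then shows that $p$ solves \eqref{eqcpp} (with $q^{u,\alpha}$ in place of $q$), so $p = p^{u,\alpha}$ by uniqueness. The third line now becomes $\partial H/\partial v(q^{u,\alpha},u,p^{u,\alpha},t)=0$, i.e.\ \eqref{eqse}, and Theorem~\ref{thmfinal} yields that $u$ is a critical point of $\LL^\alpha$; along the way we have obtained $(q,p) = (q^{u,\alpha},p^{u,\alpha})$, which is the last assertion.

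I do not anticipate a genuine obstacle here: the argument is essentially a bookkeeping exercise built on Theorem~\ref{thmfinal}. The only point demanding care is the repeated passage between the Caputo and Riemann-Liouville \emph{right} fractional derivatives, which is legitimate precisely because every solution in sight satisfies $p(b)=0$; all the real analytic input (existence, uniqueness, the variational characterization) is already contained in Theorem~\ref{thmfinal} and in \cite{bour7}.
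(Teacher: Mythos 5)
Your argument is correct and is exactly the route the paper intends: the corollary is deduced from Theorem \ref{thmfinal} by identifying the first two lines of \eqref{eqps} (via $\partial H/\partial w = f$ and $\cDP p = \DP p$ when $p(b)=0$) with the Cauchy problems \eqref{eqcpq} and \eqref{eqcpp}, and invoking their uniqueness to get $(q,p)=(q^{u,\alpha},p^{u,\alpha})$. No gap; your care with the Caputo/Riemann--Liouville right derivatives is precisely the only delicate point.
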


Let us note that the fractional Pontryagin's system \eqref{eqps} is made up of the fractional Hamiltonian system \eqref{eqhs}, the fractional stationary equation \eqref{eqse} and initial and final conditions. \\

In practice, see Examples in \cite{bour7}, we use more Corollary \ref{corfinal} than Theorem \ref{thmfinal}. Let us remind that Corollary \ref{corfinal} was already provided in \cite{agra2,agra3,torr,torr2,jeli} and references therein without Condition \eqref{condf}. However, this result is proved, in each of these papers, using a Lagrange multiplier technique requiring the introduction of an augmented functional. In \cite{bour7}, Condition \eqref{condf} allowed us to give a complete proof of this result using only classical mathematical tools adapted to the fractional case: \textit{calculus of variations}, \textit{Gronwall's Lemma}, \textit{Cauchy-Lipschitz Theorem} and \textit{stability under perturbations of differential equations}. We refer to \cite{bour7} for more details and for a discussion on the subject. \\

As we have seen in this section, fractional Pontryagin's systems emerge from the study of a class of fractional optimal control problems. They have a variational structure in the sense that they are obtained with a calculus of variations on functionals and there resolutions give explicitly the critical points of these functionals. Our aim in this paper is to provide them numerical schemes preserving this strong characteristic at the discrete level. \\

Moreover, let us make the following important remark: since a fractional Pontryagin's system emerges from a fractional optimal control problem, the main unknown is then the control $u$. Consequently, the convergence of the numerical scheme constructed in Section \ref{section2} is going to be considered only with respect to $u$.

\section{Variational integrator for fractional Pontryagin's systems}\label{section2}
In general, fractional differential equations are very difficult to solve. One can find some solved examples in \cite{kilb,podl,samk} using Mittag-Leffler functions, Fourier and Laplace transforms. Additionally, fractional Pontryagin's systems, as fractional Euler-Lagrange equations provided in \cite{agra}, present an asymmetry in the sense that left and right fractional derivatives are involved. It is an additional drawback in order to solve explicitly the most of fractional Pontryagin's systems. In this section, we then develop a numerical approach treating them. \\

Nevertheless, as we have seen in Section \ref{section12}, a fractional Pontryagin's system admits an intrinsic variational structure: its solutions correspond to the critical points of a functional. In this paper, we want to construct a numerical scheme for fractional Pontryagin's systems preserving at the discrete level this strong property. \\ 

A \textit{variational integrator} is a numerical scheme preserving the variational structure of a system at the discrete level. Precisely, let us consider a differential system coming from a variational principle (\textit{i.e.} \textit{its solutions correspond to the critical points of a functional}). Then, a variational integrator is the numerical scheme constructed as follows:
\begin{itemize}
\item firstly, one have to define a discrete version of the functional;
\item secondly one have to form a discrete variational principle on it.
\end{itemize}
Hence, a numerical scheme is obtained and it is called variational integrator. It preserves the variational structure at the discrete level in the sense that \textit{its discrete solutions correspond to the discrete critical points of the discrete functional}. Let us remind that variational integrators are well-developed for classical Euler-Lagrange equations in \cite{lubi,mars} and let us remind that we have developed a variational integrator for fractional Euler-Lagrange equations in \cite{bour}. In this section, we are going to construct a variational integrator for fractional Pontryagin's systems. \\

Let us introduce the following notations available in the whole paper. Let $N \in \N^*$, $h=(b-a)/N$ denote the step size of discretization and $\T = (t_k)_{k=0,\ldots,N} = (a+kh)_{k=0,\ldots,N}$ be the classical partition of the interval $[a,b]$. Let us assume that $N$ is sufficiently large in order to satisfy the following condition:
\begin{equation}\label{condh}\tag{cond $h$}
2h^\alpha M < 1,
\end{equation}
where $M$ is the Lipschitz coefficient of the constraint function $f$, see Condition \eqref{condf}.

\subsection{Reminder about discrete fractional derivatives of Gr\"unwald-Letnikov}\label{section21}
For the sequel, we need the introduction of discrete operators approximating the fractional derivatives of Riemann-Liouville and Caputo. As in \cite{deft,dubo}, let us define $\DDM$ and $\DDP$ the following discrete analogous of $\DM$ and $\DP$ respectively:
\begin{equation}
\fonction{\DDM}{( \R^d ) ^{N+1}}{( \R^d ) ^{N}}{\G}{\left( \dfrac{1}{h^{\alpha}} \di \sum_{r=0}^{k} \alpha_r G_{k-r} \right) _{k=1,\ldots,N}}
\end{equation}
and
\begin{equation}
\fonction{\DDP}{( \R^d ) ^{N+1}}{( \R^d ) ^{N}}{\G}{\left( \dfrac{1}{h^{\alpha}} \displaystyle \sum_{r=0}^{N-k} \alpha_r G_{k+r} \right) _{k=0,\ldots,N-1},}
\end{equation}
where the elements $(\alpha_r)_{r \in \N}$ are defined by $\alpha_0 := 1$ and 
\begin{equation}\label{eqalphar}
\forall r \in \N^*, \; \alpha_r := \dfrac{(-\alpha)(1-\alpha)\ldots(r-1-\alpha)}{r!}.
\end{equation}
These discrete fractional operators are approximations of the continuous ones. Indeed, passing to the limit $h \to 0$, these discrete operators correspond to the definition of the fractional derivatives of Gr\"unwald-Letnikov (1867) coinciding with the Riemann-Liouville's ones. We refer to \cite{podl} for more details. \\

Finally, according to Equation \eqref{eq11-1}, we define $\cDDM$ and $\cDDP$ the following discrete analogous of $\cDM$ and $\cDP$ respectively:
\begin{equation}
\fonction{\cDDM}{( \R^d ) ^{N+1}}{( \R^d ) ^{N}}{\G}{\Big( \big( \DDM (\G - G_0) \big)_k \Big) _{k=1,\ldots,N}}
\end{equation}
and
\begin{equation}
\fonction{\cDDP}{( \R^d ) ^{N+1}}{( \R^d ) ^{N}}{\G}{\Big( \big( \DDP (\G - G_N) \big)_k \Big) _{k=0,\ldots,N-1}.}
\end{equation}

Let us note that we preserve some continuous properties at the discrete level. In particular, $G_0 = 0$ (resp. $G_N = 0$) implies $\cDDM \G = \DDM \G$ (resp. $\cDDP \G = \DDP \G$). Additionally, in the classical case $ \alpha = 1$, these discrete fractional derivatives coincide with the usual backward and forward Euler's approximations of $d/dt$ with a $(-1)$ term in the right case:
\begin{equation}
\forall k=1,\ldots,N , \; (\Delta^1_- \G )_k= ({}_{{\rm c}} \Delta^1_- \G)_k = \dfrac{G_k - G_{k-1}}{h}
\end{equation}
and
\begin{equation}
\forall k=0,\ldots,N-1 , \; (\Delta^1_+ \G )_k= ({}_{{\rm c}} \Delta^1_+ \G)_k = \dfrac{G_k - G_{k+1}}{h}.
\end{equation}

\subsection{Results concerning the discrete fractional derivatives}\label{section22}
In this section, we prove two important properties preserved from the continuous level to the discrete one. For the sequel, we first need the introduction of the following \textit{shift operators}:
\begin{equation}
\fonction{\sigma}{(\R^n)^{N+1}}{(\R^n)^{N}}{\G}{\big( G_{k+1} \big)_{k=0,\ldots,N-1}} \quad \text{and} \quad \fonction{\sigma^{-1}}{(\R^n)^{N+1}}{(\R^n)^{N}}{\G}{ \big( G_{k-1} \big)_{k=1,\ldots,N},} 
\end{equation}
where the integer $n$ is $d$ or $m$. \\

The first property is the following: considering the quadrature formula of Gauss as approximation of the integral, we can prove the following discrete fractional integration by parts:

\begin{property}[Discrete fractional integration by parts]\label{lemdfibp}
Let $\G^1$, $\G^2 \in \RN$ satisfying $G^1_0 = G^2_N = 0$, then we have:
\begin{equation}\tag{DFIBP}\label{eqdfibp}
h \di \sum_{k=1}^N (\cDDM \G^1)_k \cdot \sigma^{-1} (\G^2)_k = h \di \sum_{k=0}^{N-1} \sigma (\G^1)_k \cdot (\cDDP \G^2)_k.
\end{equation}
\end{property}

\begin{proof} 
Since $G^1_0 =  G^2_N = 0$, we have $\cDDM \G^1 = \DDM \G^1 $ and $\cDDP \G^2 = \DDP \G^2 $. Then, we have:
\begin{equation}
h \di \sum_{k=1}^N (\DDM \G^1)_k \cdot \sigma^{-1} (\G^2)_k = h \di \sum_{k=0}^{N-1} (\DDM \G^1)_{k+1} \cdot G^2_k = h^{1-\alpha} \di \sum_{k=0}^{N-1} \sum_{r=0}^{k+1} \alpha_r G^1_{k+1-r} \cdot G^2_k.
\end{equation}
Finally, since $G^1_0 =  G^2_N = 0$, the following equalities hold:
\begin{equation}
\begin{array}{rccl}
& h \di \sum_{k=1}^N (\DDM \G^1)_k \cdot \sigma^{-1} (\G^2)_k & = & h^{1-\alpha} \di \sum_{k=0}^{N-1} \sum_{r=0}^{k} \alpha_r G^1_{k+1-r} \cdot G^2_k \\
= & h^{1-\alpha} \di \sum_{r=0}^{N-1} \sum_{k=r}^{N-1} \alpha_r G^1_{k+1-r} \cdot G^2_k & = & h^{1-\alpha} \di \sum_{r=0}^{N-1} \sum_{k=0}^{N-r-1} \alpha_r G^1_{k+1} \cdot G^2_{k+r} \\
 = & h^{1-\alpha} \di \sum_{k=0}^{N-1} G^1_{k+1} \cdot \left( \sum_{r=0}^{N-k-1} \alpha_r G^2_{k+r} \right) & = & h^{1-\alpha} \di \sum_{k=0}^{N-1} G^1_{k+1} \cdot \left( \sum_{r=0}^{N-k} \alpha_r G^2_{k+r} \right) ,
\end{array}
\end{equation}
which concludes the proof.
\end{proof}

This last result is very useful for discrete calculus of variations involving discrete fractional derivatives, see proof of Theorem \ref{thmfinald}. Secondly let us prove the following discrete version of the fractional Cauchy-Lipschitz Theorem proved in \cite{bour7}:

\begin{theorem}[Discrete fractional Cauchy-Lipschitz theorem]\label{thmdfcl}
Let $F \in \CC^0(\R^d \times [a,b], \R^d)$ satisfying the following Lipschitz type condition:
\begin{equation}\label{eq22}
\exists K \in \R, \; \forall (x_1,x_2,t) \in (\R^d)^2 \times [a,b], \; \Vert F(x_1,t)-F(x_2,t) \Vert \leq K \Vert x_1 - x_2 \Vert,
\end{equation}
with $h^\alpha K < 1$. Then, the following discrete fractional Cauchy problem:
\begin{equation}\label{eq21}
 \left\lbrace \begin{array}{l}
 		\cDDM \Q = F(\Q,\T) \\
 		Q_0 = A
        \end{array}
\right.
\end{equation}
has an unique solution $\Q \in \RN$.
\end{theorem}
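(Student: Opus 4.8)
The plan is to rewrite the discrete fractional Cauchy problem as a fixed point equation and apply the Banach fixed point theorem on the finite-dimensional space $\RN$ equipped with a suitable norm. Since the unknown $\Q = (Q_0,\dots,Q_N)$ has its first component fixed by $Q_0 = A$, the true unknown lives in $(\R^d)^N$, and solving \eqref{eq21} amounts to finding the remaining components $Q_1,\dots,Q_N$.

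First I would unfold the definition of $\cDDM$. For $k = 1,\dots,N$, the equation $(\cDDM\Q)_k = F(Q_k,t_k)$ reads
\begin{equation}
\dfrac{1}{h^\alpha} \di \sum_{r=0}^{k} \alpha_r (Q_{k-r} - Q_0) = F(Q_k,t_k).
\end{equation}
Isolating the $r=0$ term (recall $\alpha_0 = 1$) gives
\begin{equation}
Q_k = Q_0 + h^\alpha F(Q_k,t_k) - \di \sum_{r=1}^{k} \alpha_r (Q_{k-r} - Q_0).
\end{equation}
This exhibits $Q_k$ as a function of $Q_k$ itself (through $F$) and of the strictly earlier components $Q_0,\dots,Q_{k-1}$. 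So the natural strategy is a \emph{step-by-step} construction: assuming $Q_0,\dots,Q_{k-1}$ are already determined, the above identity is a fixed point equation for the single vector $Q_k \in \R^d$, namely $Q_k = \Phi_k(Q_k)$ where $\Phi_k(x) := Q_0 + h^\alpha F(x,t_k) - \sum_{r=1}^{k}\alpha_r(Q_{k-r}-Q_0)$. By the Lipschitz hypothesis \eqref{eq22}, $\Phi_k$ is a contraction with ratio $h^\alpha K < 1$, hence admits a unique fixed point $Q_k$. Iterating this for $k = 1,\dots,N$ produces a unique $\Q \in \RN$ solving \eqref{eq21}.

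The key steps, in order, are: (i) reduce the system to the per-component recursion above by extracting the $r=0$ term; (ii) for each fixed $k$, and each choice of the already-constructed $Q_0,\dots,Q_{k-1}$, verify that $\Phi_k : \R^d \to \R^d$ is well defined and $h^\alpha K$-Lipschitz — this is immediate from \eqref{eq22} since the sum over $r \geq 1$ does not depend on $x$; (iii) invoke the Banach fixed point theorem on the complete space $(\R^d, \Vert\cdot\Vert)$ to get existence and uniqueness of $Q_k$; (iv) conclude by finite induction on $k$ that the whole sequence $\Q$ exists and is unique. One should also remark that the condition $h^\alpha K < 1$ is exactly what makes each $\Phi_k$ a strict contraction, mirroring the role of \eqref{condh} in the continuous theory.

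The main obstacle is essentially bookkeeping rather than analysis: one must be careful that the contraction argument is applied component by component (uniqueness of the full sequence then follows by induction, not from a single global contraction), and that the convolution tail $\sum_{r=1}^{k}\alpha_r(Q_{k-r}-Q_0)$ is genuinely a constant in the fixed point problem for $Q_k$. There is no need to control the coefficients $\alpha_r$ quantitatively, nor to appeal to any discrete Gronwall estimate, precisely because the $r=0$ term has been separated out; this is what keeps the proof short compared to its continuous counterpart in \cite{bour7}.
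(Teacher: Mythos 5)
Your proposal is correct and follows essentially the same route as the paper: isolate the $r=0$ term of $\cDDM$ so that \eqref{eq21} becomes, for each $k$, a fixed point equation for $Q_k$ alone with the tail depending only on the already-constructed $Q_0,\ldots,Q_{k-1}$, apply the contraction fixed point theorem (the ratio being $h^\alpha K<1$ by \eqref{eq22}), and conclude by finite induction. The only cosmetic difference is that you keep the vanishing $r=k$ term $\alpha_k(Q_0-Q_0)$ inside the sum, whereas the paper stops the sum at $r=k-1$; this changes nothing.
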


\begin{proof}
We are going to construct by induction the solution $\Q$ of \eqref{eq21}. Our method uses the classical fix point theorem concerning the contraction mappings. Indeed, let us choose $Q_0 = A$. Then, for any $k=1,\ldots,N$, $Q_k$ has to satisfy:
\begin{equation}
Q_k = h^\alpha F(Q_k,t_k) + Q_0 - \di \sum_{r=1}^{k-1} \alpha_r (Q_{k-r}-Q_0).
\end{equation}
However, for any $k=1,\ldots,N$, the application $ h^\alpha F(\cdot,t_k) + Q_0 - \sum_{r=1}^{k-1} \alpha_r (Q_{k-r}-Q_0) $ is a contraction and consequently admits an unique fix point. Hence, we first construct $Q_1$, then $Q_2$, etc. By induction, we construct a solution $\Q$ of \eqref{eq21} and such a construction assures its uniqueness.
\end{proof}

\subsection{First step of construction}\label{section23}
As said in introduction of this section, in order to complete the first step of construction of a variational integrator, we have to provide a discrete version of $\LL^\alpha$. In this way, let us give the following definition:
\begin{itemize}
\item The elements $\U \in (\R^m)^{N+1}$ are called the \textit{discrete controls};
\item For any discrete control $\U$, let $\Qa \in \RN$ denote the unique solution of the following discrete Cauchy problem:
\begin{equation}\tag{CP${}^\alpha_{\Q}$}\label{eqdcpq}
 \left\lbrace \begin{array}{l}
         \cDDM \Q =f(\Q,\U,\T)\\
	     Q_0 = A \in \R^d .
        \end{array}
\right.
\end{equation}
$\Qa$ is called the \textit{discrete state variable} associated to $\U$. Its existence and its uniqueness are provided by Theorem \ref{thmdfcl} and Conditions \eqref{condf} and \eqref{condh};
\item Finally, we define the following \textit{discrete cost functional}:
\begin{equation}
\fonction{\LL^\alpha_h}{(\R^m)^{N+1}}{\R}{\U}{h \di \sum_{k=1}^N L(Q^{\U,\alpha}_k,U_k,t_k).}
\end{equation}
\end{itemize}
Hence, we have provided a discrete version $\LL^\alpha_h$ to the cost functional $\LL^\alpha$. Now, the second step of the construction of the variational integrator is to characterize the \textit{discrete critical points} of the discrete cost functional $\LL^\alpha_h$ with the help of a discrete calculus of variations. \\

Let us make the following remark: such a characterization implies to be a necessary condition for the existence of an optimizer of the discrete cost functional $\LL^\alpha_h$. In fact, in this section, we have defined an actual discrete fractional optimal control problem.

\subsection{Second step of construction}\label{section24}
The second step of construction of a variational integrator consists in forming a discrete variational principle on $\LL^\alpha_h$. Precisely, we focus on the characterization of its \textit{discrete critical points}, \textit{i.e.} the elements $\U \in (\R^m)^{N+1}$ satisfying:
\begin{equation}
\forall \bar{\U} \in (\R^m)^{N+1}, \; D\LL^\alpha_h (\U)(\bar{\U}) := \lim\limits_{\eps \to 0} \dfrac{\LL^\alpha_h (\U+\eps \bar{\U})-\LL^\alpha_h (\U)}{\eps} = 0.
\end{equation}
With a discrete calculus of variations, we obtain the following discrete version of Lemma \ref{lem1} giving explicitly the value of the G\^ateaux derivative of $\LL^\alpha_h$.

\begin{lemma}\label{lem2}
Let $\U$, $\bar{\U} \in (\R^m)^{N+1}$. Then, the following equality holds:
\begin{equation}
D\LL^\alpha_h (\U)(\bar{\U}) = h \di \sum_{k=1}^N  \left[ \dfrac{\partial L}{\partial x} (Q^{\U,\alpha}_k, U_k,t_k) \cdot \bar{Q}_k + \dfrac{\partial L}{\partial v} (Q^{\U,\alpha}_k, U_k,t_k) \cdot \bar{U}_k \right],
\end{equation}
where $\bar{\Q} \in (\R^d )^{N+1}$ is the unique solution of the following linearised discrete fractional Cauchy problem:
\begin{equation}\label{eqldcpq}\tag{LCP${}^\alpha_{\bar{\Q}}$}
 \left\lbrace \begin{array}{l}
 		\cDDM \bar{\Q} = \dfrac{\partial f}{\partial x} (\Qa,\U,\T) \times \bar{\Q} + \dfrac{\partial f}{\partial v} (\Qa,\U,\T) \times \bar{\U} \\[10pt]
 		\bar{Q}_0 = 0.
        \end{array}
\right. 
\end{equation}
\end{lemma}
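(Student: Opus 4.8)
The plan is to mimic exactly the continuous computation underlying Lemma \ref{lem1}, replacing the differentiation of a parametrized Cauchy problem by a discrete analogue. First I would fix $\U$, $\bar\U \in (\R^m)^{N+1}$ and consider, for $\eps$ near $0$, the perturbed discrete control $\U + \eps\bar\U$ together with its discrete state variable $\Q^{\U+\eps\bar\U,\alpha}$, whose existence and uniqueness are granted by Theorem \ref{thmdfcl} under Conditions \eqref{condf} and \eqref{condh}. The key preliminary step is to show that the map $\eps \mapsto \Q^{\U+\eps\bar\U,\alpha}$ is differentiable at $\eps = 0$ and that its derivative $\bar\Q := \frac{d}{d\eps}\big|_{\eps=0}\Q^{\U+\eps\bar\U,\alpha} \in (\R^d)^{N+1}$ solves the linearised discrete Cauchy problem \eqref{eqldcpq}. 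Since $\Q^{\U,\alpha}$ is built componentwise by the fixed-point induction of Theorem \ref{thmdfcl} ($Q_0 = A$, and each $Q_k$ obtained from $Q_0,\dots,Q_{k-1}$ via a contraction depending smoothly on the data), a finite induction on $k$ using the $\CC^2$ regularity of $f$ and the implicit function theorem at each stage gives both the differentiability and the recursion $\cDDM\bar\Q = \frac{\partial f}{\partial x}(\Qa,\U,\T)\times\bar\Q + \frac{\partial f}{\partial v}(\Qa,\U,\T)\times\bar\U$ with $\bar Q_0 = 0$; uniqueness of $\bar\Q$ also follows from Theorem \ref{thmdfcl} applied to this linear system (its Lipschitz constant in $x$ is controlled by $\sup_k \|\frac{\partial f}{\partial x}(\Qa,\U,\T)_k\|$, which one may absorb after possibly noting it satisfies the relevant smallness condition, or simply invoke linearity to get uniqueness directly).

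Once $\bar\Q$ is identified, I would write
\begin{equation}
\LL^\alpha_h(\U+\eps\bar\U) = h\sum_{k=1}^N L\big(Q^{\U+\eps\bar\U,\alpha}_k, U_k + \eps\bar U_k, t_k\big),
\end{equation}
differentiate this finite sum at $\eps = 0$ using the chain rule (legitimate because each summand is $\CC^1$ in $\eps$, being a composition of the $\CC^2$ map $L$ with the differentiable maps just constructed), and obtain immediately
\begin{equation}
D\LL^\alpha_h(\U)(\bar\U) = h\sum_{k=1}^N\left[\frac{\partial L}{\partial x}(Q^{\U,\alpha}_k, U_k, t_k)\cdot\bar Q_k + \frac{\partial L}{\partial v}(Q^{\U,\alpha}_k, U_k, t_k)\cdot\bar U_k\right],
\end{equation}
which is exactly the claimed formula. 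Note that, unlike the continuous Lemma \ref{lem1}, no integration by parts is needed here: the statement keeps $\bar\Q$ explicit rather than eliminating it, so the discrete fractional integration by parts of Property \ref{lemdfibp} is deferred to the proof of Theorem \ref{thmfinald}.

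The main obstacle is the first step: justifying rigorously that $\eps \mapsto \Q^{\U+\eps\bar\U,\alpha}$ is differentiable and that its derivative satisfies \eqref{eqldcpq}. In the continuous setting this is a "stability under perturbations of differential equations" argument; in the discrete setting it is cleaner, since everything reduces to a finite recursion of fixed points, but one must carefully track how the fixed point $Q_k$ depends on $\eps$ through both the explicit $\eps\bar U_k$ term and the already-perturbed $Q_0,\dots,Q_{k-1}$. The condition $h^\alpha K < 1$ (here $K = M$, via \eqref{condh}) guarantees at each stage that the relevant map $x \mapsto h^\alpha f(x, U_k + \eps\bar U_k, t_k) + Q_0 - \sum_{r=1}^{k-1}\alpha_r(Q_{k-r}-Q_0)$ is a contraction with contraction constant bounded away from $1$ uniformly in $\eps$, so that the implicit function theorem applies uniformly and the induction closes. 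Everything else — the chain rule on a finite sum, the final bookkeeping — is routine, and I would relegate the detailed induction to Appendix \ref{appA} as the paper does for its other technical lemmas.
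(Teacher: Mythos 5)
Your proposal is correct, but it follows a genuinely different route from the paper's own proof. You establish that $\eps \mapsto \Q^{\U+\eps\bar{\U},\alpha}$ is differentiable at $\eps=0$ by a finite induction on $k$, applying the implicit function theorem at each stage of the fixed-point recursion (the uniform bound $h^\alpha M<1$ from \eqref{condf}--\eqref{condh} makes $\mathrm{Id}-h^\alpha\frac{\partial f}{\partial x}$ invertible and the contraction constant uniform in $\eps$), you identify the derivative with the solution $\bar{\Q}$ of \eqref{eqldcpq}, and you conclude by the chain rule applied to the finite sum defining $\LL^\alpha_h$. The paper never differentiates the discrete state with respect to $\eps$: it proves two quantitative stability estimates by finite induction (Lemmas \ref{lemappAa} and \ref{lemappAb} in Appendix \ref{appA}), namely $\Vert Q^{\U+\eps\bar{\U},\alpha}_k-Q^{\U,\alpha}_k\Vert\le C_1\vert\eps\vert$ and $\Vert Q^{\U+\eps\bar{\U},\alpha}_k-Q^{\U,\alpha}_k-\eps\bar{Q}_k\Vert\le C\eps^2$, obtained from Taylor expansions with explicit remainder together with Conditions \eqref{condf} and \eqref{condh}, and then expands $L$ to second order to show directly that the difference quotient of $\LL^\alpha_h$ converges to the claimed expression. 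Your argument is shorter and conceptually cleaner, exploiting the fact that in the discrete setting the state is a finite composition of smooth implicit functions, so a soft differentiability argument plus the chain rule suffices (and it only needs $h^\alpha M<1$ rather than the $2h^\alpha M<1$ used in the paper's second-order estimate); the paper's argument is more elementary in its ingredients (no implicit function theorem), mirrors the ``stability under perturbations'' strategy of the continuous case in \cite{bour7}, and produces explicit $O(\eps^2)$ constants. One small inaccuracy in your commentary: the continuous Lemma \ref{lem1} likewise keeps $\bar{q}$ explicit, so no integration by parts is needed there either --- in both settings the (discrete) fractional integration by parts of Property \ref{lemdfibp} only enters the proof of the stationarity characterization (Theorem \ref{thmfinald}); this does not affect the validity of your proof.
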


\begin{proof}
See Appendix \ref{appAc}.
\end{proof}

This last result does not lead to a characterization of the critical points of $\LL^\alpha_h$ yet. As in the continuous case, we then introduce the notion of discrete adjoint variable: for any discrete control $\U$, let $\PP^{\U,\alpha} \in \RN$ denote the unique solution of the following shifted discrete Cauchy problem:
\begin{equation}\tag{$\sigma$CP${}^\alpha_{\PP}$}
 \left\lbrace \begin{array}{rcl}
        \cDDP \PP & = & \dfrac{\partial H}{\partial x} \big( \sigma (\Q^{\U,\alpha}),\sigma (\U),\PP,\sigma (\T) \big) \\[10pt]
        & = & \dfrac{\partial L}{\partial x} \big( \sigma (\Q^{\U,\alpha}),\sigma (\U),\sigma (\T) \big) + \left(  \dfrac{\partial f}{\partial x} \big( \sigma (\Q^{\U,\alpha}),\sigma (\U),\sigma (\T) \big) \right)^T \times \PP \\[10pt]
	    P_N & = & 0.
        \end{array}
\right.
\end{equation}
$\PP^{\U,\alpha}$ is called the \textit{discrete adjoint variable} associated to $\U$. Its existence and its uniqueness are provided by the analogous of Theorem \ref{thmdfcl} for right discrete fractional derivative and by Conditions \eqref{condf} and \eqref{condh}. Let us note that, since $\PP^{\U,\alpha}_N = 0$, we can write $\cDDP \PP^{\U,\alpha} = \DDP \PP^{\U,\alpha}$. \\

The presence of shift operators in the definition of the discrete adjoint variable is the consequence of the change of sums in the discrete fractional integration by parts \eqref{eqdfibp} (see Property \ref{lemdfibp}). We refer to the proof of Theorem \ref{thmfinald} for more details. We also refer to Remark \ref{rem0} for a discussion about the presence of the shift operators. \\

Finally, let us note that for any discrete control $\U$, the couple $(\Qa,\PPa)$ is solution of the following \textit{shifted discrete fractional Hamiltonian system}:
\begin{equation}\tag{$\sigma$HS${}^\alpha_h$}\label{eqhamsystd}
 \left\lbrace \begin{array}{l}
   \cDDM \Q = \dfrac{\partial H}{\partial w} \big(  \Q,\U,\sigma^{-1}(\PP),\T \big) \\[10pt]
        \DDP \PP = \dfrac{\partial H}{\partial x} \big( \sigma (\Q),\sigma (\U),\PP,\sigma (\T) \big).
	   \end{array}
\right.
\end{equation}
Finally, the introduction of this last discrete element allows us to prove the following theorem:
\begin{theorem}\label{thmfinald}
Let $\U \in (\R^m)^{N+1}$. Then, $\U$ is a discrete critical point of $\LL^{\alpha}_h$ if and only if $(\Qa,\U,\PPa)$ is solution of the following \textit{shifted discrete fractional stationary equation}:
\begin{equation}\tag{$\sigma$SE${}^\alpha_h$}\label{eqstatfd}
\dfrac{\partial H}{\partial v} \big( \Q,\U,\sigma^{-1} (\PP),\T \big) = 0.
\end{equation}
\end{theorem}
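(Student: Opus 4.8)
The plan is to follow the route of the continuous case (Theorem~\ref{thmfinal}): start from the explicit G\^ateaux derivative provided by Lemma~\ref{lem2}, use the discrete adjoint variable $\PPa$ together with the discrete fractional integration by parts~\eqref{eqdfibp} to absorb the term carrying $\bar{\Q}$, and recognise $\partial H/\partial v$ in what remains.

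First I would fix $\U,\bar{\U}\in(\R^m)^{N+1}$ and write, by Lemma~\ref{lem2},
\[
D\LL^\alpha_h(\U)(\bar{\U})=h\di\sum_{k=1}^N\frac{\partial L}{\partial x}(Q^{\U,\alpha}_k,U_k,t_k)\cdot\bar{Q}_k+h\di\sum_{k=1}^N\frac{\partial L}{\partial v}(Q^{\U,\alpha}_k,U_k,t_k)\cdot\bar{U}_k,
\]
where $\bar{\Q}$ solves~\eqref{eqldcpq}. Since $\bar{Q}_0=0$ and $P^{\U,\alpha}_N=0$, Property~\ref{lemdfibp} applies with $\G^1=\bar{\Q}$ and $\G^2=\PPa$ and gives
\[
h\di\sum_{k=1}^N(\cDDM\bar{\Q})_k\cdot\sigma^{-1}(\PPa)_k=h\di\sum_{k=0}^{N-1}\sigma(\bar{\Q})_k\cdot(\cDDP\PPa)_k.
\]
Into the left-hand side I would substitute the linearised equation~\eqref{eqldcpq} for $\cDDM\bar{\Q}$, into the right-hand side the defining relation of $\PPa$ (recall $\cDDP\PPa=\DDP\PPa$), and then shift the index by one so that both sums run over $k=1,\ldots,N$. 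Using the defining property of the transpose, namely $\xi\cdot(B^{T}\times\eta)=(B\times\xi)\cdot\eta$, to move the transposed Jacobians, the terms containing $\frac{\partial f}{\partial x}(Q^{\U,\alpha}_k,U_k,t_k)\times\bar{Q}_k$ occur on both sides and cancel, leaving
\[
h\di\sum_{k=1}^N\frac{\partial L}{\partial x}(Q^{\U,\alpha}_k,U_k,t_k)\cdot\bar{Q}_k=h\di\sum_{k=1}^N\left(\left(\frac{\partial f}{\partial v}(Q^{\U,\alpha}_k,U_k,t_k)\right)^{T}\times P^{\U,\alpha}_{k-1}\right)\cdot\bar{U}_k.
\]

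Substituting this identity back into the expression of $D\LL^\alpha_h(\U)(\bar{\U})$ removes the $\bar{\Q}$-contribution completely and leaves a sum tested only against $\bar{\U}$; since $H(x,v,w,t)=L(x,v,t)+w\cdot f(x,v,t)$ yields $\frac{\partial H}{\partial v}(x,v,w,t)=\frac{\partial L}{\partial v}(x,v,t)+\left(\frac{\partial f}{\partial v}(x,v,t)\right)^{T}\times w$, and since $P^{\U,\alpha}_{k-1}=\sigma^{-1}(\PPa)_k$, this becomes
\[
D\LL^\alpha_h(\U)(\bar{\U})=h\di\sum_{k=1}^N\frac{\partial H}{\partial v}\big(Q^{\U,\alpha}_k,U_k,P^{\U,\alpha}_{k-1},t_k\big)\cdot\bar{U}_k.
\]
Finally, because $\bar{U}_1,\ldots,\bar{U}_N$ range freely over $\R^m$ (and $\bar{U}_0$ does not appear), a discrete fundamental lemma of the calculus of variations --- obtained by letting $\bar{\U}$ be supported at a single index $k\in\{1,\ldots,N\}$ and run through the canonical basis of $\R^m$ --- shows that $D\LL^\alpha_h(\U)(\bar{\U})=0$ for every $\bar{\U}$ if and only if $\frac{\partial H}{\partial v}\big(\Qa,\U,\sigma^{-1}(\PPa),\T\big)=0$, which is exactly~\eqref{eqstatfd}; this proves both implications of the equivalence.

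I expect the only genuine difficulty to be bookkeeping: matching the summation ranges ($k=1,\ldots,N$ against $k=0,\ldots,N-1$) and tracking the shifts produced by $\sigma$ and $\sigma^{-1}$, so that the adjoint is evaluated precisely at the shifted arguments $\big(\sigma(\Qa),\sigma(\U),\PPa,\sigma(\T)\big)$ and the $\frac{\partial f}{\partial x}$-terms align and cancel. This is exactly where the shift operators built into the definition of $\PPa$ --- forced, as noted after~\eqref{eqdfibp}, by the reordering of the double sum in the discrete integration by parts --- become indispensable; once the indices are aligned, the algebraic identification of $\partial H/\partial v$ is immediate.
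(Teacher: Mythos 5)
Your proposal is correct and follows essentially the same route as the paper's proof: Lemma~\ref{lem2}, the defining (shifted) adjoint equation for $\PPa$, the discrete fractional integration by parts of Property~\ref{lemdfibp} applied with $\bar{Q}_0=P^{\U,\alpha}_N=0$, substitution of \eqref{eqldcpq}, cancellation of the $\partial f/\partial x$ terms, and identification of $\partial H/\partial v$. The only cosmetic differences are that the paper adds and subtracts the adjoint term before integrating by parts rather than substituting into both sides of the integration-by-parts identity, and that you make explicit the final discrete fundamental lemma step which the paper leaves implicit.
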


\begin{proof}
Let $\U$, $\bar{\U} \in (\R^m)^{N+1}$. From Lemma \ref{lem2}, we have:
\begin{multline}
h^{-1} D\LL^\alpha_h (\U)(\bar{\U}) = \di \sum_{k=1}^N  \left[ \dfrac{\partial L}{\partial x} (Q^{\U,\alpha}_k, U_k,t_k) + \left(  \dfrac{\partial f}{\partial x} (Q^{\U,\alpha}_k, U_k,t_k) \right)^T \times \sigma^{-1} (\PPa)_k \right] \cdot \bar{Q}_k \\ - \di \sum_{k=1}^N  \left( \left(  \dfrac{\partial f}{\partial x} (Q^{\U,\alpha}_k, U_k,t_k) \right)^T \times \sigma^{-1} (\PPa)_k \right)\cdot \bar{Q}_k + \di \sum_{k=1}^N \dfrac{\partial L}{\partial v} (Q^{\U,\alpha}_k, U_k,t_k) \cdot \bar{U}_k .
\end{multline}
Then:
\begin{multline}
h^{-1} D\LL^\alpha_h (\U)(\bar{\U}) = \di \sum_{k=0}^{N-1} (\cDDP \PPa)_k \cdot \sigma (\bar{\Q})_k - \di \sum_{k=1}^N  \left( \dfrac{\partial f}{\partial x} (Q^{\U,\alpha}_k, U_k,t_k) \times \bar{Q}_k \right) \cdot \sigma^{-1} (\PPa)_k \\ + \di \sum_{k=1}^N \dfrac{\partial L}{\partial v} (Q^{\U,\alpha}_k, U_k,t_k) \cdot \bar{U}_k .
\end{multline}
From the discrete fractional integration by parts \eqref{eqdfibp} (see Property \ref{lemdfibp}), we obtain:
\begin{multline}
h^{-1} D\LL^\alpha_h (\U)(\bar{\U}) = \di \sum_{k=1}^{N} \left( (\cDDM \bar{\Q})_k - \dfrac{\partial f}{\partial x} (Q^{\U,\alpha}_k, U_k,t_k) \times \bar{Q}_k \right) \cdot \sigma^{-1} (\PPa)_k \\ + \di \sum_{k=1}^N \dfrac{\partial L}{\partial v} (Q^{\U,\alpha}_k, U_k,t_k) \cdot \bar{U}_k .
\end{multline}
Since $\bar{\Q}$ is solution of \eqref{eqldcpq}, we have:
\begin{eqnarray*}
h^{-1} D\LL^\alpha_h (\U)(\bar{\U}) & = & \di \sum_{k=1}^{N} \left( \dfrac{\partial f}{\partial v} (Q^{\U,\alpha}_k, U_k,t_k) \times \bar{U}_k \right) \cdot \sigma^{-1} (\PPa)_k + \di \sum_{k=1}^N \dfrac{\partial L}{\partial v} (Q^{\U,\alpha}_k, U_k,t_k) \cdot \bar{U}_k \\
& = & \di \sum_{k=1}^{N} \left( \left( \dfrac{\partial f}{\partial v} (Q^{\U,\alpha}_k, U_k,t_k) \right)^T \times \sigma^{-1} (\PPa)_k + \dfrac{\partial L}{\partial v} (Q^{\U,\alpha}_k, U_k,t_k) \right) \cdot \bar{U}_k.
\end{eqnarray*}
Finally:
\begin{equation}
D\LL^\alpha_h(\U)(\bar{\U}) = h \di \sum_{k=1}^{N} \dfrac{\partial H}{\partial v}  \big(Q^{\U,\alpha}_k,U_k,\sigma^{-1}(\PPa)_k,t_k \big) \cdot \bar{U}_k.
\end{equation}
The proof is completed.
\end{proof}

Finally, from Theorem \ref{thmfinald}, we obtain the following result leading to the variational integrator constructed:
\begin{corollary}\label{corfinald}
$\LL^\alpha_h$ has a discrete critical point if and only if there exists $(\Q,\U,\PP) \in (\R^d)^{N+1} \times (\R^m)^{N+1} \times (\R^d)^{N+1}$ solution of the following \textit{shifted discrete fractional Pontryagin's system}:
\begin{equation}\tag{$\sigma$PS${}^\alpha_h$}\label{eqpontsystfd}
 \left\lbrace \begin{array}{l}
 		\cDDM \Q = \dfrac{\partial H}{\partial w} \big(\Q,\U,\sigma^{-1}(\PP),\T\big) \\[10pt]
 		\DDP \PP = \dfrac{\partial H}{\partial x} \big(\sigma(\Q),\sigma(\U),\PP,\sigma(\T)\big) \\[10pt]
	    \dfrac{\partial H}{\partial v} \big( \Q,\U,\sigma^{-1}(\PP),\T \big) = 0 \\[10pt]
	    ( Q_0,P_N ) = ( A,0 ).
        \end{array}
\right.
\end{equation}
In this case, $\U$ is a discrete critical point of $\LL^\alpha_h$ and we have $(\Q,\PP) = (\Qa,\PPa)$.
\end{corollary}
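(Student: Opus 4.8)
The plan is to deduce the corollary directly from Theorem~\ref{thmfinald}, exactly as Corollary~\ref{corfinal} was deduced from Theorem~\ref{thmfinal} in the continuous setting; the only work is to match the four lines of \eqref{eqpontsystfd} with the definitions of $\Qa$ and $\PPa$ and with the shifted discrete stationary equation \eqref{eqstatfd}. The observation used throughout is that, since $H(x,v,w,t) = L(x,v,t) + w \cdot f(x,v,t)$, one has $\partial H/\partial w(x,v,w,t) = f(x,v,t)$; hence the first line of \eqref{eqpontsystfd} together with $Q_0 = A$ is precisely the discrete Cauchy problem \eqref{eqdcpq}.

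For the implication ($\Rightarrow$), I would assume that $\U$ is a discrete critical point of $\LL^\alpha_h$ and set $\Q := \Qa$ and $\PP := \PPa$. By the very definition of $\Qa$, the pair $(\Q,\U)$ solves \eqref{eqdcpq}, which yields the first line of \eqref{eqpontsystfd} and the condition $Q_0 = A$. By the definition of $\PPa$, together with the remark that $\cDDP \PPa = \DDP \PPa$ since the last component of $\PPa$ vanishes, the second line of \eqref{eqpontsystfd} and the condition $P_N = 0$ hold. Finally, Theorem~\ref{thmfinald} asserts that $(\Qa,\U,\PPa)$ satisfies \eqref{eqstatfd}, which is exactly the third line of \eqref{eqpontsystfd}. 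Hence $(\Q,\U,\PP)$ solves \eqref{eqpontsystfd}.

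For the converse ($\Leftarrow$), I would start from a triple $(\Q,\U,\PP)$ solving \eqref{eqpontsystfd} and identify $\Q$ and $\PP$ by uniqueness. Rewriting the first line of \eqref{eqpontsystfd} via $\partial H/\partial w = f$ and using $Q_0 = A$ shows that $\Q$ solves \eqref{eqdcpq}; by Theorem~\ref{thmdfcl} (applicable thanks to Conditions~\eqref{condf} and \eqref{condh}) this discrete Cauchy problem has a unique solution, so $\Q = \Qa$. Substituting $\Q = \Qa$ into the second line of \eqref{eqpontsystfd} and using $P_N = 0$ shows that $\PP$ solves the shifted discrete Cauchy problem defining $\PPa$; by the right-sided analogue of Theorem~\ref{thmdfcl} this solution is also unique, so $\PP = \PPa$. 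The third line of \eqref{eqpontsystfd} then reads $\partial H/\partial v(\Qa,\U,\sigma^{-1}(\PPa),\T) = 0$, i.e.\ $(\Qa,\U,\PPa)$ satisfies \eqref{eqstatfd}, and Theorem~\ref{thmfinald} gives that $\U$ is a discrete critical point of $\LL^\alpha_h$, with $(\Q,\PP) = (\Qa,\PPa)$ as just shown.

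There is no serious obstacle here: the argument is essentially bookkeeping built on the two uniqueness statements (the discrete Cauchy--Lipschitz theorem and its right-sided variant) and on Theorem~\ref{thmfinald}. The only points requiring a little care are keeping track of the shift operators — so that the adjoint equation is seen to depend on $\Q$ only through $\sigma(\Q)$, hence is fully determined once $\Q = \Qa$ — and invoking the right-sided version of Theorem~\ref{thmdfcl} for the uniqueness of $\PPa$, which is exactly why Condition~\eqref{condh} is stated symmetrically in terms of $h^\alpha M$.
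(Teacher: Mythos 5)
Your proposal is correct and follows the same route the paper intends: the corollary is deduced directly from Theorem~\ref{thmfinald}, with the identifications $\Q = \Qa$ and $\PP = \PPa$ coming from the definitions of the discrete state and adjoint variables in one direction, and from the uniqueness given by Theorem~\ref{thmdfcl} and its right-sided analogue (under Conditions \eqref{condf} and \eqref{condh}) in the other. The only cosmetic imprecision is your closing remark tying Condition \eqref{condh} to the adjoint equation's uniqueness: what that step needs is merely $h^\alpha M<1$, which \eqref{condh} implies; the factor $2$ in \eqref{condh} is there for the stability estimates of Appendix~\ref{appA}, not for this corollary.
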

Let us note that \eqref{eqpontsystfd} is made up of the shifted discrete Hamiltonian system \eqref{eqhamsystd}, the shifted stationary equation \eqref{eqstatfd} and initial and final conditions. \\

Hence, we have constructed the variational integrator \eqref{eqpontsystfd} for the fractional Pontryagin's system \eqref{eqps}. It is then a numerical scheme for \eqref{eqps} preserving its variational structure in the sense that the discrete solutions $\U$ obtained correspond to the discrete critical points of the discrete version $\LL^\alpha_h$ of $\LL^\alpha$.

\begin{remark}\label{rem0}
Let us note that the variational integrator \eqref{eqpontsystfd} does not correspond with a direct discretization of \eqref{eqps} as it is done in \cite{deft}. There is an emergence of shift operators caused by the conservation at the discrete level of the variational structure. However, it is proved that the use of shifted numerical schemes allows to obtain more stability for some fractional differential equations, see \cite{meer2,meer}. 
\end{remark}

\begin{remark}\label{rem1}
Let us remind the following remark: since a fractional Pontryagin's system emerges from a fractional optimal control problem, the main unknown is then the control $u$. Consequently, the convergence of the variational integrator \eqref{eqpontsystfd} is going to be considered only with respect to $u$. Let us note that the value of $U_0$ does not take place in the variational integrator \eqref{eqpontsystfd}: it is a free value. Nevertheless, this is totally coherent with the fact that this value does not take place neither in the definition of $\LL^\alpha_h$. Hence, in the following examples in Section \ref{section3}, the error between an exact solution $u$ of \eqref{eqps} and a numerical solution $\U$ obtained with \eqref{eqpontsystfd} is going to be evaluated on $\Vert u(t_k) -U_k \Vert$ for $k \in \{ 1,\ldots,N \}$ only.
\end{remark}

\subsection{Link with the discrete fractional Euler-Lagrange equation}\label{section25}
Let us take the constraint function $f(x,v,t) = v$ satisfying \eqref{condf}. In this case, applying Corollary \ref{corfinald}, we know that there exists a critical point of $\LL^\alpha_h$ if and only if there exists a solution $(\Q,\U,\PP) \in (\R^d)^{N+1} \times (\R^m)^{N+1} \times (\R^d)^{N+1}$ of the shifted discrete fractional Pontryagin's system \eqref{eqpontsystfd} here given by:
\begin{equation}
 \left\lbrace \begin{array}{l}
 		\cDDM \Q = \U \\[10pt]
 		\DDP \PP = \dfrac{\partial L}{\partial x} \big(\sigma(\Q),\sigma(\U),\sigma(\T)\big) \\[10pt]
	    \dfrac{\partial L}{\partial v} ( \Q,\U,\T ) + \sigma^{-1} (\PP) = 0 \\[10pt]
	    ( Q_0,P_N ) = ( A,0 ).
        \end{array}
\right.
\end{equation}
In the affirmative case, it implies that $\Q$ is a discrete solution of the following \textit{discrete fractional Euler-Lagrange equation}:
\begin{equation}\tag{EL${}^\alpha_h$}
 \dfrac{\partial L}{\partial x} ( \Q,\cDDM \Q,\T ) + \DDP \left(  \dfrac{\partial L}{\partial v} ( \Q,\cDDM \Q,\T ) \right) = 0.
\end{equation}
Finally, according to our works in \cite{bour}, we then obtain that $\Q$ is a critical point of the following \textit{discrete fractional Lagrangian functional}:
\begin{equation}
\Q \longrightarrow h \di \sum_{k=1}^N L \big(Q_k,(\cDDM \Q)_k,t_k \big).
\end{equation}
We refer to \cite{bour} for more details concerning discrete fractional Euler-Lagrange equations.

\section{Numerical tests}\label{section3}
In the following numerical tests, according to Remark \ref{rem1}, we are going to give graphic representations only of discrete solutions $\U$ and the study of the convergence of the variational integrator \eqref{eqpontsystfd} is only going to be evaluated on the convergence of the discrete control to the continuous one.

\subsection{The linear-quadratic example}\label{section31}
Linear-quadratic examples are often studied in the literature because they are used for tracking problems. The aim of these problems is to determine a control allowing to approach as much as possible reference trajectories, \cite[Part 1.4, p.49]{trel}. In this section, we study such an example, \cite[Part 4.4.3, example 3, p.53]{evan}. More generally, a quadratic Lagrangian is often natural (for example in order to minimize distances) and even if the constraint functions are frequently non linear, we are often leaded to study linearised versions. \\

Let us choose $d = m = A = 1 $ and $[a,b] = [0,1]$. Then, let us take the following quadratic Lagrangian and linear constraint function:
\begin{equation}\label{eq31-1}
\fonction{L}{\R^2 \times [0,1]}{\R}{(x,v,t)}{(x^2+v^2)/2} \quad \text{and} \quad \fonction{f}{\R^2 \times [0,1]}{\R}{(x,v,t)}{x+v.}
\end{equation}
Let us give the graphic representations of the numerical solutions $\U$ given by \eqref{eqpontsystfd} for $N=500$ and for $\alpha=1$, $3/4$, $1/2$, $1/4$: 
\begin{equation*}
\begin{array}{c}
\includegraphics[width=0.6\textwidth]{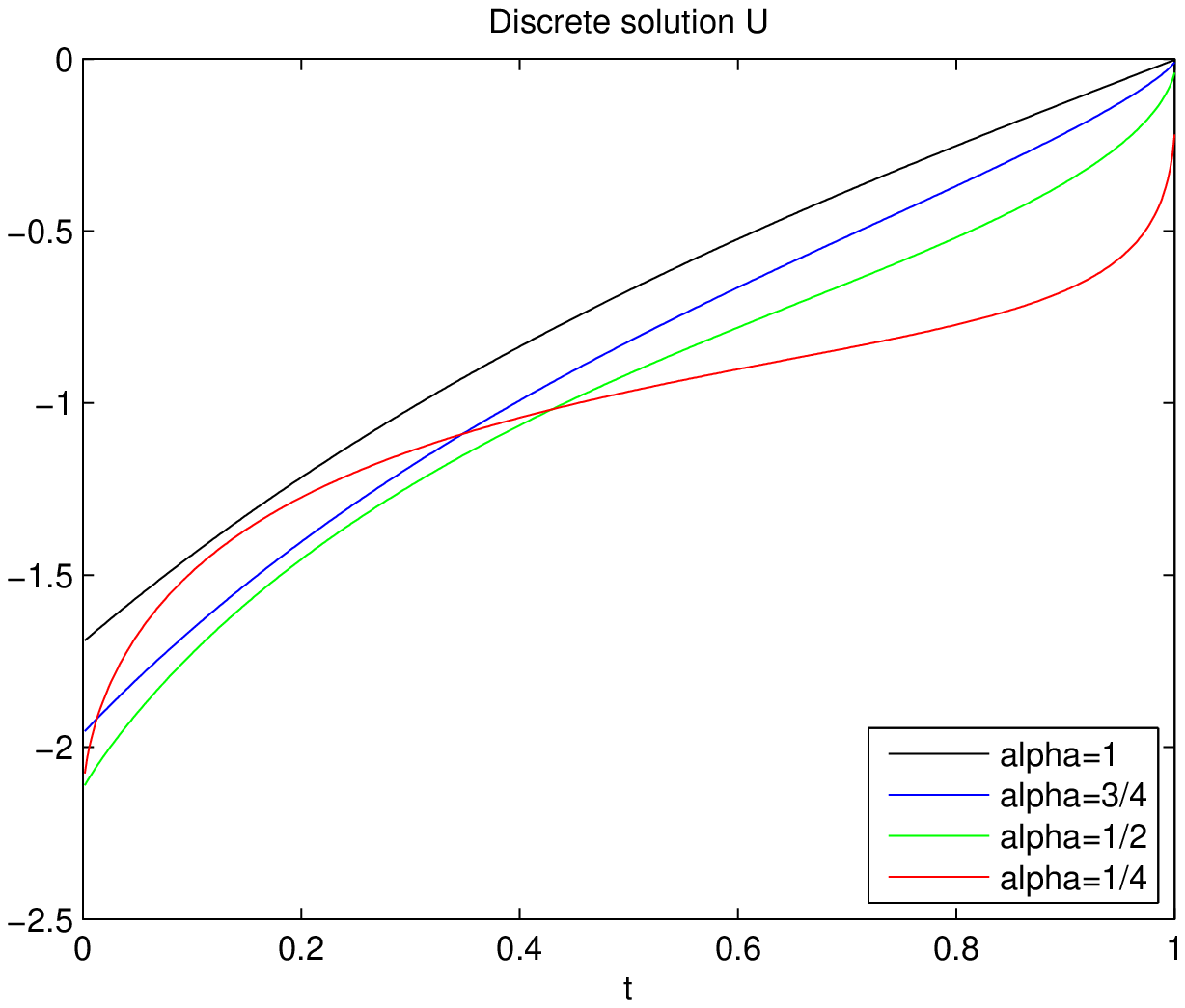}
\end{array}
\end{equation*}
We have seen in \cite{bour7} that the fractional Pontryagin's system \eqref{eqps} is explicitly solved only in the classical case $\alpha =1$ and we obtained the following unique critical point of $\mathcal{L}^1$:
\begin{equation}
\forall t \in [0,1], \; u(t) = \dfrac{\cosh(\sqrt{2})}{R} \sinh (\sqrt{2}t) - \dfrac{\sinh(\sqrt{2})}{R} \cosh (\sqrt{2}t),
\end{equation}
where $ R = \sqrt{2} \cosh (\sqrt{2}) - \sinh (\sqrt{2}) $. Hence, we can only test the convergence of the variational integrator \eqref{eqpontsystfd} for $\alpha =1$. We give the following graphic representing the logarithm of the error $\max \big( \vert u(t_k)-U_k \vert, k=1,\ldots,N \big)$ versus the logarithm of $h$ and the identity function for comparison:
\begin{equation*}
\begin{array}{c}
\includegraphics[width=0.6\textwidth]{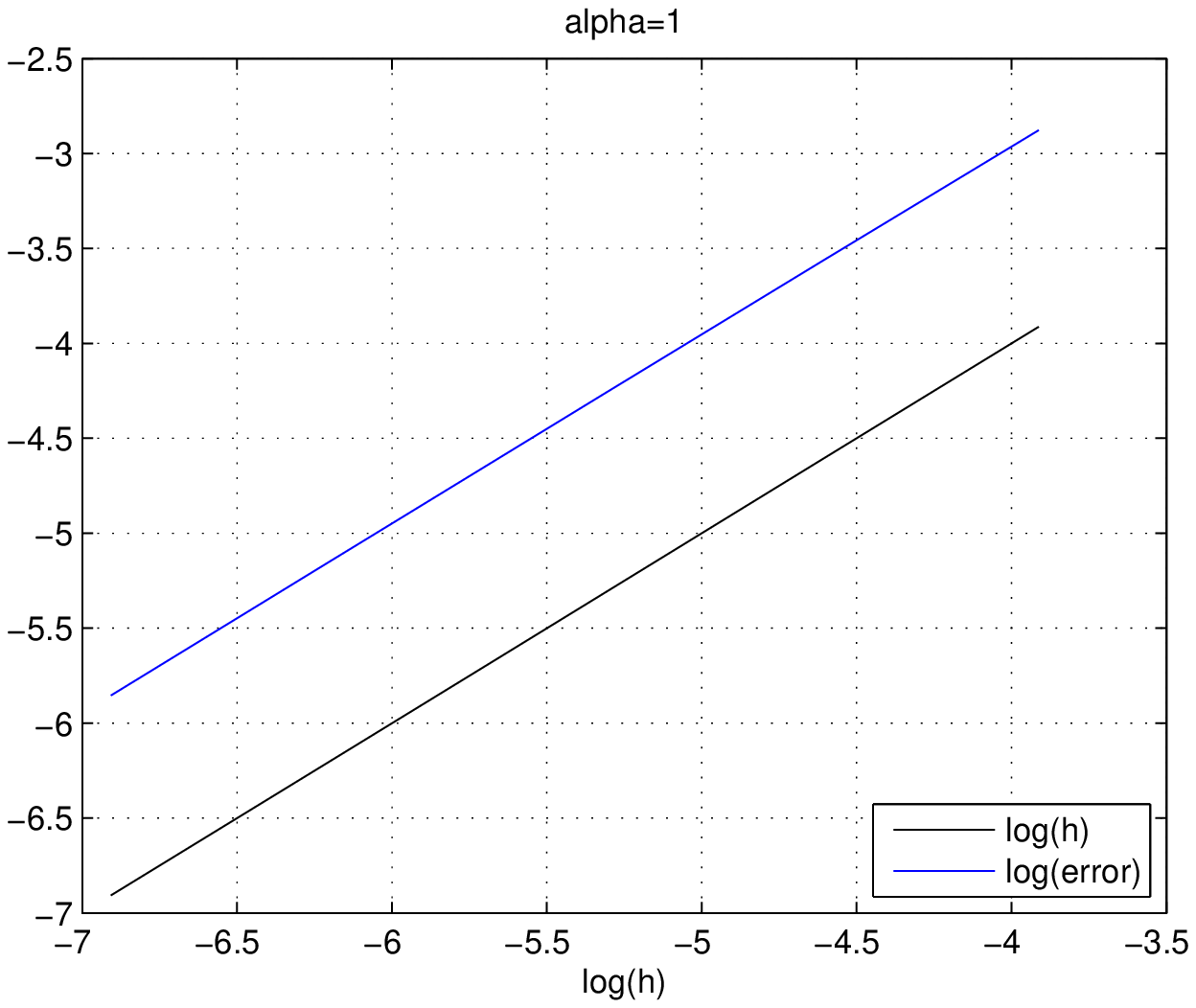}
\end{array}
\end{equation*}
In this example with $\alpha = 1$, the convergence seems then obtained with order $1$. Nevertheless, we do not know the exact solution of \eqref{eqps} in the strict fractional case $0<\alpha <1$. Consequently, we can not study the behaviour of the error in this case. 

\subsection{A solved fractional example}\label{section32} 
In this section, we are going to compute \eqref{eqpontsystfd} in the framework of an example solved in the strict fractional case in the sense that we know explicitly the unique critical point $u$ of $\LL^\alpha_h$ for any $0< \alpha \leq 1$, see \cite{bour7}. Consequently, for this example, we can test the convergence of the variational integrator \eqref{eqpontsystfd} for any $0 < \alpha \leq 1$. \\

Then, let us choose $d=m=A=1$ and $[a,b]=[0,1]$. Then, let us take the following Lagrangian and linear constraint function:
\begin{equation}\label{eq31-1}
\fonction{L}{\R^2 \times [0,1]}{\R}{(x,v,t)}{(1-t)x+(v^2/2)} \quad \text{and} \quad \fonction{f}{\R^2 \times [0,1]}{\R}{(x,v,t)}{x+v.}
\end{equation}

Let us give the graphic representations of the numerical solutions $\U$ given by \eqref{eqpontsystfd} for $N=500$ and for $\alpha=1$, $3/4$, $1/2$, $1/4$: 
\begin{equation*}
\begin{array}{c}
\includegraphics[width=0.6\textwidth]{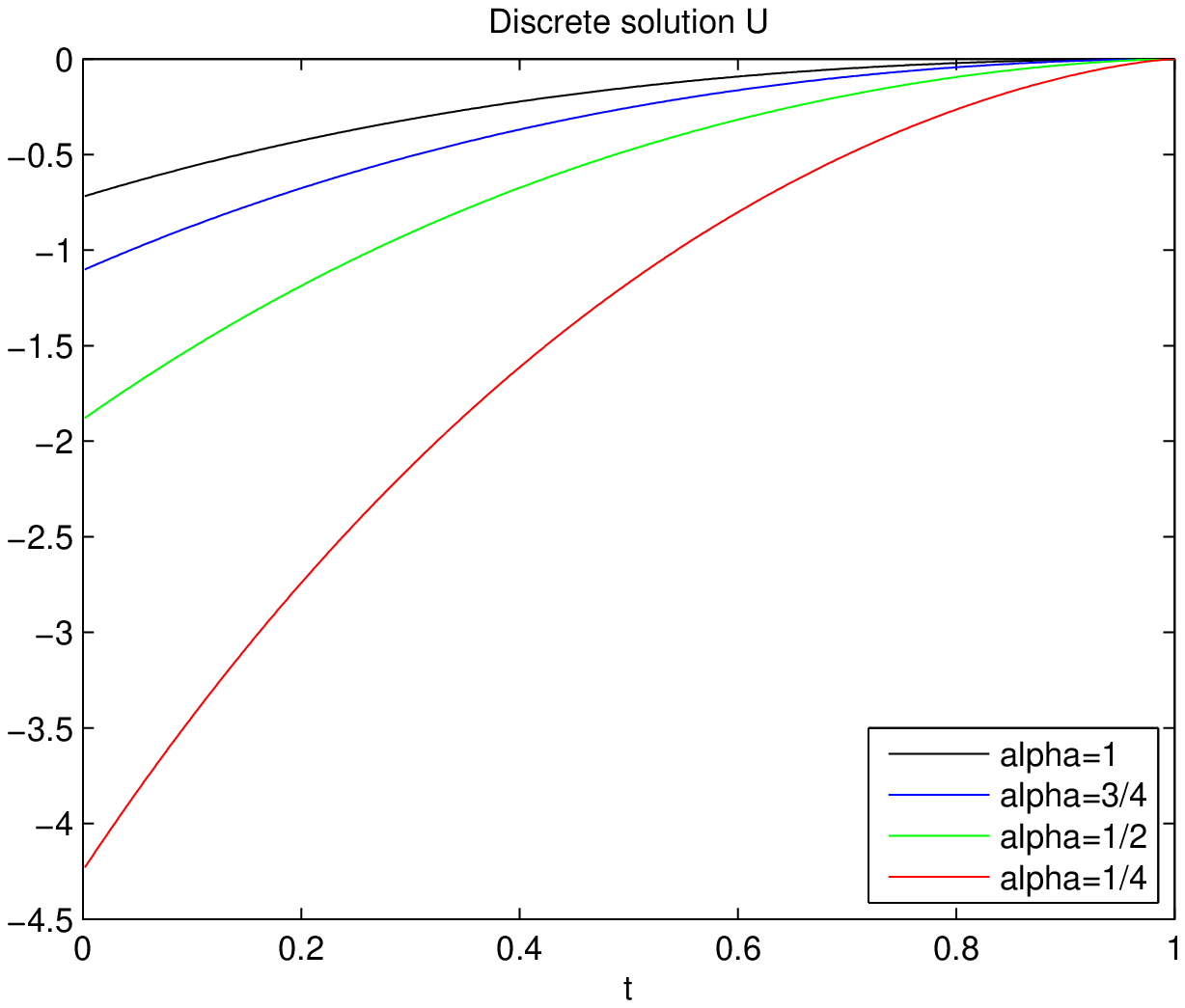}
\end{array}
\end{equation*}
As we have seen in \cite{bour7}, the fractional Pontryagin's system \eqref{eqps} is explicitly solved for any $0 < \alpha \leq 1$ and we obtained the following unique critical point of $\LL^\alpha$:
\begin{equation}
\forall t \in [0,1], \; u(t) = -(1-t)^{\alpha +1} {\rm E}_{\alpha,\alpha+2} \big( (1-t)^\alpha \big),
\end{equation}
where ${\rm E}_{\alpha,\alpha+2}$ is the Mittag-Leffler function with parameter $(\alpha,\alpha+2)$. Let us the convergence of the variational integrator \eqref{eqpontsystfd} for any $0<\alpha \leq 1$. We give the following graphics representing the logarithm of the error $\max \big( \vert u(t_k)-U_k \vert, k=1,\ldots,N \big)$ versus the logarithm of $h$ and the identity function for comparison for $\alpha=1, \; 3/4, \; 1/2, \; 1/4$:
\begin{equation*}
\begin{array}{cc}
\includegraphics[width=0.5\textwidth]{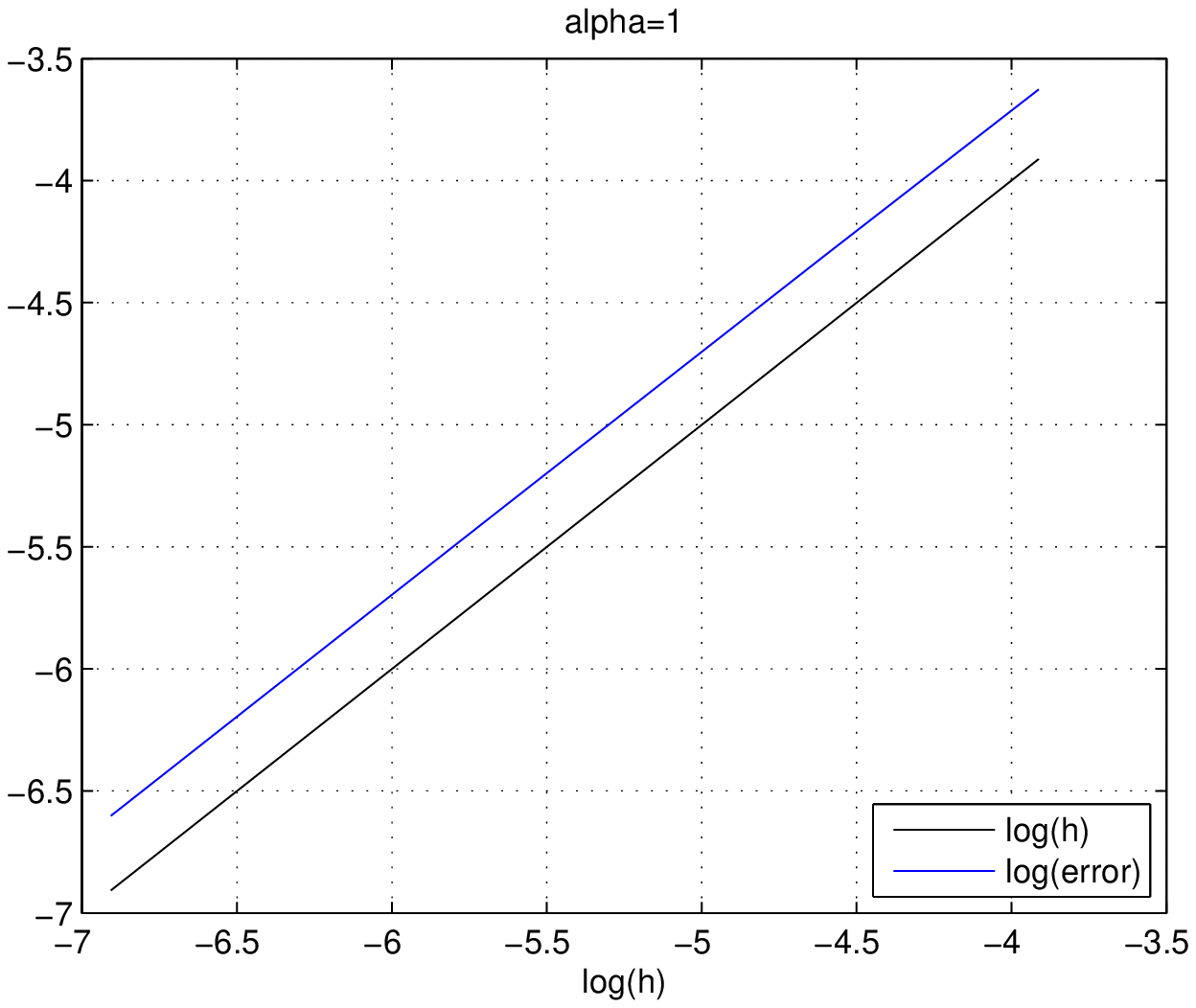} & \includegraphics[width=0.5\textwidth]{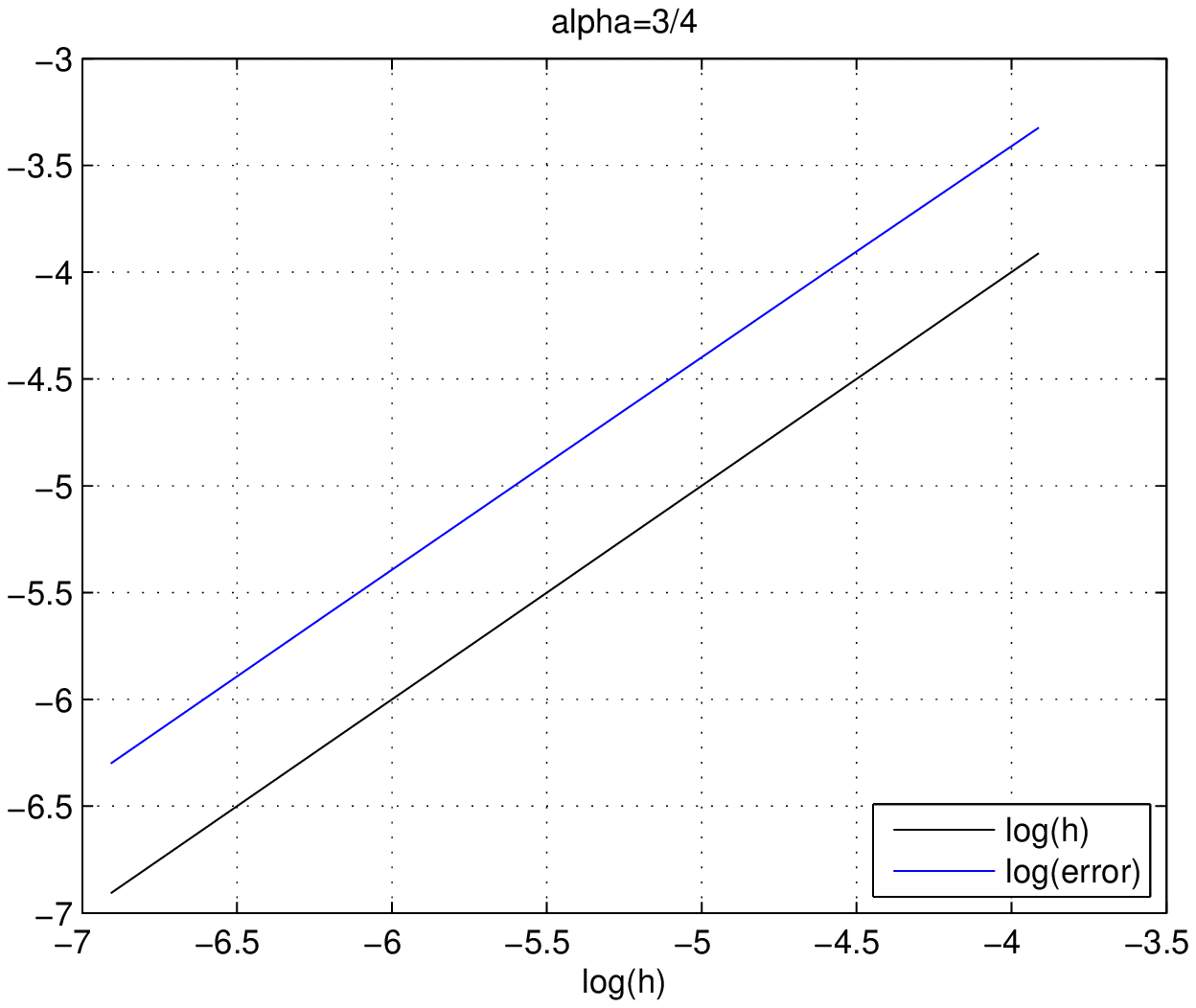}
\end{array}
\end{equation*}
\begin{equation*}
\begin{array}{cc}
\includegraphics[width=0.5\textwidth]{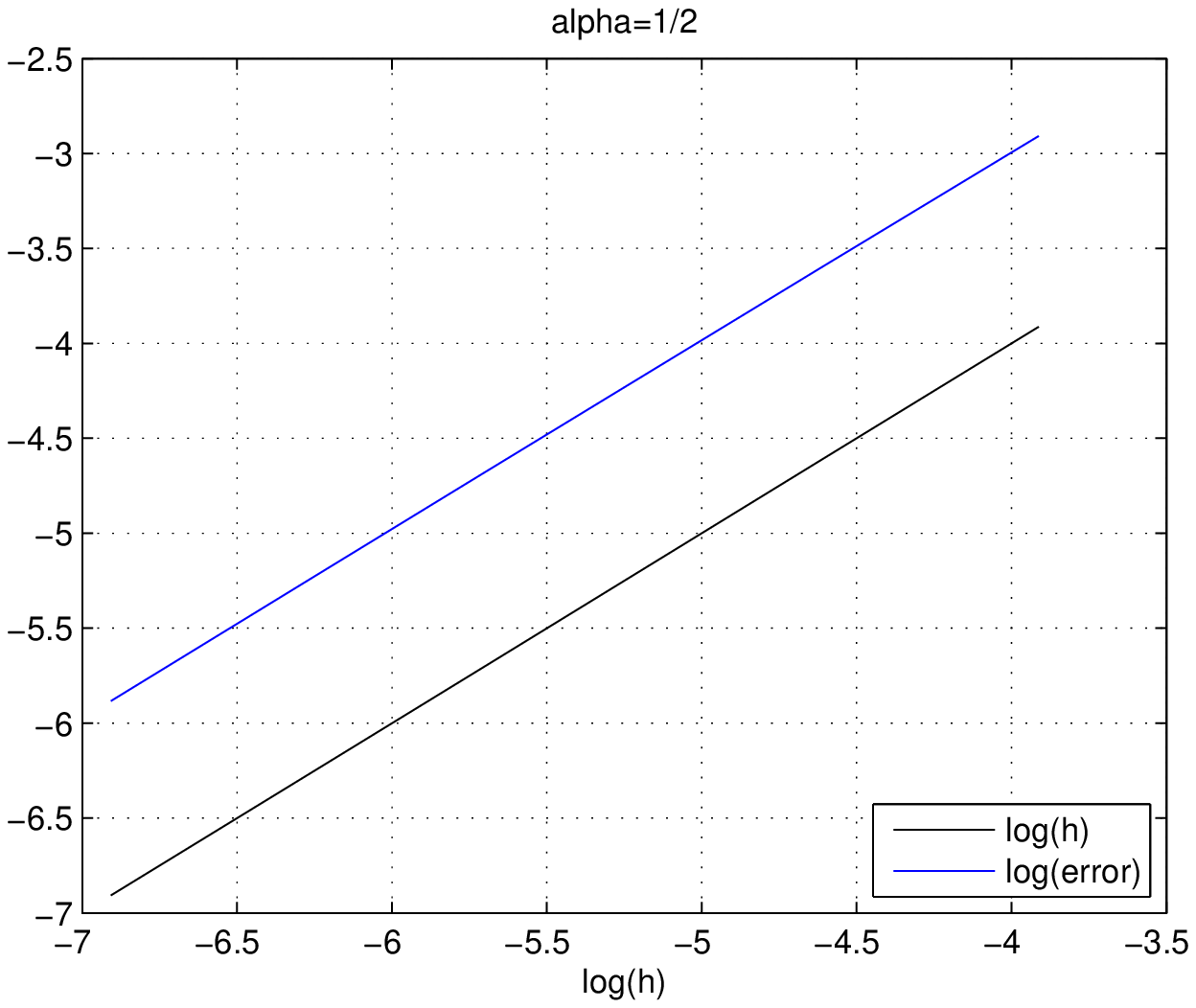} & \includegraphics[width=0.5\textwidth]{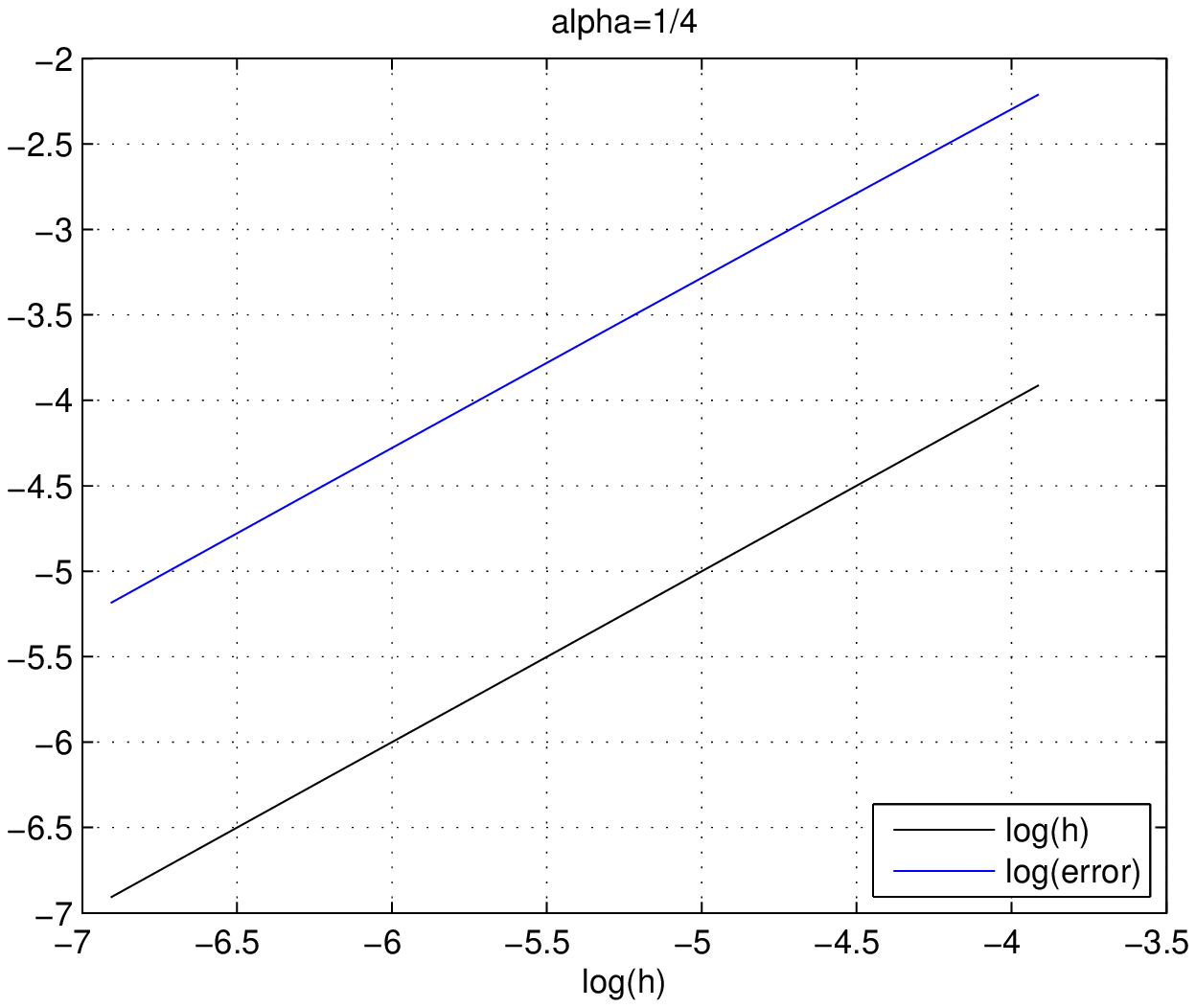}
\end{array}
\end{equation*}
For this example, the convergence seems then obtained for any $\alpha=1$, $3/4$, $1/2$, $1/4$ and still with order $1$. Hence, the graphics obtained in these Sections \ref{section31} and \ref{section32} make us confident with respect to the quality of \eqref{eqpontsystfd} both in the classical and strict fractional cases.

\section{A discrete fractional Noether's theorem}\label{section4}
Fractional Pontryagin's systems \eqref{eqps} are very difficult to solve explicitly, see example in Section \ref{section31}. Consequently, a deviously way in order to get informations on the exact solutions is to derive a constant of motion, \textit{i.e.} functions which are constant on each solution. Such conservation laws allow to obtain many informations in the phase space for example or to integrate the equation by quadrature. In \cite{bour7}, we prove a fractional Noether's theorem giving the existence of an explicit conservation law for fractional Pontryagin's systems \eqref{eqps} exhibiting a symmetry. Let us remind that this result is based on a preliminary result proved by Torres and Frederico in \cite{torr,torr2}. \\

In this section, we study the existence of discrete conservation laws for shifted discrete fractional Pontryagin's systems \eqref{eqpontsystfd}. Precisely, following the same strategy, we introduce the notion of \textit{discrete symmetry} for such systems and prove a discrete fractional Noether's theorem providing an \textit{explicit computable} discrete constant of motion. Let us note that this work is strongly inspired from our study in \cite{bour2} where we have provided a discrete fractional Noether's theorem for discrete fractional Euler-Lagrange equations admitting a discrete symmetry. \\

We first review the definition of a one parameter group of diffeomorphisms:

\begin{definition}
Let $n \in \N^*$. For any real $s$, let $\fonctionsansdef{\phi (s,\cdot)}{\R ^n}{\R ^n}$ be a diffeomorphism. Then, $\Phi = \{ \phi (s,\cdot) \}_{s \in \R}$ is a one parameter group of diffeomorphisms of $\R^n$ if it satisfies:
\begin{enumerate}
\item $\phi (0,\cdot) = Id_{\R ^n}$; 
\item $\forall s,s' \in \R, \; \phi (s,\cdot) \circ \phi (s',\cdot) = \phi (s+s',\cdot) $;
\item $\phi$ is of class $\CC^2$.
\end{enumerate}
\end{definition}

Usual examples of one parameter groups of diffeomorphisms are given by translations and rotations. The action of three one parameter groups of diffeomorphisms on an Hamiltonian allows to define the notion of a discrete symmetry for a shifted discrete fractional Pontryagin's system \eqref{eqpontsystfd}:

\begin{definition}\label{defsymd}
Let $\Phi_i = \{ \phi_i (s,\cdot) \}_{s \in \R}$, for $i=1,2,3$, be three one parameter groups of diffeomorphisms of $\R^d$, $ \R^m$ and $\R^d$ respectively. Let $L$ be a Lagrangian, $f$ be a constraint function and $H$ be the associated Hamiltonian. $H$ is said to be $\cDDM$-invariant under the action of $(\Phi_i)_{i=1,2,3}$ if it satisfies: for any $(\Q,\U,\PP)$ solution of \eqref{eqpontsystfd} and any $s \in \R$
\begin{multline}\label{eq4-1}
H \Big( \phi_1 \big(s,\Q\big), \phi_2 \big(s,\U\big) , \phi_3 \big(s,\sigma^{-1}(\PP) \big) ,\T \Big) - \phi_3 \big(s,\sigma^{-1}(\PP) \big) \cdot \cDDM \Big( \phi_1 \big(s,\Q\big) \Big) \\ = H\big(\Q,\U,\sigma^{-1}(\PP),\T \big) - \sigma^{-1}(\PP) \cdot \cDDM \Q.
\end{multline}
\end{definition}

From this notion, we prove the following Lemma:

\begin{lemma}\label{lemtorrd}
Let $L$ be a Lagrangian, $f$ be a constraint function and $H$ be the associated Hamiltonian. Let us assume that $H$ is $\cDDM$-invariant under the action of three one parameter groups of diffeomorphisms $(\Phi_i)_{i=1,2,3}$. Then, the following equality holds for any solution $(\Q,\U,\PP)$ solution of \eqref{eqpontsystfd}:
\begin{equation}\label{eqlemtorrd}
 \dfrac{\partial \phi_1}{\partial s} (0,\Q) \cdot \sigma^{-1} ( \DDP \PP ) - \cDDM \left( \dfrac{\partial \phi_1}{\partial s} (0,\Q) \right) \cdot \sigma^{-1} (\PP ) = 0.
\end{equation}
\end{lemma}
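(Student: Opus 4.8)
The plan is to differentiate the invariance identity \eqref{eq4-1} with respect to the parameter $s$ and then evaluate at $s=0$, exploiting that $\phi_i(0,\cdot)=\mathrm{Id}$ and that $(\Q,\U,\PP)$ solves \eqref{eqpontsystfd}. First I would write, for a fixed solution $(\Q,\U,\PP)$ of \eqref{eqpontsystfd}, the function
\[
s \longmapsto H\Big(\phi_1(s,\Q),\phi_2(s,\U),\phi_3(s,\sigma^{-1}(\PP)),\T\Big) - \phi_3\big(s,\sigma^{-1}(\PP)\big)\cdot\cDDM\Big(\phi_1(s,\Q)\Big),
\]
which by hypothesis is constant in $s$, hence has vanishing derivative at $s=0$. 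Computing this derivative by the chain rule produces terms involving $\partial H/\partial x$, $\partial H/\partial v$, $\partial H/\partial w$ evaluated at $\big(\Q,\U,\sigma^{-1}(\PP),\T\big)$, each contracted against $\tfrac{\partial\phi_i}{\partial s}(0,\cdot)$ applied to $\Q$, $\U$, $\sigma^{-1}(\PP)$ respectively, together with the two terms coming from the product $\phi_3(s,\sigma^{-1}(\PP))\cdot\cDDM(\phi_1(s,\Q))$, namely $-\tfrac{\partial\phi_3}{\partial s}(0,\sigma^{-1}(\PP))\cdot\cDDM\Q$ and $-\sigma^{-1}(\PP)\cdot\cDDM\big(\tfrac{\partial\phi_1}{\partial s}(0,\Q)\big)$. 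Here I use that $\cDDM$ is linear, so $\tfrac{d}{ds}\big|_{s=0}\cDDM(\phi_1(s,\Q)) = \cDDM\big(\tfrac{\partial\phi_1}{\partial s}(0,\Q)\big)$.

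The next step is to eliminate the unwanted terms using the equations \eqref{eqpontsystfd}. The shifted stationary equation gives $\tfrac{\partial H}{\partial v}\big(\Q,\U,\sigma^{-1}(\PP),\T\big)=0$, so the $\phi_2$-contribution drops out entirely; for this one must be mildly careful that the stationarity holds at exactly the arguments $\big(Q_k,U_k,\sigma^{-1}(\PP)_k,t_k\big)$ appearing after the chain rule, which it does. The term $\tfrac{\partial H}{\partial x}\big(\Q,\U,\sigma^{-1}(\PP),\T\big)\cdot\tfrac{\partial\phi_1}{\partial s}(0,\Q)$ has to be matched with $\DDP\PP$: the first line of \eqref{eqpontsystfd} is written with $\sigma$-shifted arguments, $\DDP\PP = \tfrac{\partial H}{\partial x}\big(\sigma(\Q),\sigma(\U),\PP,\sigma(\T)\big)$, so one re-indexes — i.e. applies $\sigma^{-1}$ to that identity — to read off $\tfrac{\partial H}{\partial x}\big(\Q,\U,\sigma^{-1}(\PP),\T\big) = \sigma^{-1}(\DDP\PP)$ on the appropriate index range. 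Similarly $\tfrac{\partial H}{\partial w}\big(\Q,\U,\sigma^{-1}(\PP),\T\big)=\cDDM\Q$ from the first equation of \eqref{eqpontsystfd}, so the $\phi_3$-contribution $\tfrac{\partial H}{\partial w}\cdot\tfrac{\partial\phi_3}{\partial s}(0,\sigma^{-1}(\PP))$ exactly cancels against $-\tfrac{\partial\phi_3}{\partial s}(0,\sigma^{-1}(\PP))\cdot\cDDM\Q$. After these cancellations only
\[
\tfrac{\partial\phi_1}{\partial s}(0,\Q)\cdot\sigma^{-1}(\DDP\PP) - \cDDM\Big(\tfrac{\partial\phi_1}{\partial s}(0,\Q)\Big)\cdot\sigma^{-1}(\PP) = 0
\]
survives, which is \eqref{eqlemtorrd}.

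The main obstacle, and the point requiring the most care, is bookkeeping of the index ranges and the shift operators: $\cDDM$ produces a vector indexed $k=1,\dots,N$ while $\DDP$ produces one indexed $k=0,\dots,N-1$, and $\sigma$, $\sigma^{-1}$ move between these ranges. One must check that every cancellation above holds componentwise on a common range of indices — in particular that the three equations of \eqref{eqpontsystfd}, read componentwise, supply exactly the substitutions needed at each index where the differentiated invariance relation is being evaluated. This is precisely the reason the definition of $\cDDM$-invariance in Definition \ref{defsymd} is phrased with $\sigma^{-1}(\PP)$ rather than $\PP$, and once the indices are aligned the computation is a routine application of the chain rule and linearity of the discrete fractional derivative. (A fully detailed version of this index chase is the sort of thing one would relegate to Appendix \ref{appA}.)
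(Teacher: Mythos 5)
Your proposal is correct and follows essentially the same route as the paper: differentiate the invariance identity \eqref{eq4-1} with respect to $s$, commute $\partial/\partial s$ with $\cDDM$ by linearity, evaluate at $s=0$, and then use the three equations of \eqref{eqpontsystfd} (stationarity to kill the $\partial H/\partial v$ term, the first Hamiltonian equation to cancel the $\phi_3$ terms, and the $\sigma^{-1}$-shifted second Hamiltonian equation to identify $\tfrac{\partial H}{\partial x}\big(\Q,\U,\sigma^{-1}(\PP),\T\big)$ with $\sigma^{-1}(\DDP\PP)$). The paper compresses these substitutions into one sentence, whereas you spell out the cancellations and the index alignment explicitly, but the argument is the same.
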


\begin{proof}
Let us differentiate \eqref{eq4-1} with respect to $s$ and let us invert the operator $ \cDDM $ and $\partial / \partial s $. Taking $s=0$, we finally obtain:
\begin{multline}
\dfrac{\partial H}{\partial x} (\star) \cdot \dfrac{\partial \phi_1}{\partial s}(0,\Q) + \dfrac{\partial H}{\partial v} (\star) \cdot \dfrac{\partial \phi_2}{\partial s}(0,\U) + \dfrac{\partial H}{\partial w} (\star) \cdot \dfrac{\partial \phi_3}{\partial s}\big( 0,\sigma^{-1}(\PP) \big) \\ - \dfrac{\partial \phi_3}{\partial s}\big( 0,\sigma^{-1}(\PP) \big) \cdot \cDDM \Q - \sigma^{-1} (\PP) \cdot \cDDM \left( \dfrac{\partial \phi_1}{\partial s}(0,\Q) \right) = 0,
\end{multline} 
where $\star = \big( \Q,\U,\sigma^{-1}(\PP),\T \big)$. Since $(\Q,\U,\PP)$ is solution of \eqref{eqpontsystfd}, we obtain \eqref{eqlemtorrd}.
\end{proof}

Let us note that this last result corresponds to the discrete version of the result proved by Torres and Frederico in \cite{torr,torr2}. Let us remind that our aim is to provide an explicit discrete constant of motion for shifted discrete fractional Pontryagin's systems \eqref{eqpontsystfd} exhibiting a discrete symmetry. Our result is based on Lemma \ref{lemtorrd} and on the following implication:
\begin{equation}\label{eq4-2}
\forall \G \in \R^{N+1}, \; \Delta^1_- \G = 0 \Longrightarrow \exists c \in \R, \; \forall k=0,\ldots,N, \; G_k = c.
\end{equation}
Namely, if the discrete derivative of $ \G$ vanishes, then $ \G$ is constant. Consequently, our aim is to write the left term of \eqref{eqlemtorrd} as an explicit discrete derivative (\textit{i.e.} as $\Delta^1_-$ of an explicit quantity). In this way, we are going to use a \textit{discrete transfer formula} as it is done in \cite{bour2} for discrete fractional Euler-Lagrange equations admitting a discrete symmetry. \\

Nevertheless, we have first to introduce some square matrices of length $(N+1)$. First, $B_{1} := \text{Id}_{N+1}$ and then, for any $r \in \{ 2,\ldots,N \}$, the square matrices $B_r \in \mathcal{M}_{N+1}$ defined by:
\begin{equation}
\forall i,j=0,\ldots,N, \; (B_r)_{i,j} := \delta_{\{1 \leq i \leq N-1\}} \delta_{\{1 \leq j \leq N-r\}} \delta_{\{0 \leq i-j \leq r-1\}} - \delta_{\{j=0\}} \delta_{\{r \leq i\}},
\end{equation}
where $\delta$ is the Kronecker symbol. Secondly, we define the square matrices $C_r \in \mathcal{M}_{N+1}$ by:
\begin{equation}
\forall r=1,\ldots,N, \; \forall i,j=0,\ldots,N, \; (C_r)_{i,j} := \delta_{\{r \leq i\}} \delta_{\{j=0\}} .
\end{equation}
Finally, we define the square matrices $A_r \in \mathcal{M}_{N+1}$ by:
\begin{equation}
\forall r=1,\ldots,N, \; A_r := \alpha_r B_r + \beta^\alpha_r C_r ,
\end{equation}
where $\beta^\alpha_r = \sum_{k=0}^{r} \alpha_k $. Examples of matrices $A_r \in \mathcal{M}_{N+1}$ for $N=5$ are given in Appendix \ref{appAd}.

\begin{lemma}[Discrete transfer formula]\label{lemdtransform}
Let $\G^1$, $\G^2 \in \RN$ satisfying $G^2_N = 0$. Then, the following equality holds:
\begin{equation}\label{eq4-3}
 \G^1 \cdot \sigma^{-1} ( \DDP \G^2 ) - (\cDDM \G^1) \cdot \sigma^{-1} (\G^2 ) = h^{1-\alpha} \Delta^1_- \Big[ \di \sum_{r=1}^N A_r \times \big( \G^1 \cdot \sigma^{r-1} (\G^2 ) \big) \Big].
\end{equation}
\end{lemma}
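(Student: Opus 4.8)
The plan is to establish \eqref{eq4-3} component by component. Denote by $\Psi_k$, for $k=0,\ldots,N$, the $k$-th component of the bracketed sequence $\sum_{r=1}^N A_r \times \big(\G^1 \cdot \sigma^{r-1}(\G^2)\big)$, so that the right-hand side of \eqref{eq4-3} at index $k$ is $h^{-\alpha}(\Psi_k - \Psi_{k-1})$; hence \eqref{eq4-3} amounts to the family of scalar telescoping identities
\begin{equation*}
h^\alpha \Big( G^1_k \cdot \sigma^{-1}(\DDP \G^2)_k - (\cDDM \G^1)_k \cdot \sigma^{-1}(\G^2)_k \Big) = \Psi_k - \Psi_{k-1}, \qquad k=1,\ldots,N.
\end{equation*}
Each $\Psi_k$ is well defined: since $B_1 = \text{Id}_{N+1}$ and, for $r\ge 2$, $B_r$ is supported on the columns $1\le j\le N-r$ while $C_r$ is supported on the column $j=0$, only products $G^1_j \cdot G^2_{j+r-1}$ with $j+r-1\le N$ occur, and the padding chosen when reading $\sigma^{r-1}(\G^2)$ as an element of $(\R^d)^{N+1}$ does not matter. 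I would first substitute the Gr\"unwald--Letnikov expansions of $\DDP$ and $\DDM$, together with $\cDDM \G^1 = \DDM(\G^1 - G^1_0)$ (where $G^1_0$ is the constant sequence) and $\sigma^{-1}(\G^2)_k = G^2_{k-1}$, and use $\sum_{r=0}^k \alpha_r = \beta^\alpha_k$, to rewrite the left-hand side of the displayed identity as
\begin{equation*}
G^1_k \cdot \Big( \di\sum_{r=0}^{N-k+1} \alpha_r\, G^2_{k-1+r} \Big) \;-\; \Big( \di\sum_{r=0}^{k} \alpha_r\, G^1_{k-r} \Big) \cdot G^2_{k-1} \;+\; \beta^\alpha_k\, G^1_0 \cdot G^2_{k-1};
\end{equation*}
here the first two terms form the Riemann--Liouville part (no Caputo correction) and the last term is the Caputo correction.

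The core step is a discrete Abel summation (summation by parts) applied to the Riemann--Liouville part. Expanding both convolution sums and reorganising the resulting double sum so that the products are grouped by the shift parameter $r$ --- each product appearing as $\alpha_r\, G^1_j \cdot G^2_{j+r-1}$ --- one telescopes: the grouped sum evaluated at level $k$ minus its value at level $k-1$ reproduces the Riemann--Liouville part. The window defining $B_r$, namely $\{\, j : 1\le j\le N-r,\ 0\le i-j\le r-1 \,\}$, is precisely the set of indices $j$ occurring in the telescoped partial sum at level $i=k$, so reading off the coefficients yields the $\alpha_r B_r$ contribution to $A_r$. The Caputo correction $\beta^\alpha_k\, G^1_0 \cdot G^2_{k-1}$ depends only on $G^1_0$ and is absorbed by the same telescoping restricted to column $j=0$: there the Gr\"unwald--Letnikov weights accumulate into the partial sums $\beta^\alpha_r = \sum_{\ell=0}^r \alpha_\ell$, and, together with the column-$0$ term $-\delta_{\{j=0\}}\delta_{\{r\le i\}}$ of $B_r$ and the recursion $\beta^\alpha_r - \alpha_r = \beta^\alpha_{r-1}$, this produces the $\beta^\alpha_r C_r$ contribution to $A_r$. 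Summing the two contributions gives exactly $\Psi_k - \Psi_{k-1}$.

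The main obstacle is purely combinatorial: one must keep careful track of the finite-range effects at the ends of all the sums --- the extreme indices $k=1$ and $k=N$, the boundary columns $j=0$ and $j=N-r$, and the degenerate shift $r=1$ where $B_1 = \text{Id}_{N+1}$ --- and verify that the hypothesis $G^2_N = 0$, which both gives $\cDDP \G^2 = \DDP \G^2$ and annihilates the top term $\alpha_{N-k+1} G^2_N$ of $\sigma^{-1}(\DDP \G^2)_k$, lets the Abel summation close with no residual boundary term. Since the identity is entrywise, it is prudent to check it first on a small instance; the explicit matrices $A_r$ for $N=5$ recorded in Appendix \ref{appAd} make this straightforward, and inspecting the cases $k=1$ and $k=N$ there exhibits the two mechanisms --- $\alpha_r B_r$ for the interior products $G^1_j \cdot G^2_{j+r-1}$ and $\beta^\alpha_r C_r$ for the $G^1_0$ terms --- in isolation.
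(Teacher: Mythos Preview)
Your proposal is correct and follows essentially the same approach as the paper's proof: expand the Gr\"unwald--Letnikov sums, isolate the Caputo correction $\beta^\alpha_k\,G^1_0\cdot G^2_{k-1}$, and telescope the remaining products $\alpha_r\,G^1_j\cdot G^2_{j+r-1}$ grouped by the shift $r$, with $G^2_N=0$ killing the top boundary term. The only cosmetic difference is direction: the paper builds the antiderivative forward (defining $V_i=h\sum_{r\le i}Y_r$ and $W_i=h\sum_{j\le i}Z_j$ and then identifying these partial sums with the $C_r$- and $B_r$-matrix expressions), whereas you propose to start from $\Psi_k$ and verify $\Psi_k-\Psi_{k-1}$ matches the left-hand side; the combinatorial content---in particular the index identity that pins down the support window $\{1\le j\le N-r,\ 0\le i-j\le r-1\}$ of $B_r$ and its column-$0$ correction---is the same either way.
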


\begin{proof} See Appendix \ref{appAd}. \end{proof}

Consequently, combining Lemmas \ref{lemtorrd} and \ref{lemdtransform}, we prove:

\begin{theorem}[Discrete fractional Noether's theorem]\label{thmdfnoether}
Let $L$ be a Lagrangian, $f$ be a constraint function and $H$ be the associated Hamiltonian. Let us assume that $H$ is $\cDDM$-invariant under the action of three one parameter groups of diffeomorphisms $(\Phi_i)_{i=1,2,3}$. Then, the following equality holds for any solution $(\Q,\U,\PP)$ of \eqref{eqpontsystfd}:
\begin{equation}
\Delta^1_- \left[ \sum_{r=1}^N A_r \times \left( \dfrac{\partial \phi_1}{\partial s} (0,\Q)  \cdot \sigma^{r-1} (\PP) \right) \right] = 0.
\end{equation}
\end{theorem}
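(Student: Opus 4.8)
The statement is an immediate consequence of Lemmas~\ref{lemtorrd} and~\ref{lemdtransform}, so the proof is just a matter of feeding the conclusion of the first lemma into the identity of the second with the right choice of test sequences. The plan is to fix a solution $(\Q,\U,\PP)$ of \eqref{eqpontsystfd} and apply Lemma~\ref{lemdtransform} with $\G^1 := \partial \phi_1 / \partial s (0,\Q)$ and $\G^2 := \PP$. The hypothesis $G^2_N = 0$ required by Lemma~\ref{lemdtransform} holds because $(\Q,\U,\PP)$ solves \eqref{eqpontsystfd}, whose last line imposes $P_N = 0$ (equivalently, this is why we could write $\cDDP \PP = \DDP \PP$ for the discrete adjoint variable). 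So the discrete transfer formula applies verbatim.

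With this substitution, the left-hand side of \eqref{eq4-3} becomes exactly
\[
\dfrac{\partial \phi_1}{\partial s}(0,\Q) \cdot \sigma^{-1}(\DDP \PP) - \cDDM\!\left( \dfrac{\partial \phi_1}{\partial s}(0,\Q) \right) \cdot \sigma^{-1}(\PP),
\]
which is precisely the left-hand side of \eqref{eqlemtorrd}. Since $H$ is assumed $\cDDM$-invariant under $(\Phi_i)_{i=1,2,3}$, Lemma~\ref{lemtorrd} tells us this quantity vanishes. Hence the right-hand side of \eqref{eq4-3} vanishes as well, i.e.
\[
h^{1-\alpha} \Delta^1_- \left[ \sum_{r=1}^N A_r \times \left( \dfrac{\partial \phi_1}{\partial s}(0,\Q) \cdot \sigma^{r-1}(\PP) \right) \right] = 0,
\]
and dividing by the nonzero constant $h^{1-\alpha}$ gives the announced equality. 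That is the whole argument.

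The only point requiring a word of care — and the one I would check before writing — is that the quantity $\partial \phi_1 / \partial s (0,\Q)$ is indeed an element of $(\R^d)^{N+1}$ to which $\cDDM$, $\sigma^{-1}$, $\sigma^{r-1}$ and the matrices $A_r$ may legitimately be applied: here $\phi_1(s,\cdot)$ is a one parameter group of diffeomorphisms of $\R^d$, so $\partial \phi_1/\partial s(0,\cdot) : \R^d \to \R^d$ is a well-defined $\CC^1$ vector field, and applying it componentwise to $\Q = (Q_k)_{k=0,\dots,N}$ produces a bona fide sequence in $(\R^d)^{N+1}$. No genuine obstacle arises; the content of the theorem is entirely carried by Lemmas~\ref{lemtorrd} and~\ref{lemdtransform}, and the present proof merely records their composition. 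One could optionally remark that, by implication \eqref{eq4-2}, the vanishing of the $\Delta^1_-$ means the bracketed quantity is a genuine discrete constant of motion, which is the interpretation motivating the theorem.

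\begin{proof}
Let $(\Q,\U,\PP)$ be a solution of \eqref{eqpontsystfd}. In particular $P_N = 0$, so $\G^2 := \PP$ satisfies the hypothesis $G^2_N = 0$ of Lemma~\ref{lemdtransform}. Since $\phi_1$ is of class $\CC^2$, the map $\partial \phi_1/\partial s(0,\cdot)$ is a $\CC^1$ vector field on $\R^d$, and applying it componentwise to $\Q$ yields $\G^1 := \partial \phi_1/\partial s(0,\Q) \in \RN$. Applying Lemma~\ref{lemdtransform} to this pair $(\G^1,\G^2)$ gives
\begin{equation*}
\dfrac{\partial \phi_1}{\partial s}(0,\Q) \cdot \sigma^{-1}(\DDP \PP) - \Big( \cDDM \dfrac{\partial \phi_1}{\partial s}(0,\Q) \Big) \cdot \sigma^{-1}(\PP) = h^{1-\alpha} \Delta^1_- \Big[ \di \sum_{r=1}^N A_r \times \big( \dfrac{\partial \phi_1}{\partial s}(0,\Q) \cdot \sigma^{r-1}(\PP) \big) \Big].
\end{equation*}
By hypothesis, $H$ is $\cDDM$-invariant under the action of $(\Phi_i)_{i=1,2,3}$, so Lemma~\ref{lemtorrd} applies and the left-hand side above is equal to $0$. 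Therefore the right-hand side vanishes, and dividing by $h^{1-\alpha} \neq 0$ yields
\begin{equation*}
\Delta^1_- \Big[ \di \sum_{r=1}^N A_r \times \big( \dfrac{\partial \phi_1}{\partial s}(0,\Q) \cdot \sigma^{r-1}(\PP) \big) \Big] = 0,
\end{equation*}
which is the claimed equality.
\end{proof}
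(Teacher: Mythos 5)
Your proof is correct and is exactly the paper's argument: the paper proves Theorem \ref{thmdfnoether} precisely by combining Lemma \ref{lemtorrd} with the discrete transfer formula of Lemma \ref{lemdtransform} applied to $\G^1 = \partial\phi_1/\partial s(0,\Q)$ and $\G^2 = \PP$ (using $P_N = 0$), just as you do. Your additional checks (that $\G^1 \in \RN$ and that $h^{1-\alpha} \neq 0$) are harmless refinements of the same route.
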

According to Equation \eqref{eq4-2}, this theorem provides a discrete constant of motion for any shifted discrete fractional Pontryagin's systems \eqref{eqpontsystfd} exhibiting a discrete symmetry. Moreover, this discrete conservation law is not only \textit{explicit} but also \textit{computable} in a finite number of steps. Let us see a concrete example:

\begin{example}\label{ex}
Let us consider $d = m = 2$, the following quadratic Lagrangian and the following linear constraint function:
\begin{equation}
\fonction{L}{\R^2 \times \R^2 \times [a,b]}{\R}{(x,v,t)}{(\Vert x \Vert^2 + \Vert v \Vert^2 )/2} \quad \text{and} \quad \fonction{f}{\R^2 \times \R^2 \times [a,b]}{\R^2}{(x,v,t)}{x+v.}
\end{equation}
Then, we consider the three one parameter groups of diffeomorphisms given by the following rotations:
\begin{equation}
\fonction{\phi_i}{\R \times \R^2}{\R^2}{(s,x_1,x_2)}{\left( \begin{array}{cc} \cos (s \theta_i) & - \sin (s \theta_i) \\ \sin (s \theta_i) & \cos (s \theta_i)  \end{array} \right) \left( \begin{array}{c} x_1 \\ x_2 \end{array} \right),}
\end{equation}
for $i=1,2,3$ and where $\theta_1$, $\theta_2 \in \R$ and $\theta_3 = - \theta_1$. With these parameters, one can prove that the Hamiltonian $H$ associated to $L$ and $f$ is $\cDDM$-invariant under the action of $(\Phi_i)_{i=1,2,3}$. Consequently, the fractional Pontryagin's system \eqref{eqpontsystfd} admits a symmetry and then admits an explicit discrete conservation law given by the discrete fractional Noether's Theorem \ref{thmdfnoether}. \\

We choose $A=(1,2)$, $N=100$ and $\theta_1 = \theta_2 = - \theta_3 = 1$. Let us compute \eqref{eqpontsystfd} for $\alpha=1$, $3/4$, $1/2$, $1/4$. Then, we denote $\Q = (\Q^1,\Q^2)$ and $\PP = (\PP^1,\PP^2)$ the discrete solutions obtained and we denote $\G = \partial \phi_1 / \partial s (0,\Q) = (-\Q^2,\Q^1)$. We are then interested in the value of:
\begin{equation}\label{eq4-4}
\sum_{r=1}^N A_r \times \left( \G  \cdot \sigma^{r-1} (\PP) \right).
\end{equation}
Let us see the graphics obtained by the computation of \eqref{eqpontsystfd} and by the computation of the quantity given in Equation \eqref{eq4-4} for $\alpha=1$, $3/4$, $1/2$, $1/4$:
\begin{equation*}
\begin{array}{cc}
\includegraphics[width=0.5\textwidth]{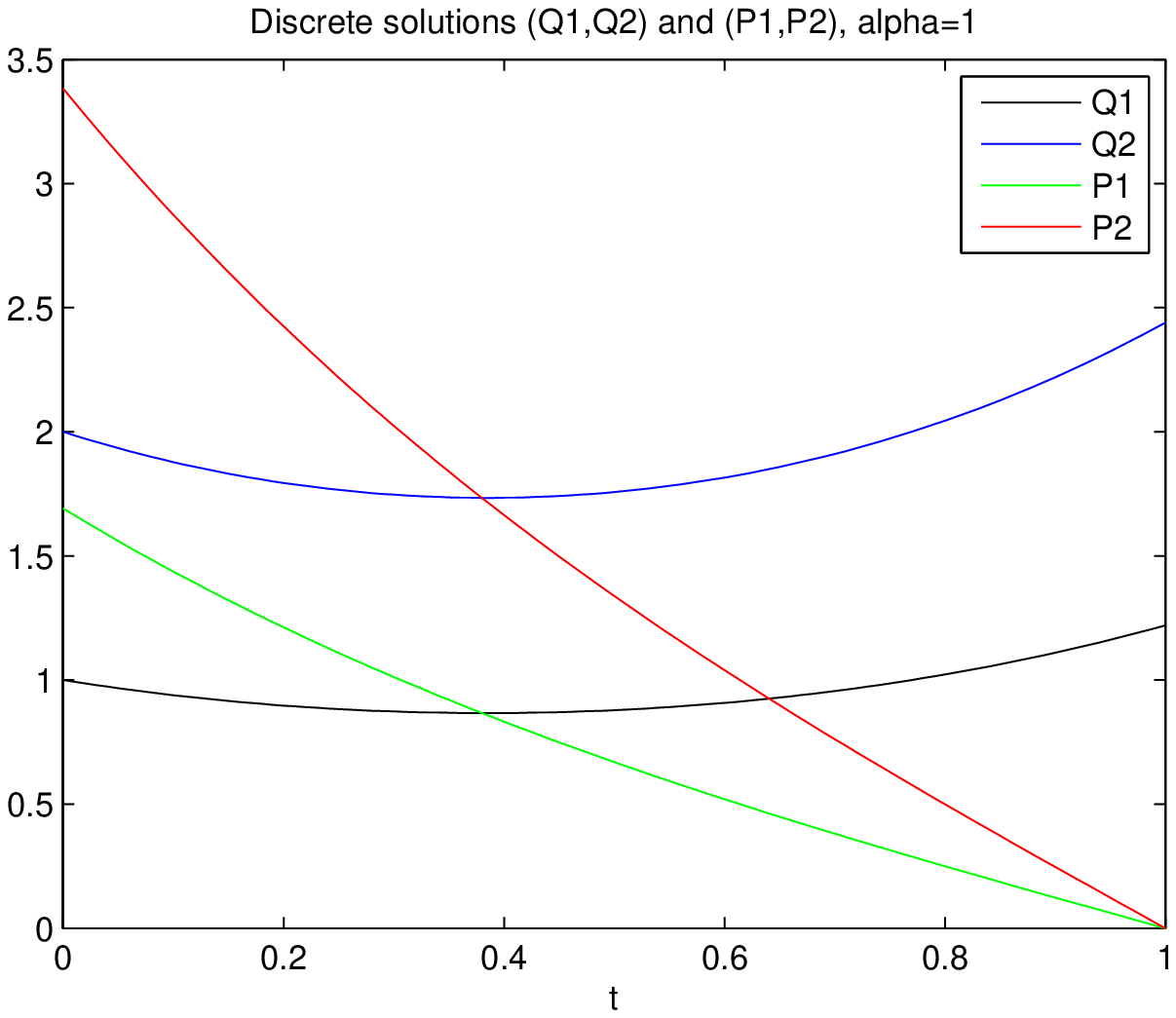} & \includegraphics[width=0.5\textwidth]{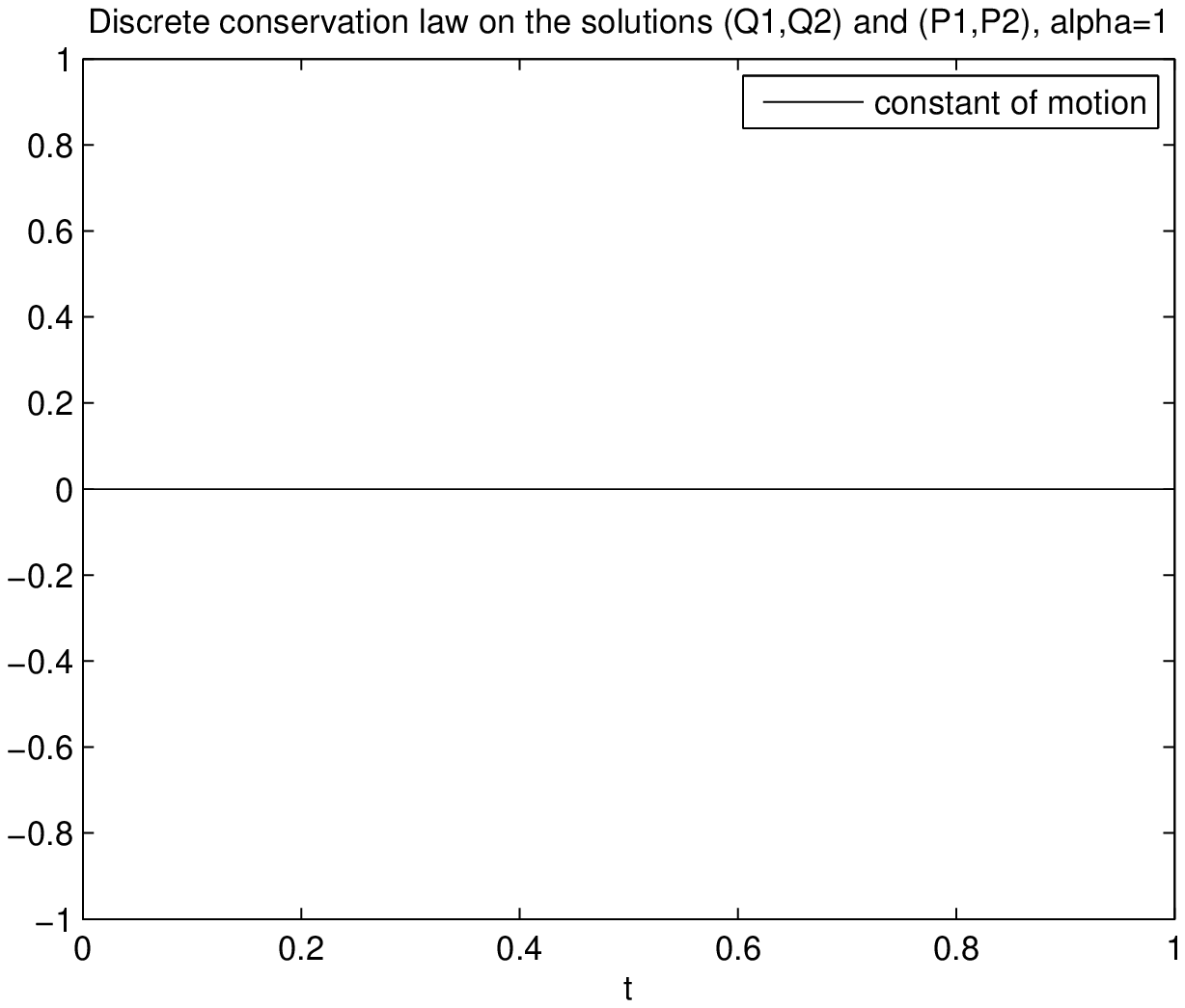}
\end{array}
\end{equation*}
\begin{equation*}
\begin{array}{cc}
\includegraphics[width=0.5\textwidth]{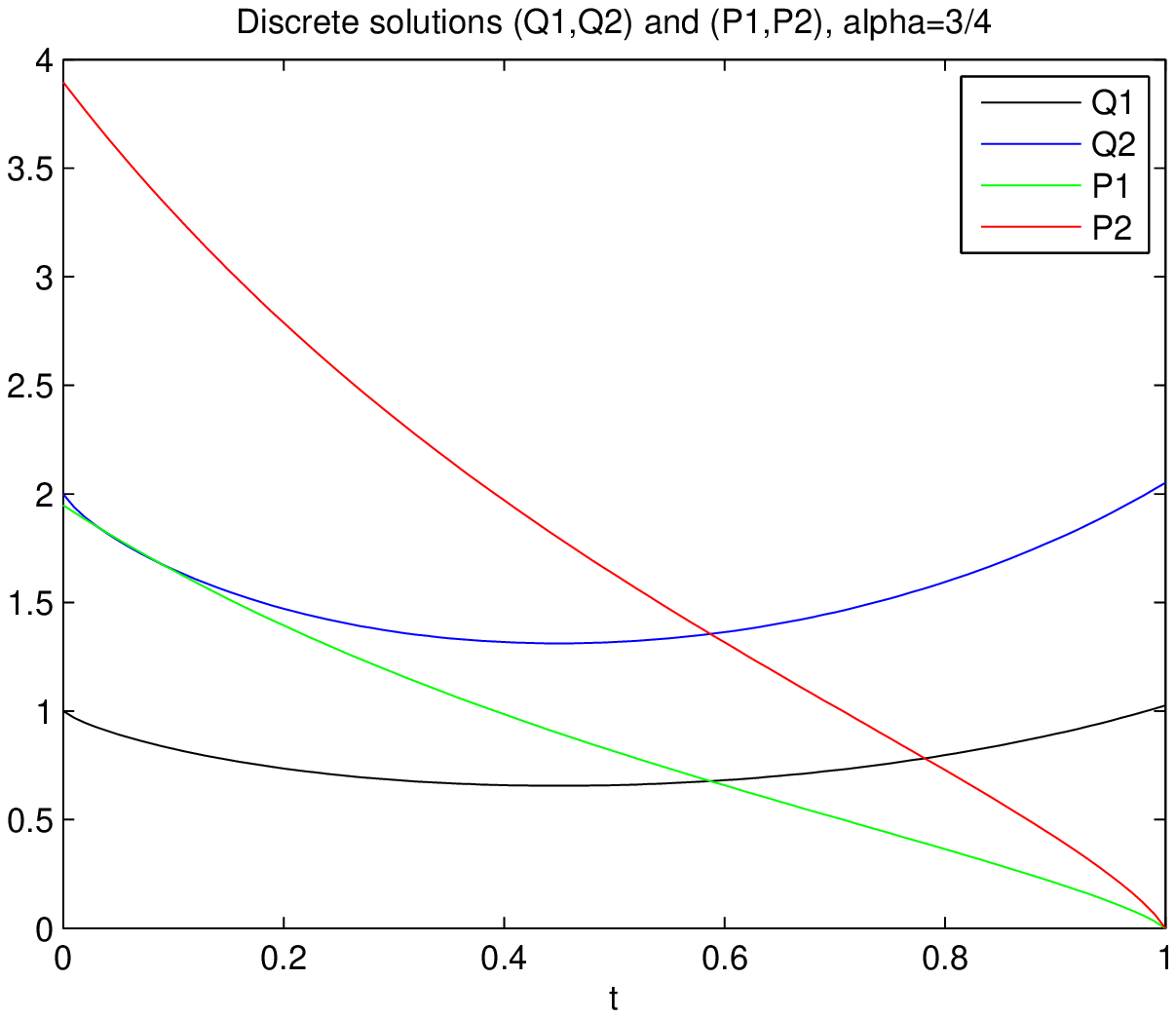} & \includegraphics[width=0.5\textwidth]{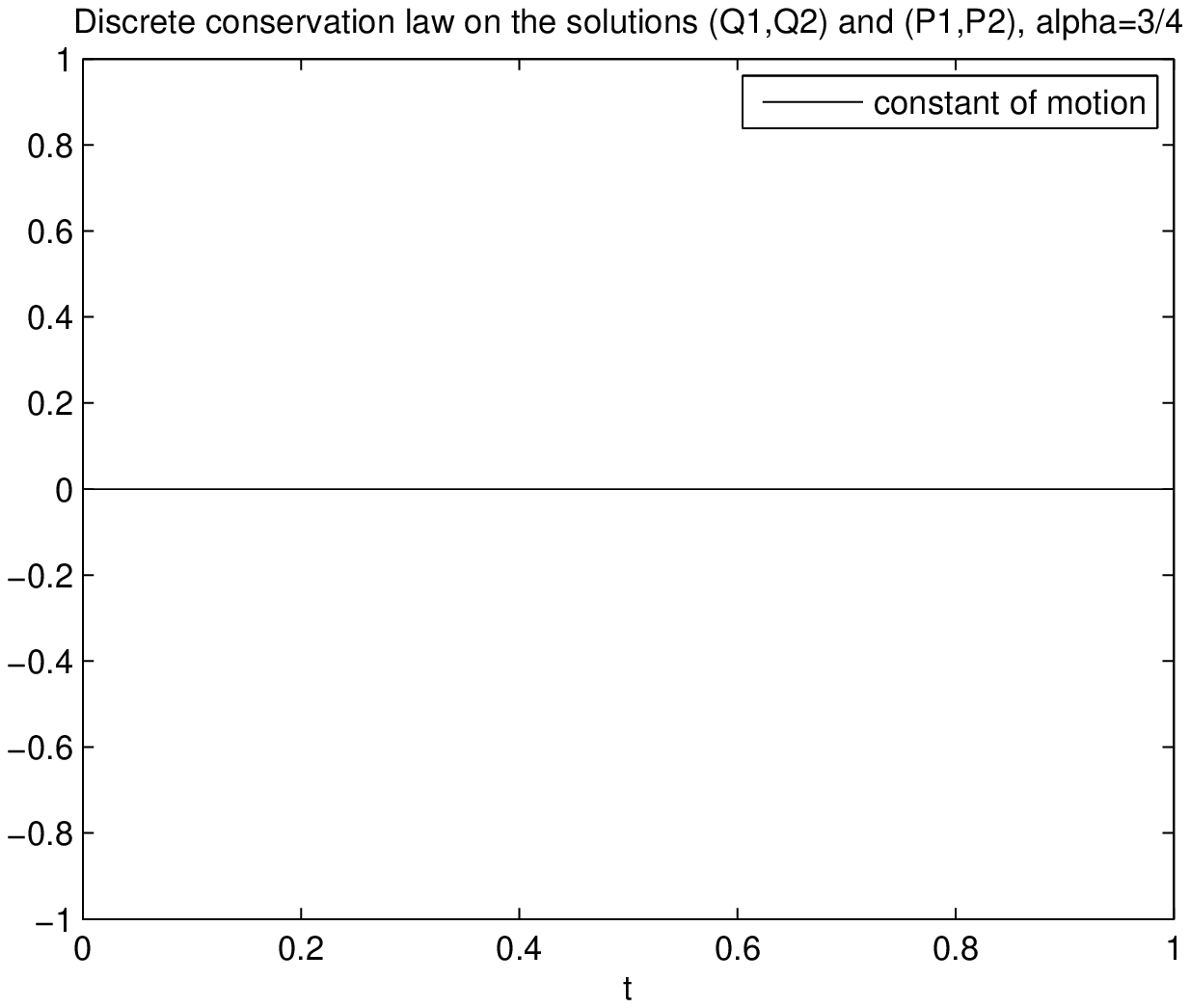}
\end{array}
\end{equation*}
\begin{equation*}
\begin{array}{cc}
\includegraphics[width=0.5\textwidth]{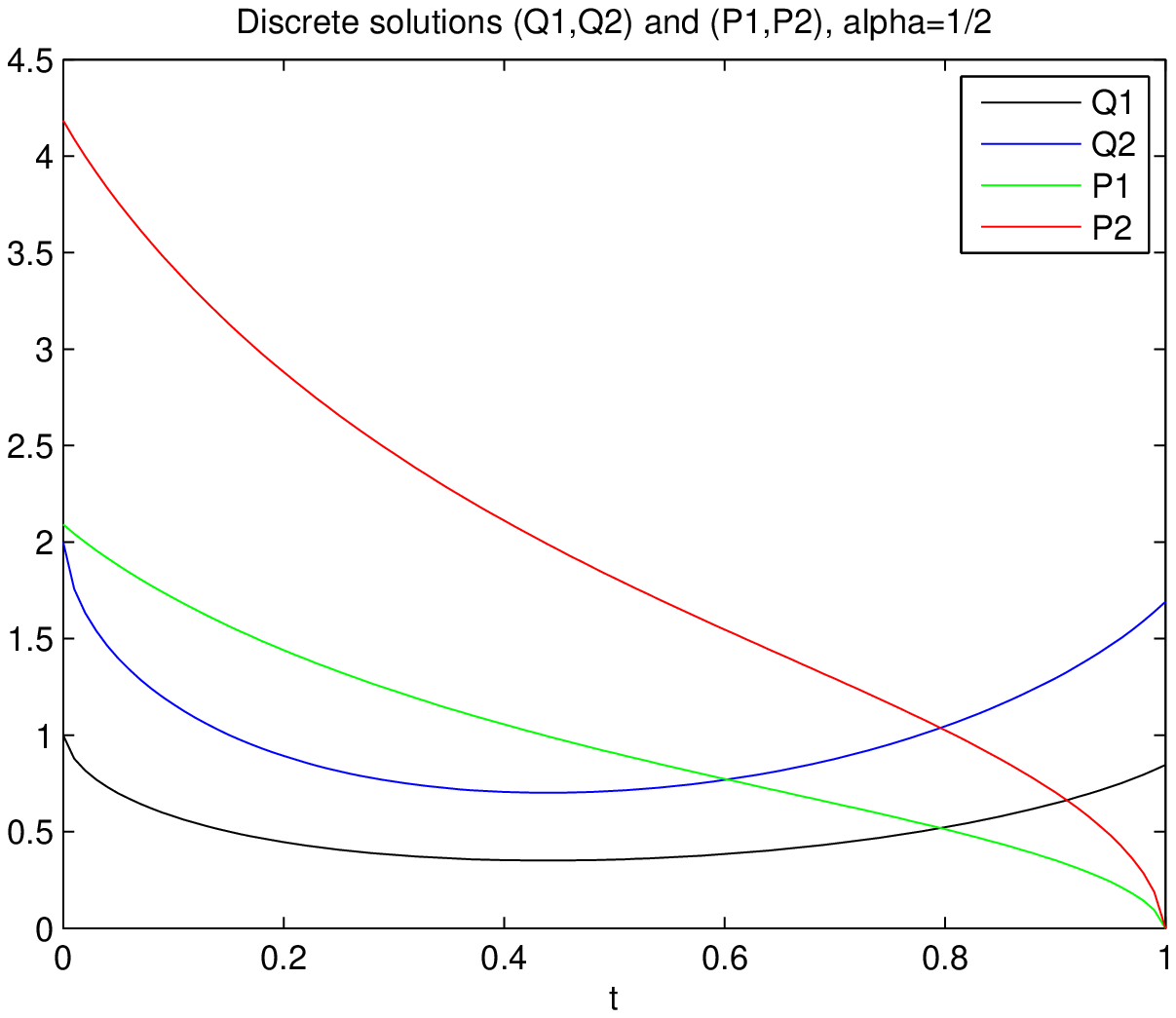} & \includegraphics[width=0.5\textwidth]{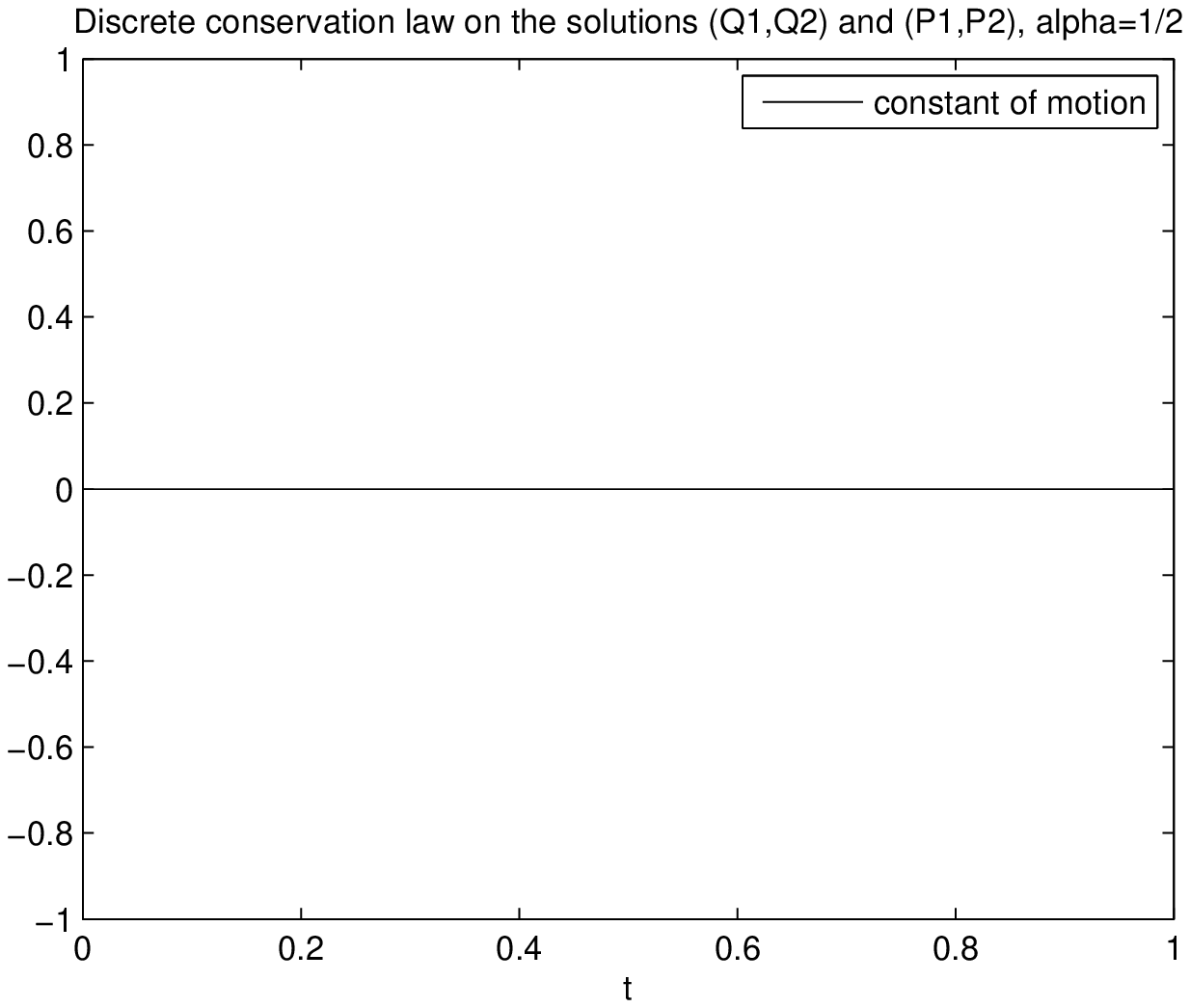}
\end{array}
\end{equation*}
\begin{equation*}
\begin{array}{cc}
\includegraphics[width=0.5\textwidth]{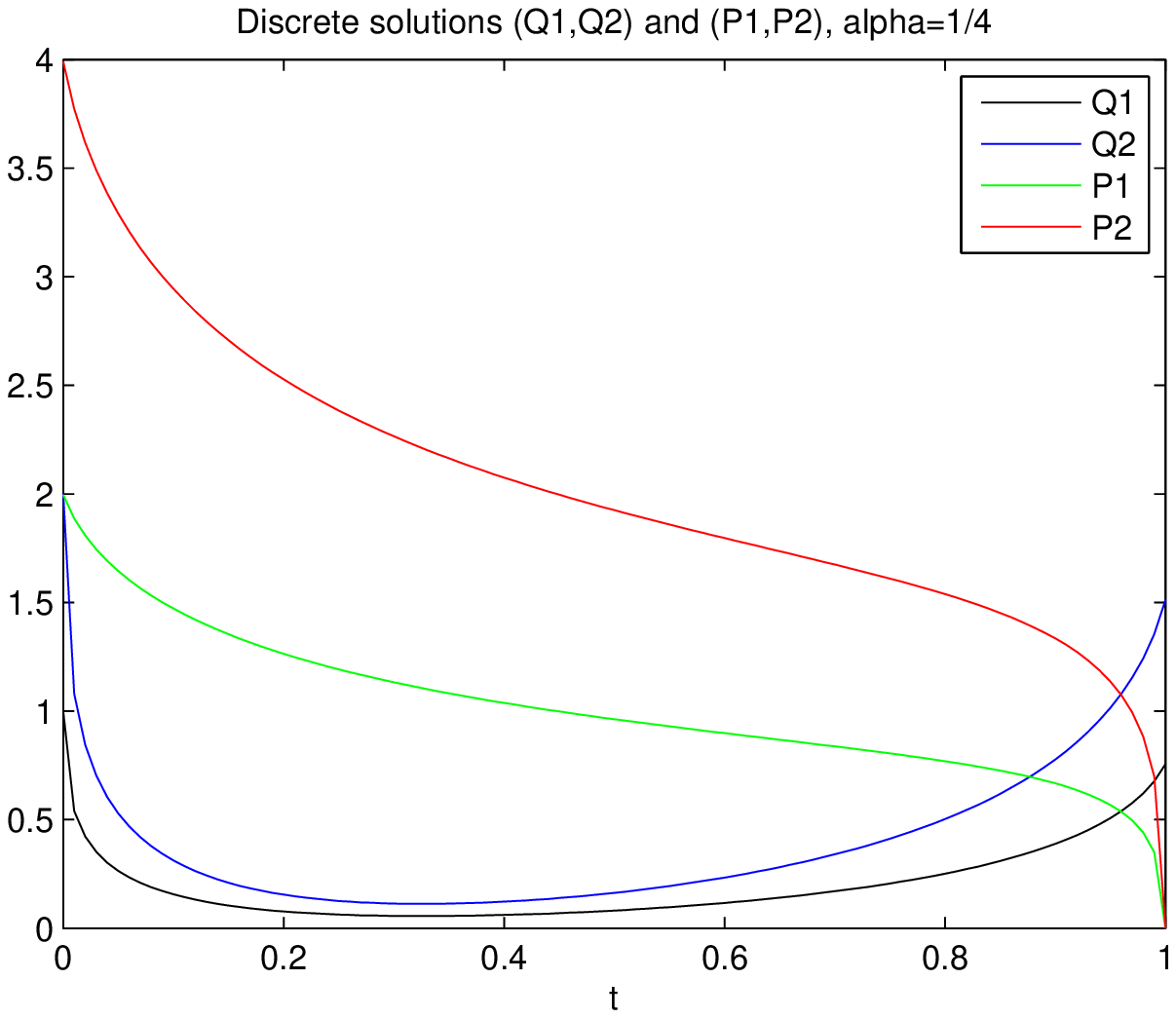} & \includegraphics[width=0.5\textwidth]{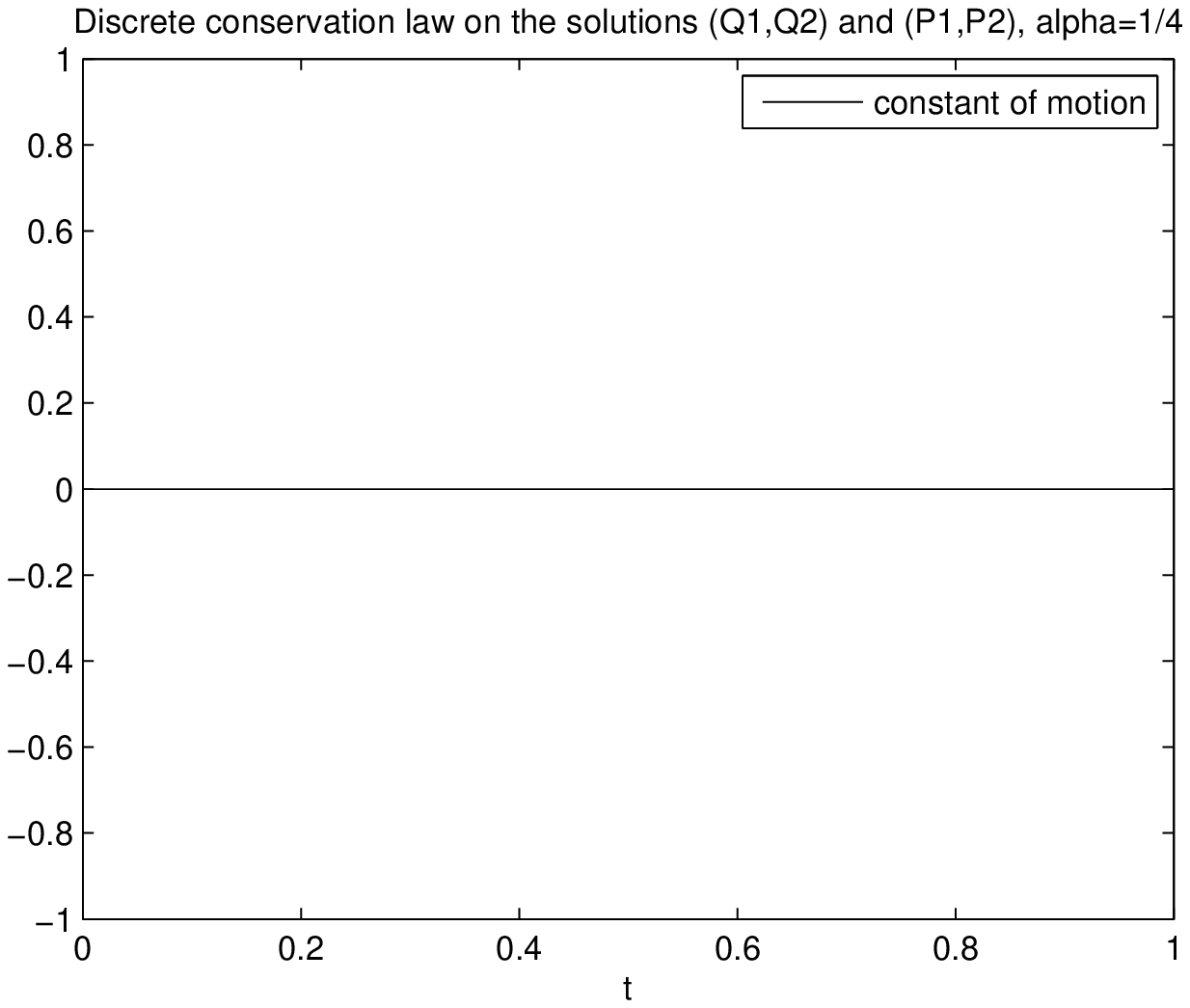}
\end{array}
\end{equation*}
As expected from Theorem \ref{thmdfnoether}, we obtain discrete constants of motion for this discrete fractional Pontryagin's system \eqref{eqpontsystfd} admitting a discrete symmetry and for any $\alpha=1$, $3/4$, $1/2$, $1/4$. In this specific example, the constant obtained is zero.
\end{example}

\appendix

\section{Appendix A}\label{appA}

\subsection{Result of stability of order $1$}\label{appAa}
In this section, we use the notations and definitions given in Sections \ref{section23} and \ref{section24}. Let us prove the following Lemma:
\begin{lemma}\label{lemappAa}
Let $\U$, $\bar{\U} \in (\R^m)^{N+1}$. Then, there exists a constant $C_1 \geq 0$ such that:
\begin{equation}
\forall \vert \eps \vert <1, \; \forall k=0,\ldots,N, \; \Vert Q^{\U+\eps \bar{\U},\alpha}_k - Q^{\U,\alpha}_k \Vert \leq C_1 \vert \eps \vert.
\end{equation}
\end{lemma}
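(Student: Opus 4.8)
The plan is to rewrite the discrete Cauchy problem \eqref{eqdcpq} as an explicit recursion in the index $k$ and then run a discrete Gr\"onwall-type estimate. First I would expand $\cDDM$ and use $\alpha_0 = 1$ to put \eqref{eqdcpq} in the fixed-point form already used in the proof of Theorem \ref{thmdfcl}:
\[
Q^{\U,\alpha}_k = A + h^\alpha f\big(Q^{\U,\alpha}_k,U_k,t_k\big) - \sum_{r=1}^{k-1}\alpha_r\big(Q^{\U,\alpha}_{k-r}-A\big),\qquad k=1,\ldots,N .
\]
For $\vert\eps\vert<1$ the perturbed control $\U+\eps\bar{\U}$ still satisfies the hypotheses of Theorem \ref{thmdfcl} — Condition \eqref{condf} holds with the same $M$, uniformly in the control variable, and \eqref{condh} gives $h^\alpha M<1$ — so $Q^{\U+\eps\bar{\U},\alpha}$ exists, is unique, and obeys the same recursion with $U_k$ replaced by $U_k+\eps\bar{U}_k$.

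Next, set $e_k := Q^{\U+\eps\bar{\U},\alpha}_k - Q^{\U,\alpha}_k$, so $e_0 = 0$, and subtract the two recursions:
\[
e_k = h^\alpha\Big[f\big(Q^{\U+\eps\bar{\U},\alpha}_k,U_k+\eps\bar{U}_k,t_k\big)-f\big(Q^{\U,\alpha}_k,U_k,t_k\big)\Big] - \sum_{r=1}^{k-1}\alpha_r\,e_{k-r}.
\]
I would split the bracket by inserting the intermediate term $f(Q^{\U,\alpha}_k,U_k+\eps\bar{U}_k,t_k)$: the first difference is $\le M\Vert e_k\Vert$ by \eqref{condf}, and the second is $\le D\vert\eps\vert$ where $D := \big(\max_{1\le k\le N}\Vert\bar{U}_k\Vert\big)\sup\Vert\partial f/\partial v\Vert$, the supremum being taken over the compact set $\{(Q^{\U,\alpha}_k,\,U_k+\theta\bar{U}_k,\,t_k):1\le k\le N,\ \theta\in[-1,1]\}$, which is finite because $f\in\CC^2$. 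Using $1-h^\alpha M>0$ from \eqref{condh}, this yields
\[
\Vert e_k\Vert \le \frac{1}{1-h^\alpha M}\Big(h^\alpha D\,\vert\eps\vert + \sum_{r=1}^{k-1}\vert\alpha_r\vert\,\Vert e_{k-r}\Vert\Big).
\]

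Finally I would conclude by induction on $k\in\{0,\ldots,N\}$: there exist constants $c_k\ge 0$, independent of $\eps$, with $\Vert e_k\Vert\le c_k\vert\eps\vert$ for all $\vert\eps\vert<1$; indeed $c_0=0$, and substituting $\Vert e_{k-r}\Vert\le c_{k-r}\vert\eps\vert$ into the last inequality forces the choice $c_k := \big(h^\alpha D+\sum_{r=1}^{k-1}\vert\alpha_r\vert\,c_{k-r}\big)/(1-h^\alpha M)$. The constant $C_1 := \max_{0\le k\le N}c_k$ then works. Since $N$ is fixed, the finitely many coefficients $\vert\alpha_r\vert$ only supply a bounded factor (one could even note $\sum_{r\ge 1}\vert\alpha_r\vert=1$, but this is not needed), so the argument is robust; the only delicate point will be the extraction of the uniform constant $D$, that is, controlling the variation of $f$ in its \emph{control} argument by a mean value / compactness argument, since the Lipschitz hypothesis \eqref{condf} only bears on the \emph{state} argument.
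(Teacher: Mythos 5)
Your proposal is correct and follows essentially the same route as the paper's proof: the same fixed-point recursion from \eqref{eqdcpq}, the same splitting of $f(Q^{\U+\eps\bar{\U},\alpha}_k,U_k+\eps\bar{U}_k,t_k)-f(Q^{\U,\alpha}_k,U_k,t_k)$ via the intermediate term, the bound on the control increment by a mean-value/compactness argument on $\partial f/\partial v$, absorption of the $h^\alpha M\Vert e_k\Vert$ term using \eqref{condh}, and the induction producing the constants $R_k$ (your $c_k$) with $C_1=\max_k R_k$. The only cosmetic difference is that the paper writes the memory sum up to $r=k$ (the $r=k$ term vanishes since $Q_0=A$), which changes nothing.
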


\begin{proof}
First, let us prove by induction that:
\begin{equation}\label{eqAa-1}
\forall k=0,\ldots,N, \; \exists R_k \geq 0, \; \forall \vert \eps \vert <1, \; \Vert Q^{\U+\eps \bar{\U},\alpha}_k - Q^{\U,\alpha}_k \Vert \leq R_k \vert \eps \vert .
\end{equation}
For $k=0$, let us take $R_0 = 0$ since $Q^{\U+\eps \bar{\U},\alpha}_0 = Q^{\U,\alpha}_0 = A$ for any $\abs{\eps} <1$. Let $k \in \{ 1,\ldots,N\}$ and let us assume that the result \eqref{eqAa-1} is satisfied for any $n=0,\ldots,k-1$. Let us prove that the result \eqref{eqAa-1} is then satisfied for $n=k$. Since $\Qa$ (resp. $\Q^{\U+\eps \bar{\U},\alpha}$) is solution of \eqref{eqdcpq} associated to $\U$ (resp. to $\U + \eps \bar{\U}$), we have: 
\begin{equation}
Q^{\U,\alpha}_k = h^\alpha f(Q^{\U,\alpha}_k,U_k,t_k)+A - \di \sum_{r=1}^{k} \alpha_{r} ( Q^{\U,\alpha}_{k-r} - A)
\end{equation}
and for any $\abs{\eps} < 1$:
\begin{equation}
Q^{\U+\eps \bar{\U},\alpha}_k = h^\alpha f(Q^{\U+\eps \bar{\U},\alpha}_k,U_k+\eps \bar{U}_k,t_k)+A - \di \sum_{r=1}^{k} \alpha_{r} ( Q^{\U+\eps \bar{\U},\alpha}_{k-r} - A).
\end{equation}
Consequently, for any $\abs{\eps} < 1$:
\begin{multline}
\Vert Q^{\U+\eps \bar{\U},\alpha}_k - Q^{\U,\alpha}_k \Vert \leq h^\alpha \Vert f(Q^{\U+\eps \bar{\U},\alpha}_k,U_k+\eps \bar{U}_k,t_k) - f(Q^{\U,\alpha}_k,U_k,t_k) \Vert \\ + \di \sum_{r=1}^{k} \vert \alpha_{r} \vert \Vert Q^{\U+\eps \bar{\U},\alpha}_{k-r} - Q^{\U,\alpha}_{k-r} \Vert,
\end{multline}
and then, with the induction hypothesis, we have for any $\abs{\eps} < 1$:
\begin{multline}
\Vert Q^{\U+\eps \bar{\U},\alpha}_k - Q^{\U,\alpha}_k \Vert \leq h^\alpha \Vert f(Q^{\U+\eps \bar{\U},\alpha}_k,U_k+\eps \bar{U}_k,t_k) - f(Q^{\U,\alpha}_k,U_k+\eps \bar{U}_k,t_k) \Vert \\ + h^\alpha \Vert f(Q^{\U,\alpha}_k,U_k+\eps \bar{U}_k,t_k) - f(Q^{\U,\alpha}_k,U_k,t_k) \Vert + \di \sum_{r=1}^{k} \vert \alpha_{r} \vert R_{k-r} \vert \eps \vert .
\end{multline}
Finally, using Condition \eqref{condf} and a Taylor's expansion of order $1$ with explicit remainder, we prove:
\begin{multline}
\forall \abs{\eps} < 1, \; \Vert Q^{\U+\eps \bar{\U},\alpha}_k - Q^{\U,\alpha}_k \Vert \leq  h^\alpha M \Vert Q^{\U+\eps \bar{\U},\alpha}_k - Q^{\U,\alpha}_k \Vert \\ + h^\alpha \abs{\eps} \left\Vert \dfrac{\partial f}{\partial v}(Q^{\U,\alpha}_k ,\xi^{\eps},t_k) \times \bar{U}_k \right\Vert + \di \sum_{r=1}^{k} \vert \alpha_{r} \vert R_{k-r} \vert \eps \vert,
\end{multline}
where $\xi^{\eps} \in [U_k,U_k + \eps \bar{U}_k] \subset [-M_1,M_1]^m $ with $M_1 \geq 0$ independent of $\abs{\eps} <1$. Hence, since $\partial f / \partial v $ is continuous, we can conclude that there exists $M_2 \geq 0$ such that:
\begin{equation}
\forall \abs{\eps} <1 ,\; \left\Vert \dfrac{\partial f}{\partial v}(Q^{\U,\alpha}_k ,\xi^{\eps},t_k) \times \bar{U}_k \right\Vert \leq M_2 .
\end{equation}
Consequently, since $h^\alpha M<1$ from Condition \eqref{condh}, we have:
\begin{equation}
\forall \abs{\eps} <1, \; \Vert Q^{\U+\eps \bar{\U},\alpha}_k - Q^{\U,\alpha}_k \Vert \leq \dfrac{\vert \eps \vert}{1-h^\alpha M} \left( h^\alpha M_2 + \di \sum_{r=1}^{k} R_{k-r} \vert \alpha_{r} \vert \right).
\end{equation}
We then define $R_k := ( h^\alpha M_2 + \sum_{r=1}^{k} R_{k-r} \vert \alpha_{r} \vert )/(1-h^\alpha M)$ independent $\abs{\eps} <1 $ which concludes the induction. To complete the proof, we have just to define $C_1 = \max \{ R_k, \; k=0,\ldots,N \}$.
\end{proof}

\subsection{Result of stability of order $2$}\label{appAb}
In this section, we use the notations and definitions given in Sections \ref{section23} and \ref{section24}. We prove the following Lemma with the help of Lemma \ref{lemappAa}:
\begin{lemma}\label{lemappAb}
Let $\U$, $\bar{\U} \in (\R^m)^{N+1}$. Then, there exists a constant $C \geq 0$ such that:
\begin{equation}
\forall \vert \eps \vert < 1, \; \forall k=0,\ldots,N , \; \Vert Q^{\U+\eps \bar{\U},\alpha}_k - Q^{\U,\alpha}_k - \eps \bar{Q}_k \Vert  \leq C \eps^2 ,
\end{equation}
where $\bar{\Q}$ is the unique solution of the following linearised discrete fractional Cauchy problem:
\begin{equation}\tag{LCP${}^\alpha_{\bar{\Q}}$}
 \left\lbrace \begin{array}{l}
 		\cDDM \bar{\Q} = \dfrac{\partial f}{\partial x} (\Qa,\U,\T) \times \bar{\Q} + \dfrac{\partial f}{\partial v} (\Qa,\U,\T) \times \bar{\U} \\[10pt]
 		\bar{Q}_0 = 0.
        \end{array}
\right. 
\end{equation}
Its existence and its uniqueness are provided by Theorem \ref{thmdfcl} and Conditions \eqref{condf} and \eqref{condh}.
\end{lemma}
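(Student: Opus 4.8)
I would argue by induction on $k$, mirroring the proof of Lemma \ref{lemappAa}. As there, I first establish that for each $k=0,\dots,N$ there is a constant $C_k \geq 0$ with $\norm{Q^{\U+\eps\bar{\U},\alpha}_k - Q^{\U,\alpha}_k - \eps\bar{Q}_k} \leq C_k \eps^2$ for all $\abs{\eps}<1$, and then set $C := \max\{C_k : k=0,\dots,N\}$. Write $w_k^\eps := Q^{\U+\eps\bar{\U},\alpha}_k - Q^{\U,\alpha}_k - \eps\bar{Q}_k$. The base case $k=0$ is immediate since $Q^{\U+\eps\bar{\U},\alpha}_0 = Q^{\U,\alpha}_0 = A$ and $\bar{Q}_0 = 0$, so $w_0^\eps = 0$ and $C_0 = 0$ works. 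For the inductive step, fix $k\in\{1,\dots,N\}$, assume the bound for indices $0,\dots,k-1$, and unfold the defining recursions of \eqref{eqdcpq} for the controls $\U$ and $\U+\eps\bar{\U}$ (exactly as in the proof of Lemma \ref{lemappAa}) together with the recursion satisfied by $\eps\bar{\Q}$ coming from \eqref{eqldcpq} (recall $\bar{Q}_0 = 0$, so $\cDDM\bar{\Q} = \DDM\bar{\Q}$ and $\alpha_0 = 1$). Subtracting, one gets
\begin{equation*}
w_k^\eps = h^\alpha\Big[ f\big(Q^{\U+\eps\bar{\U},\alpha}_k, U_k+\eps\bar{U}_k, t_k\big) - f\big(Q^{\U,\alpha}_k, U_k, t_k\big) - \eps\,\frac{\partial f}{\partial x}\big(Q^{\U,\alpha}_k, U_k, t_k\big)\bar{Q}_k - \eps\,\frac{\partial f}{\partial v}\big(Q^{\U,\alpha}_k, U_k, t_k\big)\bar{U}_k \Big] - \di\sum_{r=1}^k \alpha_r\, w_{k-r}^\eps .
\end{equation*}

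The heart of the argument is to show that the bracketed term equals $\frac{\partial f}{\partial x}(Q^{\U,\alpha}_k, U_k, t_k)\, w_k^\eps$ plus a quantity of size $O(\eps^2)$, uniformly in $\abs{\eps}<1$. To this end I would Taylor-expand $f$ to order $2$ with explicit remainder about $(Q^{\U,\alpha}_k, U_k, t_k)$ in the increment $\big(Q^{\U+\eps\bar{\U},\alpha}_k - Q^{\U,\alpha}_k,\, \eps\bar{U}_k\big)$: the zeroth-order term cancels $f(Q^{\U,\alpha}_k, U_k, t_k)$, the first-order term is $\frac{\partial f}{\partial x}(\cdot)\big(Q^{\U+\eps\bar{\U},\alpha}_k - Q^{\U,\alpha}_k\big) + \frac{\partial f}{\partial v}(\cdot)\eps\bar{U}_k$, whose $\partial_v$-part cancels the subtracted $\eps\frac{\partial f}{\partial v}(\cdot)\bar{U}_k$ and whose $\partial_x$-part combines with $-\eps\frac{\partial f}{\partial x}(\cdot)\bar{Q}_k$ to give exactly $\frac{\partial f}{\partial x}(\cdot)w_k^\eps$. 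The second-order remainder is bounded by (half) the supremum of the second derivatives of $f$ over a fixed compact set, times $\big(\norm{Q^{\U+\eps\bar{\U},\alpha}_k - Q^{\U,\alpha}_k} + \abs{\eps}\norm{\bar{U}_k}\big)^2$; invoking Lemma \ref{lemappAa} to get $\norm{Q^{\U+\eps\bar{\U},\alpha}_k - Q^{\U,\alpha}_k} \leq C_1\abs{\eps}$, this remainder is $\leq M_3\,\eps^2$ for a suitable $M_3 \geq 0$ independent of $k$ and of $\abs{\eps}<1$. Using moreover $\norm{\frac{\partial f}{\partial x}(x,v,t)} \leq M$ for all $(x,v,t)$ (obtained by differentiating Condition \eqref{condf}), we arrive at
\begin{equation*}
\norm{w_k^\eps} \leq h^\alpha M\,\norm{w_k^\eps} + h^\alpha M_3\,\eps^2 + \di\sum_{r=1}^k \abs{\alpha_r}\,\norm{w_{k-r}^\eps}.
\end{equation*}
Since $h^\alpha M < 1$ by Condition \eqref{condh}, the term $h^\alpha M\norm{w_k^\eps}$ is absorbed into the left-hand side, and the induction hypothesis bounds each $\norm{w_{k-r}^\eps}$ by $C_{k-r}\eps^2$; this yields $\norm{w_k^\eps} \leq C_k\eps^2$ with $C_k := \big(h^\alpha M_3 + \sum_{r=1}^k \abs{\alpha_r} C_{k-r}\big)/(1-h^\alpha M)$, independent of $\abs{\eps}<1$, which closes the induction. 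Finally $C := \max\{C_k : k=0,\dots,N\}$ gives the claim.

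The main obstacle is the uniform control of the second-order Taylor remainder: one must verify that all the points at which $f$ and its derivatives are evaluated — for $k$ in the finite set $\{0,\dots,N\}$, $\abs{\eps}<1$, and along the segments arising in the remainder — remain in one fixed compact subset of $\R^d \times \R^m \times [a,b]$, so that $M_3$ may be chosen once and for all. This is precisely where the $\CC^2$-regularity of $f$ and the first-order estimate of Lemma \ref{lemappAa} (bounding $Q^{\U+\eps\bar{\U},\alpha}_k$ within $C_1$ of $Q^{\U,\alpha}_k$) are used; everything else is the same Gronwall-type finite induction as in Lemma \ref{lemappAa}.
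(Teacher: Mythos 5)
Your proposal is correct and follows essentially the same route as the paper: the same finite induction with a second-order Taylor expansion of $f$ with explicit remainder, uniform control of the remainder via the compactness provided by Lemma \ref{lemappAa} and the $\CC^2$-regularity of $f$, and absorption of the $h^\alpha \frac{\partial f}{\partial x}$-term using Condition \eqref{condh}. The only cosmetic difference is that the paper absorbs that term with the cruder coefficient $2h^\alpha M$ (still $<1$ by \eqref{condh}), whereas you use $h^\alpha M<1$, which is equally valid.
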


\begin{proof}
We proceed in the same manner that for Lemma \ref{lemappAa}. Let us prove by induction that:
\begin{equation}\label{eqAb-1}
\forall k=0,\ldots,N , \; \exists R_k \geq 0, \; \forall \vert \eps \vert <1, \; \Vert Q^{\U+\eps \bar{\U},\alpha}_k -  Q^{\U,\alpha}_k - \eps \bar{Q}_k \Vert \leq R_k \eps^2.
\end{equation}
For $k=0$, let us take $R_0 = 0$ since $Q^{\U+\eps \bar{\U},\alpha}_0 = Q^{\U,\alpha}_0=A$ for any $\abs{\eps} < 1$ and $\bar{Q}_0 = 0$. Let $k \in \{1,\ldots,N \}$ and let us assume that the result \eqref{eqAb-1} is satisfied for any $n=0,\ldots,k-1$. Let us prove that the result \eqref{eqAb-1} is then satisfied for $n=k$. Since $\Qa$ (resp. $\Q^{\U+\eps \bar{\U},\alpha}$) is solution of \eqref{eqldcpq} associated to $\U$ (resp. to $\U + \eps \bar{\U}$) and since $\bar{\Q}$ is solution of \eqref{eqldcpq}, we have with a Taylor's expansion of order $2$ with explicit remainder:
\begin{multline}\label{eqAb-2}
\forall \abs{\eps} < 1, \; Q^{\U+\eps \bar{\U},\alpha}_k - Q^{\U,\alpha}_k - \eps \bar{Q}_k = h^\alpha \dfrac{\partial f}{\partial x} (Q^{\U,\alpha}_k,U_k,t_k) \times ( Q^{\U+\eps \bar{\U},\alpha}_k - Q^{\U,\alpha}_k - \eps \bar{Q}_k ) \\ + h^\alpha \left( \dfrac{1}{2} \nabla^2 f(\xi_1^\eps,\xi_2^\eps,t_k) (Q^{\U+\eps \bar{\U},\alpha}_k - Q^{\U,\alpha}_k, \eps \bar{U}_k,0)^2 \right) - \di \sum_{r=1}^k \alpha_r ( Q^{\U+\eps \bar{\U},\alpha}_{k-r} - Q^{\U,\alpha}_{k-r} - \eps \bar{Q}_{k-r} ),
\end{multline}
where:
\begin{itemize}
\item $\xi_1^\eps \in [Q^{\U,\alpha}_{k},Q^{\U+\eps \bar{\U},\alpha}_{k}] \subset [\Vert Q^{\U,\alpha}_{k} \Vert - C_1,\Vert Q^{\U,\alpha}_{k} \Vert + C_1]$ from Lemma \ref{lemappAa}. Then, $\xi_1^\eps \in (-M_1,M_1]^d$ with $M_1 \geq 0$ independent of $\abs{\eps} < 1$;
\item $\xi_2^\eps \in [U_{k},U_{k}+ \eps \bar{U}_k] \subset [-M_2,M_2]^m$ with $M_2 \geq 0$ independent of $\abs{\eps} < 1$.
\end{itemize}
Since $\nabla^2 f (\cdot,\cdot,t_k)$ is continuous, we conclude that there exists $M_3 \geq 0$ such that:
\begin{multline}
\forall \abs{\eps} < 1, \; \left\Vert \dfrac{1}{2} \nabla^2 f(\xi_1^\eps,\xi_2^\eps,t_k) (Q^{\U+\eps \bar{\U},\alpha}_k - Q^{\U,\alpha}_k, \eps \bar{U}_k,0 )^2 \right\Vert \\ \leq M_3 ( \Vert Q^{\U+\eps \bar{\U},\alpha}_k - Q^{\U,\alpha}_k \Vert^2 + 2 \Vert Q^{\U+\eps \bar{\U},\alpha}_k - Q^{\U,\alpha}_k \Vert \Vert \eps \bar{U}_k \Vert + \Vert \eps \bar{U}_k \Vert^2).
\end{multline}
Hence, from Lemma \ref{lemappAa}, there exists $M_4 \geq 0$ such that:
\begin{equation}\label{eqAb-3}
\forall \abs{\eps} < 1, \; \left\Vert \dfrac{1}{2} \nabla^2 f(\xi_1^\eps,\xi_2^\eps,t_k) (Q^{\U+\eps \bar{\U},\alpha}_k - Q^{\U,\alpha}_k, \eps \bar{U}_k,0 )^2 \right\Vert \leq M_4 \eps^2.
\end{equation}
From Equality \eqref{eqAb-2} and Condition \eqref{condf}, Inequality \eqref{eqAb-3} and the induction hypothesis, we obtain:
\begin{multline}
\forall \abs{\eps} < 1, \; \Vert Q^{\U+\eps \bar{\U},\alpha}_k - Q^{\U,\alpha}_k - \eps \bar{Q}_k \Vert \leq 2 h^\alpha M \Vert Q^{\U+\eps \bar{\U},\alpha}_k - Q^{\U,\alpha}_k - \eps \bar{Q}_k \Vert  \\ + \di \sum_{r=1}^k \vert \alpha_r \vert R_{k-r} \eps^2 + h^\alpha M_4 \eps^2.
\end{multline}
Finally, since $2 h^\alpha M <1$ from Condition \eqref{condh}, we have:
\begin{equation}
\forall \abs{\eps} < 1,\; \Vert Q^{\U+\eps \bar{\U},\alpha}_k - Q^{\U,\alpha}_k - \eps \bar{Q}_k \Vert \leq \dfrac{\eps^2}{1-2 h^\alpha M} \left( h^\alpha M_4 + \di \sum_{r=1}^{k} R_{k-r} \vert \alpha_{r} \vert \right).
\end{equation}
We then define $R_k := ( h^\alpha M_4 + \sum_{r=1}^{k} R_{k-r} \vert \alpha_{r} \vert )/(1-2 h^\alpha M)$ which concludes the induction. In order to complete the proof, we just have to define $C := \max \{ R_k, \; k=0,\ldots,N \}$.
\end{proof}

\subsection{Proof of Lemma \ref{lem2}}\label{appAc}
In this section, we prove Lemma \ref{lem2} and consequently, we use notations and definitions given in Sections \ref{section23} and \ref{section24}. \\

Let $\U$, $\bar{\U} \in (\R^m )^{N+1}$ and $\bar{\Q} \in ( \R^d )^{N+1}$ the unique solution of \eqref{eqldcpq}. From Lemma \ref{lemappAb}, we have:
\begin{equation}\label{eqAc-1}
\forall k=0,\ldots,N , \; \forall \vert \eps \vert < 1, \; Q^{\U+\eps \bar{\U},\alpha}_k = Q^{\U,\alpha}_k + \eps \bar{Q}_k + H^{\eps}_k,
\end{equation}
where for any $k=0,\ldots,N$ and for any $\abs{\eps} <1$, $\Vert H^{\eps}_k \Vert \leq C \eps^2$. In particular, there exists $M_1 \geq 0$ such that:
\begin{equation}
\forall k=0,\ldots,N , \; \forall \vert \eps \vert < 1, \; [Q^{\U,\alpha}_k,Q^{\U+\eps \bar{\U},\alpha}_k] \subset [-M_1,M_1]^d.
\end{equation}
In the same way, there exists $M_2 \geq 0$ such that:
\begin{equation}
\forall k=0,\ldots,N , \; \forall \vert \eps \vert < 1, [U_k,U_k+\eps \bar{U}_k] \subset [-M_2,M_2]^m.
\end{equation}
We have:
\begin{equation}\label{eqAc-2}
\forall \vert \eps \vert < 1, \; \LL^\alpha_h (\U+\eps \bar{\U})-\LL^\alpha_h(\U) = h \di \sum_{k=1}^N \Big[ L(Q^{\U+\eps \bar{\U},\alpha}_k,U_k+\eps \bar{U}_k,t_k) - L(Q^{\U,\alpha}_k,U_k,t_k) \Big] .
\end{equation}
With a Taylor's expansion of order $2$ with explicit remainder, we have for any $\vert \eps \vert < 1$ and any $k=0,\ldots,N$:
\begin{multline}\label{eqAc-3}
\Big\vert L(Q^{\U+\eps \bar{\U},\alpha}_k,U_k+\eps \bar{U}_k,t_k) - L(Q^{\U,\alpha}_k,U_k,t_k) - \eps \dfrac{\partial L}{\partial x} (Q^{\U,\alpha}_k,U_k,t_k) \cdot \bar{Q}_k - \eps \dfrac{\partial L}{\partial v} (Q^{\U,\alpha}_k,U_k,t_k) \cdot \bar{U}_k \Big\vert \\ \leq \Big\vert \dfrac{\partial L}{\partial x} (Q^{\U,\alpha}_k,U_k,t_k) \cdot H^{\eps}_k \Big\vert  + \Big\vert \dfrac{1}{2} \nabla^2 L (\xi^\eps_1,\xi^\eps_2,t_k ) (\eps \bar{Q}_k+H^\eps_k,\eps \bar{U}_k,0\big)^2 \Big\vert,
\end{multline}
where $\xi^\eps_1 \in [Q^{\U,\alpha}_k,Q^{\U+\eps \bar{\U},\alpha}_k] \subset [-M_1,M_1]^d$ and $\xi^\eps_2 \in [U_k,U_k+\eps \bar{U}_k] \subset [-M_2,M_2]^m$. Since $L$ is of class $\CC^2$, we obtain easily that there exists $M_3 \geq 0$ such that for any $\vert \eps \vert < 1$ and any $k=0,\ldots,N$: 
\begin{multline}\label{eqAc-4}
\Big\vert L(Q^{\U+\eps \bar{\U},\alpha}_k,U_k+\eps \bar{U}_k,t_k) - L(Q^{\U,\alpha}_k,U_k,t_k) \\ - \eps \dfrac{\partial L}{\partial x} (Q^{\U,\alpha}_k,U_k,t_k) \cdot \bar{Q}_k - \eps \dfrac{\partial L}{\partial v} (Q^{\U,\alpha}_k,U_k,t_k) \cdot \bar{U}_k \Big\vert \leq M_3 \eps^2.
\end{multline}
Consequently, we have for any $0 < \vert \eps \vert < 1$ and any $k=0,\ldots,N$:
\begin{multline}
\left\vert \dfrac{L(Q^{\U+\eps \bar{\U},\alpha}_k,U_k+\eps \bar{U}_k,t_k) - L(Q^{\U,\alpha}_k,U_k,t_k)}{\eps} \right. \\ \left.  - \dfrac{\partial L}{\partial x} (Q^{\U,\alpha}_k,U_k,t_k) \cdot \bar{Q}_k -  \dfrac{\partial L}{\partial v} (Q^{\U,\alpha}_k,U_k,t_k) \cdot \bar{U}_k \right\vert \leq M_3 \eps.
\end{multline}
Hence:
\begin{equation}
\lim\limits_{\eps \rightarrow 0} \dfrac{\LL^\alpha_h (\U+\eps \bar{\U})-\LL^\alpha_h(\U)}{\eps} =  h \di \sum_{k=1}^N \left[ \dfrac{\partial L}{\partial x} (Q^{\U,\alpha}_k,U_k,t_k) \cdot \bar{Q}_k + \dfrac{\partial L}{\partial v} (Q^{\U,\alpha}_k,U_k,t_k) \cdot \bar{U}_k \right].
\end{equation}
The proof is completed.

\subsection{Proof of Lemma \ref{lemdtransform}}\label{appAd} 
In this section, we use the notations and definitions given in Section \ref{section4}. Let us prove Lemma \ref{lemdtransform}. \\

Let $\G^1$, $\G^2 \in \RN$ satisfying $G^2_N = 0$. First, let us denote for any $k=1,\ldots,N$:
\begin{equation}
X_k := h^\alpha \Big[ G^1_k \cdot \sigma^{-1} ( \DDP \G^2 )_k - (\cDDM \G^1)_k \cdot \sigma^{-1} (\G^2 )_k \Big].
\end{equation}
Our aim is to write $\boldsymbol{X}$ as an explicit discrete derivative (\textit{i.e.} as $\Delta^1_-$ of an explicit quantity). We have for any $k=1,\ldots,N$:
\begin{equation}
X_k =  G^1_k \cdot \left( \di \sum_{r=0}^{N+1-k} \alpha_r G^2_{k+r-1} \right) - \left( \di \sum_{r=0}^k \alpha_r (G^1_{k-r} - G^1_0 ) \right) \cdot G^2_{k-1} = \alpha_1 h (\Delta^1_- \G^1 \cdot \G^2 )_k  + Y_k + Z_k,
\end{equation}
where for any $k=1,\ldots,N$:
\begin{equation}
Y_k := \left( \di \sum_{r=0}^{k} \alpha_r \right) G^1_0 \cdot G^2_{k-1} = \beta^\alpha_k G^1_0 \cdot G^2_{k-1}
\end{equation}
and
\begin{equation}
Z_k := \left[ G^1_k \cdot \left(  \di \sum_{r=2}^{N-k} \alpha_r G^2_{k+r-1} \right) -\left( \di \sum_{r=2}^{k} \alpha_r G^1_{k-r} \right) \cdot G^2_{k-1} \right].
\end{equation}
Our aim is then to write $\boldsymbol{Y}$ and $\boldsymbol{Z}$ as explicit discrete derivatives. We then define for any $i=0,\ldots,N$, $V_i := h \sum_{r=1}^i Y_r$ and $W_i := h \sum_{j=1}^i Z_j$. Hence, we have $\Delta^1_- \boldsymbol{V} = \boldsymbol{Y}$ and $\Delta^1_- \boldsymbol{W} = \boldsymbol{Z}$ and then, $\boldsymbol{X} = \Delta^1_- (\alpha_1 h \G^1 \cdot \G^2 + \boldsymbol{V} + \boldsymbol{W})$. Our aim is then to explicit $\boldsymbol{V}$ and $\boldsymbol{W}$. For any $i=0,\ldots,N$, we have:
\begin{equation}
V_i = h \di \sum_{r=1}^i \beta^\alpha_r G^1_0 \cdot G^2_{r-1} = h \di \sum_{r=1}^i \beta^\alpha_r G^1_0 \cdot \sigma^{r-1} (\G^2)_{0} = h \di \sum_{r=1}^N \di \sum_{j=0}^N \beta^\alpha_r C_r (i,j) G^1_j \cdot \sigma^{r-1} (\G^2)_{j}.
\end{equation}
For any $i=0,\ldots,N$, we have:
\begin{eqnarray}
W_i & = & h \di \sum_{j=1}^i \left[ G^1_j \cdot \left( \di \sum_{r=2}^{N-j} \alpha_r G^2_{j+r-1} \right) -  \left( \di \sum_{r=2}^{j} \alpha_r G^1_{j-r} \right) \cdot G^2_{j-1}  \right] \\
& = & h \di \sum_{j=1}^i \sum_{r=2}^{N-j} \alpha_r G^1_j \cdot G^2_{j+r-1} - h \sum_{j=2}^i \sum_{r=2}^{j} \alpha_r G^1_{j-r} \cdot G^2_{j-1} \\
& = & h \di \sum_{j=1}^i \sum_{r=2}^{N-j} \alpha_r G^1_j \cdot \sigma^{r-1}(\G^2)_j - h \sum_{r=2}^i \sum_{j=r}^{i} \alpha_r G^1_{j-r} \cdot G^2_{j-1} \\
& = & h \di \sum_{j=1}^i \sum_{r=2}^{N-j} \alpha_r G^1_j \cdot \sigma^{r-1}(\G^2)_j - h \sum_{r=2}^i \sum_{j=0}^{i-r} \alpha_r G^1_{j} \cdot \sigma^{r-1}(\G^2)_j.
\end{eqnarray}
The following equality holds for any $r=2,\ldots,N$ and any $i$, $j=0,\ldots,N$:
\begin{multline}
\delta_{\{ 1 \leq j \leq i \}} \delta_{\{ 2 \leq r \leq N-j \}} - \delta_{\{ 2 \leq r \leq i \}} \delta_{\{ 0 \leq j \leq i-r \}} \\ = \delta_{\{1 \leq i \leq N-1\}} \delta_{\{1 \leq j \leq N-r\}} \delta_{\{0 \leq i-j \leq r-1\}} - \delta_{\{j=0\}} \delta_{\{r \leq i\}}.
\end{multline}
Consequently, we have for any $i=0,\ldots,N$:
\begin{equation}
W_i = h \sum_{r=2}^{N} \sum_{j=0}^N \alpha_r B_r (i,j)  G^1_{j} \cdot \sigma^{r-1}(\G^2)_j.
\end{equation}
Finally, we have for any $i=0,\ldots,N$:
\begin{equation}
\alpha_1 h G^1_i \cdot G^2_i + V_i + W_i = h \sum_{r=1}^N \sum_{j=0}^N A_r (i,j) G^1_j \cdot \sigma^{r-1} (\G^2 )_j.
\end{equation}
Finally, the following equality holds:
\begin{equation}
\boldsymbol{X} = h \Delta^1_- \Big[ \di \sum_{r=1}^N A_r \times \big( \G^1 \cdot \sigma^{r-1} (\G^2 ) \big) \Big],
\end{equation}
which concludes the proof. \\

Now, let us see some examples of matrices $A_r \in \mathcal{M}_{N+1}$ for $N=5$:
\begin{equation*}
A_1 = \left(
\begin{array}{cccccc}
\alpha_1 & 0 & 0 & 0 & 0 & 0 \\
\beta^\alpha_1 & \alpha_1& 0 & 0 & 0 & 0 \\
\beta^\alpha_1 & 0& \alpha_1& 0 & 0 & 0 \\
\beta^\alpha_1 & 0 & 0& \alpha_1& 0 & 0 \\
\beta^\alpha_1 & 0 & 0 & 0& \alpha_1 & 0 \\
\beta^\alpha_1 & 0 & 0 & 0& 0 & \alpha_1 
\end{array} \right) , 
\; A_2 = \left(
\begin{array}{cccccc}
0 & 0 & 0 & 0 & 0 & 0 \\
0 & \alpha_2& 0 & 0 & 0 & 0 \\
\beta^\alpha_2 - \alpha_2 & \alpha_2& \alpha_2& 0 & 0 & 0 \\
\beta^\alpha_2 - \alpha_2& 0 & \alpha_2& \alpha_2& 0 & 0 \\
\beta^\alpha_2 - \alpha_2& 0 & 0 & \alpha_2& 0 & 0 \\
\beta^\alpha_2 - \alpha_2& 0 & 0 & 0& 0 & 0 
\end{array} \right) , 
\end{equation*}
\begin{equation*}
A_3 = \left(
\begin{array}{cccccc}
0 & 0 & 0 & 0 & 0 & 0 \\
0 & \alpha_3& 0 & 0 & 0 & 0 \\
0 & \alpha_3& \alpha_3& 0 & 0 & 0 \\
\beta^\alpha_3 - \alpha_3 & \alpha_3& \alpha_3& 0 & 0 & 0 \\
\beta^\alpha_3 - \alpha_3 & 0 & \alpha_3& 0 & 0 & 0 \\
\beta^\alpha_3 - \alpha_3 & 0 & 0& 0 & 0 & 0 
\end{array} \right) , 
\; A_4 = \left(
\begin{array}{cccccc}
0 & 0 & 0 & 0 & 0 & 0 \\
0 & \alpha_4& 0 & 0 & 0 & 0 \\
0 & \alpha_4& 0 & 0 & 0 & 0 \\
0 & \alpha_4& 0 & 0 & 0 & 0 \\
\beta^\alpha_4 - \alpha_4 & \alpha_4& 0 & 0 & 0 & 0 \\
\beta^\alpha_4 - \alpha_4 & 0 & 0 & 0 & 0 & 0 
\end{array} \right) 
\end{equation*}
and
\begin{equation*}
A_5 = \left(
\begin{array}{cccccc}
0 & 0 & 0 & 0 & 0 & 0 \\
0 & 0& 0 & 0 & 0 & 0 \\
0 & 0& 0& 0 & 0 & 0 \\
0 & 0& 0& 0 & 0 & 0 \\
0 & 0 & 0& 0 & 0 & 0 \\
\beta^\alpha_5 - \alpha_5 & 0 & 0& 0 & 0 & 0 
\end{array} \right) . 
\end{equation*}

\bibliographystyle{plain}

\begin{thebibliography}{10}

\bibitem{agra}
O.P. Agrawal.
\newblock Formulation of {E}uler-{L}agrange equations for fractional
  variational problems.
\newblock {\em J. Math. Anal. Appl.}, 272(1):368--379, 2002.

\bibitem{agra2}
O.P. Agrawal.
\newblock A general formulation and solution scheme for fractional optimal
  control problems.
\newblock {\em Nonlinear Dynam.}, 38(1-4):323--337, 2004.

\bibitem{agra3}
O.P. Agrawal.
\newblock A formulation and numerical scheme for fractional optimal control
  problems.
\newblock {\em J. Vib. Control}, 14(9-10):1291--1299, 2008.

\bibitem{agra4}
O.P. Agrawal.
\newblock A general finite element formulation for fractional variational
  problems.
\newblock {\em J. Math. Anal. Appl.}, 337(1):1--12, 2008.

\bibitem{alme}
R.~Almeida, A.B. Malinowska, and D.F.M. Torres.
\newblock A fractional calculus of variations for multiple integrals with
  application to vibrating string.
\newblock {\em J. Math. Phys.}, 51(3):033503, 12, 2010.

\bibitem{bagl}
R.~L. Bagley and R.~A. Calico.
\newblock Fractional order state equations for the control of viscoelastically
  damped structures.
\newblock {\em Journal of Guidance, Control, and Dynamics}, 14:304--311, 1991.

\bibitem{bale}
D.~Baleanu, O.~Defterli, and O.P. Agrawal.
\newblock A central difference numerical scheme for fractional optimal control
  problems.
\newblock {\em J. Vib. Control}, 15(4):583--597, 2009.

\bibitem{bale2}
D.~Baleanu and S.I. Muslih.
\newblock Lagrangian formulation of classical fields within
  {R}iemann-{L}iouville fractional derivatives.
\newblock {\em Phys. Scripta}, 72(2-3):119--121, 2005.

\bibitem{bour7}
L.~Bourdin.
\newblock A class of fractional optimal control problems and fractional
  {P}ontryagin's systems. {E}xistence of a fractional {N}oether's theorem.
\newblock {\em preprint arXiv:1203.1422v1 [math.OC]}.

\bibitem{bour2}
L.~Bourdin and J.~Cresson.
\newblock A continuous/discrete fractional {N}oether's theorem.
\newblock {\em preprint arXiv:1203.1206v1 [math.DS]}.

\bibitem{bour}
L.~Bourdin, J.~Cresson, I.~Greff, and P.~Inizan.
\newblock Variational integrators on fractional {L}agrangian systems in the
  framework of discrete embeddings.
\newblock {\em preprint arXiv:1103.0465v1 [math.DS]}.

\bibitem{comt}
F.~Comte.
\newblock Op\'erateurs fractionnaires en \'econom\'etrie et en finance.
\newblock {\em Pr\'epublication MAP5}, 2001.

\bibitem{deft}
O.~Defterli.
\newblock A numerical scheme for two-dimensional optimal control problems with
  memory effect.
\newblock {\em Comput. Math. Appl.}, 59(5):1630--1636, 2010.

\bibitem{diet}
K.~Diethelm.
\newblock {\em The analysis of fractional differential equations}, volume 2004
  of {\em Lecture Notes in Mathematics}.
\newblock Springer-Verlag, Berlin, 2010.
\newblock An application-oriented exposition using differential operators of
  Caputo type.

\bibitem{dubo}
F.~Dubois, A-C. Galucio, and N.~Point.
\newblock Introduction \`a la d\'erivation fractionnaire. th\'eorie et
  applications.
\newblock {\em S{\'e}rie des Techniques de l'ing{\'e}nieur}, 2009.

\bibitem{evan}
L.C. Evans.
\newblock An introduction to mathematical optimal control theory.

\bibitem{torr}
G.S.F. Frederico and D.F.M. Torres.
\newblock Fractional conservation laws in optimal control theory.
\newblock {\em Nonlinear Dynam.}, 53(3):215--222, 2008.

\bibitem{torr2}
G.S.F. Frederico and D.F.M. Torres.
\newblock Fractional optimal control in the sense of {C}aputo and the
  fractional {N}oether's theorem.
\newblock {\em Int. Math. Forum}, 3(9-12):479--493, 2008.

\bibitem{galu}
A.~C. Galucio, J.-F. De{\"u}, and F.~Dubois.
\newblock The {$G^\alpha$}-scheme for approximation of fractional derivatives:
  application to the dynamics of dissipative systems.
\newblock {\em J. Vib. Control}, 14(9-10):1597--1605, 2008.

\bibitem{hilf}
E.~Gerolymatou, I.~Vardoulakis, and R.~Hilfer.
\newblock Modelling infiltration by means of a nonlinear fractional diffusion
  model.
\newblock {\em J. Phys. D: Appl. Phys.}, 39:4104, 2006.

\bibitem{lubi}
E.~Hairer, C.~Lubich, and G.~Wanner.
\newblock {\em Geometric numerical integration}, volume~31 of {\em Springer
  Series in Computational Mathematics}.
\newblock Springer-Verlag, Berlin, second edition, 2006.
\newblock Structure-preserving algorithms for ordinary differential equations.

\bibitem{hilf3}
R.~Hilfer.
\newblock Applications of fractional calculus in physics.
\newblock {\em World Scientific, River Edge, New Jersey}, 2000.

\bibitem{hilf2}
R.~Hilfer.
\newblock Fractional calculus and regular variation in thermodynamics.
\newblock In {\em Applications of fractional calculus in physics}, pages
  429--463. World Sci. Publ., River Edge, NJ, 2000.

\bibitem{jeli}
Z.D. Jelicic and N.~Petrovacki.
\newblock Optimality conditions and a solution scheme for fractional optimal
  control problems.
\newblock {\em Struct. Multidiscip. Optim.}, 38(6):571--581, 2009.

\bibitem{kilb}
A.A. Kilbas, H.M. Srivastava, and J.J. Trujillo.
\newblock {\em Theory and applications of fractional differential equations},
  volume 204 of {\em North-Holland Mathematics Studies}.
\newblock Elsevier Science B.V., Amsterdam, 2006.

\bibitem{levy}
P.~L{\'e}vy.
\newblock L'addition des variables al\'eatoires d\'efinies sur une
  circonf\'erence.
\newblock {\em Bull. Soc. Math. France}, 67:1--41, 1939.

\bibitem{liu}
Yanqin Liu and Baogui Xin.
\newblock Numerical solutions of a fractional predator-prey system.
\newblock {\em Adv. Difference Equ.}, pages Art. ID 190475, 11, 2011.

\bibitem{mars}
J.E. Marsden and M.~West.
\newblock Discrete mechanics and variational integrators.
\newblock {\em Acta Numer.}, 10:357--514, 2001.

\bibitem{meer2}
M.M. Meerschaert, H.-P. Scheffler, and C.~Tadjeran.
\newblock Finite difference methods for two-dimensional fractional dispersion
  equation.
\newblock {\em J. Comput. Phys.}, 211(1):249--261, 2006.

\bibitem{meer}
M.M. Meerschaert and C.~Tadjeran.
\newblock Finite difference approximations for fractional advection-dispersion
  flow equations.
\newblock {\em J. Comput. Appl. Math.}, 172(1):65--77, 2004.

\bibitem{oldh}
K.B. Oldham and J.~Spanier.
\newblock {\em The fractional calculus}.
\newblock Academic Press [A subsidiary of Harcourt Brace Jovanovich,
  Publishers], New York-London, 1974.
\newblock Theory and applications of differentiation and integration to
  arbitrary order, With an annotated chronological bibliography by Bertram
  Ross, Mathematics in Science and Engineering, Vol. 111.

\bibitem{podl}
I.~Podlubny.
\newblock {\em Fractional differential equations}, volume 198 of {\em
  Mathematics in Science and Engineering}.
\newblock Academic Press Inc., San Diego, CA, 1999.
\newblock An introduction to fractional derivatives, fractional differential
  equations, to methods of their solution and some of their applications.

\bibitem{riew}
F.~Riewe.
\newblock Mechanics with fractional derivatives.
\newblock {\em Phys. Rev. E (3)}, 55(3, part B):3581--3592, 1997.

\bibitem{samk}
S.G. Samko, A.A. Kilbas, and O.I. Marichev.
\newblock {\em Fractional integrals and derivatives}.
\newblock Gordon and Breach Science Publishers, Yverdon, 1993.
\newblock Theory and applications, Translated from the 1987 Russian original.

\bibitem{stan}
A.~A. Stanislavsky.
\newblock Hamiltonian formalism of fractional systems.
\newblock {\em Eur. Phys. J. B Condens. Matter Phys.}, 49(1):93--101, 2006.

\bibitem{trel}
E.~Tr{\'e}lat.
\newblock {\em Contr\^ole optimal}.
\newblock Math\'ematiques Concr\`etes. Vuibert, Paris, 2005.
\newblock Th{\'e}orie \& applications.

\bibitem{neel}
A.~Zoia, M.-C. N\'eel, and A.~Cortis.
\newblock Continuous-time random-walk model of transport in variably saturated
  heterogeneous porous media.
\newblock {\em Phys. Rev. E}, 81(3):031104, Mar 2010.

\bibitem{neel2}
A.~Zoia, M.-C. N\'eel, and M.~Joelson.
\newblock Mass transport subject to time-dependent flow with nonuniform
  sorption in porous media.
\newblock {\em Phys. Rev. E}, 80:056301, 2009.

\end{thebibliography}

\end{document}